\newcommand{\donothing}[1]{{}}
\numberwithin{equation}{section}
\newtheorem{theorem}{Theorem}
\newtheorem{corollary}{Corollary}
\newtheorem{lemma}{Lemma}
\newtheorem{remark}{Remark}
\newenvironment{proof}{{\bf Proof}.\ }{ \hfill $\square$}
\newcommand{\R}{\mathbb{R}}
\newcommand{\Z}{\mathbb{Z}}
\newcommand{\N}{\mathbb{N}}
\begin{document}

\title{Enstrophy growth in the viscous Burgers equation}

\author{Dmitry Pelinovsky \\
{\small Department of Mathematics, McMaster
University, Hamilton, Ontario, Canada, L8S 4K1}}

\date{\today}
\maketitle

\begin{abstract}
We study bounds on the enstrophy growth for solutions of the viscous Burgers equation
on the unit circle. Using the variational formulation of Lu and Doering, we prove
that the maximizer of the enstrophy's rate of change is sharp in the limit of large enstrophy up to
a numerical constant but does not saturate the Poincar\'e inequality for mean-zero $1$-periodic
functions. Using the dynamical system methods,
we give an asymptotic representation of the maximizer in the limit of large
enstrophy as a viscous shock on the background of a linear rarefactive wave.
This asymptotic construction is used to prove that a larger growth of enstrophy
can be achieved  when the initial data to the viscous Burgers equation saturates the Poincar\'e inequality
up to a numerical constant.

An exact self-similar solution of the Burgers equation is constructed to describe formation
of a metastable viscous shock on the background of a linear rarefactive wave.
When we consider the Burgers equation on an infinite line
subject to the nonzero (shock-type) boundary conditions,
we prove that the maximum enstrophy achieved in the time evolution is scaled as $\mathcal{E}^{3/2}$,
where $\mathcal{E}$ is the large initial enstrophy, whereas the time needed for reaching the maximal
enstrophy is scaled as $\mathcal{E}^{-1/2} \log(\mathcal{E})$. Similar but slower rates are proved
on the unit circle.
\end{abstract}

\section{Introduction}

We consider the initial-value problem for the one-dimensional viscous Burgers equation,
\begin{equation}
\label{Burgers}
\left\{ \begin{array}{l} u_t + 2u u_x = u_{xx}, \;\;\; x \in \mathbb{T}, \; t \in \mathbb{R}_+,\\
u|_{t = 0} = u_0, \phantom{texttext} x \in \mathbb{T},\end{array} \right.
\end{equation}
where $\mathbb{T} = \left[-\frac{1}{2},\frac{1}{2}\right]$ is the unit circle
equipped with the periodic boundary conditions for the real-valued function $u$.
Local well-posedness of the initial-value problem (\ref{Burgers}) holds for
$u_0 \in H^s_{\rm per}(\mathbb{T})$ with $s > -\frac{1}{2}$ \cite{Dix}.
The Burgers equation is used as a toy model in the context of a bigger
problem of how to control existence and regularity of solutions
of the three-dimensional Navier--Stokes equations \cite{Burgers,Lorenz}.
Recent applications of the Burgers equation to the theory of turbulence
can be found in \cite{Ohkitani,Shivamoggi}.

Lu and Doering \cite{LuLu} considered the question of optimal bounds on the enstrophy growth.
The enstrophy for the Burgers equation (\ref{Burgers}) is defined by
\begin{equation}
\label{enstrophy}
E(u) = \frac{1}{2} \int_{\mathbb{T}} u_x^2 dx.
\end{equation}
Integration by parts for a strong local solution
of the Burgers equation (\ref{Burgers}) in $H^3_{\rm per}(\mathbb{T})$ yields
\begin{eqnarray}
\label{rate-of-change-E}
\frac{d E(u)}{d t} = \int_{\mathbb{T}} u_x (u_{xxx} - 2 u u_{xx} - 2 u_x^2 ) dx
= -\int_{\mathbb{T}} ( u_{xx}^2 + u_x^3 ) dx \equiv R(u),
\end{eqnarray}
where $R(u)$ is the rate of change of $E(u)$.

If $u \in {\cal C}_{\rm per}^1(\mathbb{T})$, then there is $\xi \in \mathbb{T}$ such that $u_x(\xi) = 0$.
Using the elementary bound,
$$
u_x^2(x) = \left( \int_{\xi}^x - \int_x^{1 + \xi} \right) u_x u_{xx} dx \quad \Rightarrow \quad
\| u_x \|_{L^{\infty}} \leq \| u_x \|^{1/2}_{L^2} \| u_{xx} \|^{1/2}_{L^2},
$$
and the Young inequality for $a,b \in \R_+$,
$$
a b \leq \frac{a^p}{p \epsilon^p} + \frac{\epsilon^q b^q}{q}, \quad
\frac{1}{p} + \frac{1}{q} = 1, \quad \epsilon > 0,
$$
the rate of change $R(u)$ in (\ref{rate-of-change-E}) can be estimated by
\begin{equation}
\label{bound-on-R}
|R(u)| \leq - \| u_{xx} \|_{L^2}^2 + \| u_x \|_{L^2}^{5/2} \| u_{xx} \|_{L^2}^{1/2} \leq
\frac{3}{4^{4/3}} \| u_x \|_{L^2}^{10/3} \equiv \frac{3}{2} E^{5/3}(u),
\end{equation}
provided that $p = \frac{4}{3}$, $q = 4$, and $\epsilon = \sqrt{2}$.

In the framework of the Burgers equation (\ref{Burgers}),
Lu and Doering \cite{LuLu} showed that the bound
$|R(u)| \leq C E^{5/3}(u)$ on the enstrophy growth is sharp
in the limit of large enstrophy, up to a choice of the numerical constant $C > 0$.
To prove the claim, they considered the maximization problem,
\begin{equation}
\label{max-problem}
\max_{u \in H^2_{\rm per}(\mathbb{T})} R(u) \quad \mbox{\rm subject to} \quad E(u) = {\cal E},
\end{equation}
for a given value of ${\cal E} > 0$. An analytical solution
of the constrained maximization problem (\ref{max-problem}) was
studied in the asymptotic limit of large ${\cal E}$ by using Jacobi's elliptic functions.
We note that the bound (\ref{bound-on-R}) is achieved instantaneously in time
and it may not hold for solutions of the viscous Burgers equation (\ref{Burgers})
for a finite time interval.

Ayala and Protas \cite{Diego} reiterated the same question on the validity of
bound (\ref{bound-on-R}) integrated over a finite time interval. The
energy balance equation for the Burgers equation (\ref{Burgers}) is given by
\begin{equation}
\label{energy}
K(u) = \frac{1}{2} \int_{\mathbb{T}} u^2 dx \quad \Rightarrow \quad
\frac{d K(u)}{d t} = \int_{\mathbb{T}} u (u_{xx} - 2 u u_x) dx = - 2 E(u).
\end{equation}
If bound (\ref{bound-on-R}) is sharp on the time interval $[0,T]$
for some $T > 0$, then integration of the enstrophy equation (\ref{rate-of-change-E}) implies
\begin{equation}
\label{nonlocal-bound}
E^{1/3}(u(T)) - E^{1/3}(u_0) \leq \frac{1}{2} \int_0^T E(u(t)) dt
= \frac{1}{4} \left[ K(u_0) - K(u(T)) \right].
\end{equation}

The Burgers equation (\ref{Burgers}) maps the set of periodic functions with zero mean
to itself. Using the Poincar\'e inequality for periodic functions with zero mean,
\begin{equation}
\label{Poincare-inequality}
K(u_0) \leq \frac{1}{4 \pi^2} E(u_0),
\end{equation}
and neglecting $K(u(T))$ in (\ref{nonlocal-bound}), we can obtain
\begin{equation}
\label{bound-on-E}
E(u(T)) \leq \left( E^{1/3}(u_0) + \frac{1}{16 \pi^2} E(u_0) \right)^3.
\end{equation}
Note that this bound together with the monotonicity of
$K(u(t)) \leq K(u_0)$ implies global well-posedness of the initial-value
problem (\ref{Burgers}) for any $u_0 \in H^1_{\rm per}(\mathbb{T})$.

Using the extended maximization problem for the global
solution of the Burgers equation (\ref{Burgers})
in $H^1_{\rm per}(\mathbb{T})$,
\begin{equation}
\label{max-nonlocal-problem}
\max_{u_0 \in H^1_{\rm per}(\mathbb{T})} E(u(T)) \quad \mbox{\rm subject to} \quad E(u_0) = {\cal E},
\end{equation}
Ayala and Protas \cite{Diego} showed numerically that the integral bound (\ref{bound-on-E}) is not sharp
even in the limit of large ${\cal E}$.

We shall use the notation $A = \mathcal{O}({\cal E}^p)$ as $\mathcal{E} \to \infty$
if there are constants $C_{\pm}$ such that $0 \leq C_- < C_+ < \infty$
and $C_- \mathcal{E}^p \leq A \leq C_+ \mathcal{E}^p$.
Let $T_*$ be the value of $T$, where $\max_{u_0 \in H^1_{\rm per}(\mathbb{T})} E(u(T))$ is
maximal over $T \in \R_+$. The main claims in \cite{Diego} are reproduced
in Table I.

\vspace{0.2cm}

\begin{center}
\begin{tabular}{|l|l|l|l|l|}
  \hline
Initial condition & Time $T_*$ &
Enstrophy at $T_*$ & Energy $K$ at $T_*$ \\
  \hline
A maximizer of (\ref{max-problem}) & ${\cal O}({\cal E}^{-0.6})$ & ${\cal O}({\cal E}^{1.0})$
& ${\cal O}({\cal E}^{0.7})$ \\
  \hline
A maximizer of (\ref{max-nonlocal-problem}) & ${\cal O}({\cal E}^{-0.5})$ & ${\cal O}({\cal E}^{1.5})$
& ${\cal O}({\cal E}^{1.0})$ \\
   \hline
\end{tabular}
\end{center}

{\bf Table I:} Enstrophy growth in the Burgers equation
from the numerical results in \cite{Diego}.

\vspace{0.2cm}

The first line in Table I shows that
the instantaneous maximizer of the problem (\ref{max-problem}) does not saturate
the Poincar\'e inequality (\ref{Poincare-inequality}) and does not lead to large growth
of the enstrophy. On the other hand, the second line in Table I shows
that the bound (\ref{bound-on-E}) is not sharp. The bound (\ref{nonlocal-bound})
could be sharp if $K(u_0) - K(u(T_*)) = {\cal O}({\cal E}^{1/2})$
but the numerical work in \cite{Diego} reported large deviations in
numerical approximations of this quantity,
\begin{equation}\label{suspicious-bound}
K(u_0) - K(u(T_*)) = \mathcal{O}({\cal E}^{0.68 \pm 0.25}),
\end{equation}
which may indicate that the underlying relation may have a logarithmic (or other) correction.

In this paper, we shall study further properties of the analytical solution
of the constrained maximization problem (\ref{max-problem}). We shall use this solution
and its generalizations (see Section 2)
as an initial condition for the Burgers equation (\ref{Burgers}).
In particular, we shall address rigorously the numerical results of \cite{Diego}.
Our main results are summarized in Table II.

\vspace{0.2cm}

\begin{center}
\begin{tabular}{|l|l|l|l|l|}
  \hline
Initial condition & Time $T_*$ &
Enstrophy at $T_*$ & Energy $K$ at $T_*$ \\
  \hline
(\ref{initial-data}) and (\ref{instant-function}) & ${\cal O}({\cal E}^{-2/3} \log(\mathcal{E}))$ &
${\cal O}({\cal E})$ & ${\cal O}({\cal E}^{2/3})$ \\
  \hline
(\ref{initial-data}) and (\ref{particular-function})  &
${\cal O}({\cal E}^{-1/2} \log^{1/2}(\mathcal{E}))$ &
${\cal O}({\cal E}^{3/2} \log^{-3/2}(\mathcal{E}))$ &
${\cal O}({\cal E}\log^{-1}(\mathcal{E}))$ \\
   \hline
\end{tabular}
\end{center}

{\bf Table II:} Enstrophy growth in the Burgers equation from our analytical results.

\vspace{0.2cm}

The analytical results in Table II justify partially the results of numerical approximations
in Table I. We conjecture that the optimal rate is achieved with
\begin{equation}
\label{future-rates}
T_* = {\cal O}({\cal E}^{-1/2}), \quad E(u(T_*))
= {\cal O}({\cal E}^{3/2}), \quad K(u(T_*)) = {\cal O}({\cal E}),
\end{equation}
but we have no proof of this rate at the present time, perhaps, due to technical
limitations of our method (see Remarks \ref{remark-obstacle-1} and \ref{remark-obstacle-2}).
Similarly, we cannot derive an analytical analogue of
the numerical result (\ref{suspicious-bound}) and hence, the sharpness of the nonlocal
bound (\ref{nonlocal-bound}) remains an open question for further studies.

From a technical point of view, using the dynamical system methods,
we prove that the maximizer of the constrained
maximization problem (\ref{max-problem}) does not saturate the Poincar\'e
inequality (\ref{Poincare-inequality}). In the limit of large enstrophy ${\cal E}$,
this maximizer resembles a viscous shock on
the background of a linear rarefactive wave.
If this maximizer is taken as the initial data to the viscous Burgers equation
(\ref{Burgers}), it does not give the largest change of enstrophy,
compared to the case when the initial data saturates the Poincar\'e inequality (\ref{Poincare-inequality}).
On the other hand, if the shock's width is used as an independent parameter
relative to the background intensity of the linear rarefactive wave, the initial data
can saturate the Poincar\'e inequality (\ref{Poincare-inequality})
for large values of $\mathcal{E}$, up to a numerical
constant, and achieve a faster growth of enstrophy in the time evolution
of the viscous Burgers equation.

We note that our construction of the viscous shocks on the background of a linear rarefactive wave
is similar to the diffusive $N$-waves that appear at the intermediate stages
of dynamics of arbitrary initial data in the Burgers equation over an infinite line \cite{Kim}.
However, these metastable states correspond to Gaussian fundamental solutions of the
heat equation in self-similar variables \cite{Wayne1,Wayne2}, whereas
our solutions are obtained on a circle of large but finite period after a scaling transformation.
Our results still rely on the analysis of the Burgers equation over an infinite
line subject to the non-zero (shock-type) boundary conditions, where
viscous shocks are known to be asymptotically stable \cite{Satt,Goodman,Szepessy,Zumbrun}.

The technique of this paper does not use much of the Cole--Hopf transformation \cite{Cole,Hopf},
which is well known to reduce the viscous Burgers equation to the linear heat equation.
The transformation is only used in Sections 5 and 6 to reduce technicalities in
the convergence analysis for dynamics of viscous shocks in bounded and unbounded domains.
At the same time, one can think about other techniques to prove the
conjecture (\ref{future-rates}), which explore the Cole--Hopf transformation in
more details. In particular, this transformation shows that solutions of the viscous Burgers
equation can be studied with the Laplace method for the heat equation. The Laplace method is typically
used to recover solutions of the inviscid Burgers equations from
solutions of the viscous Burgers equation in the limit of vanishing viscosity
(see, e.g., \cite[Chapter 2]{Whitham} or \cite[Chapter 3]{Miller}).
Note that the limit of vanishing viscosity corresponds to the limit of large
enstrophy in the context of our work. There is a promising way to prove the conjecture (\ref{future-rates})
using the Laplace method after the shock of the corresponding inviscid Burgers equation
is formed and this task will be the subject of an independent work \cite{JonGood}.

The paper is organized as follows.  Section 2 presents main results.
Solutions of the constrained maximization problem (\ref{max-problem}) are
characterized in Section 3. Self-similar solutions of the Burgers
equation on the unit circle are considered in Section 4.
Section 5 presents analysis of the Burgers equation on an infinite line.
Evolution of a viscous shock on the background of a linear rarefactive wave
is studied in Section 6. Proofs of the main results for
two different initial data in Table II are given in Sections 7 and 8.


\section{Main results}

We shall first reexamine the solution of the constrained maximization problem (\ref{max-problem}).
Unlike the work of Lu and Doering \cite{LuLu}, we avoid the use of special
functions (the Jacobi elliptic functions) but use dynamical system techniques
to study the limit of large enstrophy ${\cal E}$. As a result,
we obtain the following theorem. Here $\check{H}^2_{\rm per}(\mathbb{T})$ denotes
the restriction of $H^2_{\rm per}(\mathbb{T})$ to odd functions
and $A = \mathcal{O}_{L^{\infty}}(B)$ as $B \to \infty$ indicates that
$\| A \|_{L^{\infty}} = \mathcal{O}(B)$.

\vspace{1cm}

\begin{theorem}
For sufficiently large $\mathcal{E}$,
there exists a unique solution $u_* \in \check{H}^2_{\rm per}(\mathbb{T})$
of the constrained maximization problem (\ref{max-problem}) with $u_*'(0) < 0$
satisfying
\begin{equation}
\label{profile-u-star}
u_*(x) = 4 k (2x - \tanh(kx)) + \mathcal{O}_{L^{\infty}}(k^2 e^{-k}), \quad \mbox{\rm as} \quad k \to \infty,
\end{equation}
where $k$ determines the leading order expansions,
\begin{eqnarray}
\label{expansion-K-1}
K(u_*) & = & \frac{8}{3} k^2 + \mathcal{O}(k), \\
\label{expansion-K-2}
E(u_*) & = & \frac{32}{3} k^3 + \mathcal{O}(k^2), \\
\label{expansion-K-3}
R(u_*) & = & \frac{256}{5} k^5 + \mathcal{O}(k^4).
\end{eqnarray}
\label{theorem-local-bound}
\end{theorem}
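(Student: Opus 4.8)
The plan is to characterize the maximizer through its Euler--Lagrange equation and then to construct it by a phase-plane analysis in the singular limit $k\to\infty$. First I would introduce a Lagrange multiplier $\lambda$ for the constraint $E(u)=\mathcal{E}$ and write the stationarity condition $R'(u)=\lambda E'(u)$ for the problem (\ref{max-problem}). Computing the two variations and integrating by parts (periodicity kills the boundary terms) gives the fourth-order equation $2u_{xxxx}-6u_xu_{xx}-\lambda u_{xx}=0$; integrating once produces a second-order equation $2v''-3v^2-\lambda v=C$ for $v:=u_x$, with an integration constant $C$. The restriction to $\check{H}^2_{\rm per}(\mathbb{T})$ means $u$ is odd, hence $v$ is even; the admissible profiles must in addition be $1$-periodic with zero mean $\int_{\mathbb{T}}v\,dx=0$ (so that $u=\int v$ is itself $1$-periodic) and satisfy $\tfrac12\int_{\mathbb{T}}v^2\,dx=\mathcal{E}$. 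These three requirements supply exactly the conditions needed to fix $\lambda$, $C$ and to select one orbit.

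The equation for $v$ is conservative, $v''=\tfrac12(3v^2+\lambda v+C)=-U'(v)$ with a cubic potential $U$, so I would pass to the planar system $v'=w$, $w'=\tfrac12(3v^2+\lambda v+C)$ with first integral $H=\tfrac12 w^2+U(v)$. For the relevant range of $(\lambda,C)$ the potential has one center and one saddle, and the profile of interest is a periodic orbit lying just inside the homoclinic loop based at the saddle. A direct substitution shows that $v_0(x)=8k-4k^2\sech^2(kx)$ solves the equation exactly on the line for $\lambda_0=8k^2-48k$ and $C_0=192k^2-64k^3$, with the saddle at $v=8k$ and hyperbolic rate $2k$; this is the leading-order single-shock-on-a-rarefactive-background profile, whose antiderivative is $u_0(x)=4k(2x-\tanh(kx))$. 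By evenness the sought solution has turning points at $x=0$ (the dip $v^\ast\approx 8k-4k^2<0$, which yields $u_*'(0)<0$) and at $x=\tfrac12$, so the half-period constraint reads $\tfrac12=\int_{v^\ast}^{v_{\max}}dv/\sqrt{2(H-U(v))}$.

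Next I would prove existence and uniqueness of such an orbit with period exactly $1$ and zero mean for each large $k$. Because the orbit approaches the homoclinic loop, the half-period integral diverges logarithmically as $v_{\max}\to 8k$; imposing period $1$ together with $\int_{\mathbb{T}}v\,dx=0$ determines $(\lambda,C,H)$ as corrections of $(\lambda_0,C_0,U(8k))$, which I would solve by the implicit function theorem after desingularizing the integral near the saddle. The hyperbolicity rate $2k$ forces the periodic orbit to be exponentially close to $v_0$ on $[-\tfrac12,\tfrac12]$: the gap between the cut orbit and the saddle at the boundary is $\mathcal{O}(e^{-2k\cdot\frac12})=\mathcal{O}(e^{-k})$, and carrying the $k^2$ amplitude of the profile through this estimate upgrades it to $\|v-v_0\|_{L^\infty}=\mathcal{O}(k^2 e^{-k})$. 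Integrating in $x$ over the bounded domain preserves the order and yields (\ref{profile-u-star}).

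Finally, the leading expansions (\ref{expansion-K-1})--(\ref{expansion-K-3}) follow by substituting $v_0$ and using $\int_{-1/2}^{1/2}\sech^{2n}(kx)\,dx=\tfrac{c_n}{k}\,(1+\mathcal{O}(e^{-k}))$, which give $E=\tfrac{32}{3}k^3+\mathcal{O}(k^2)$, $K=\tfrac83 k^2+\mathcal{O}(k)$ and $R=\tfrac{256}{5}k^5+\mathcal{O}(k^4)$; the monotone relation $E(u_*)=\mathcal{E}$ then defines the correspondence $k\leftrightarrow\mathcal{E}$ with $k\to\infty$ as $\mathcal{E}\to\infty$. I expect two places to carry the real difficulty. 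The first is the rigorous construction near the homoclinic loop---controlling the logarithmically divergent period integral and proving that the remainder is genuinely of order $k^2 e^{-k}$ (this is where a stable/unstable-manifold gluing argument, or the Cole--Hopf reduction used later in the paper, is cleanest). The second is showing that the constructed single-pulse critical point is the \emph{global} maximizer of $R$ at fixed $E$, unique under the normalization $u_*'(0)<0$: one must rule out multi-pulse competitors (orbits making several excursions per period) and verify the second-variation and constraint-qualification conditions, so that the orbit found is a true maximum rather than merely a critical point.
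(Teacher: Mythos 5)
Your proposal is correct and follows essentially the same route as the paper: Lagrange multipliers reduce the problem to the second-order conservative equation for $v=u_x$ (your integration constant $C$ playing the role of the paper's multiplier $\mu$ for the zero-mean constraint), the maximizer is the $1$-periodic orbit just inside the homoclinic loop at the saddle $v=8k$, exponential closeness to $8k-4k^2\,{\rm sech}^2(kx)$ gives the $\mathcal{O}(k^2e^{-k})$ profile error, and the expansions of $K$, $E$, $R$ follow by direct substitution; your values $\lambda_0=8k^2-48k$, $C_0=192k^2-64k^3$ and the hyperbolic rate $2k$ all check out against the paper's $a_+\approx 8k$, $\lambda\approx 4k^2(1-9\mathcal{E}/8k^4)^{1/2}$ normalization. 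The only difference is cosmetic: where you propose a period-integral/implicit-function-theorem and manifold-gluing argument for the $\mathcal{O}(e^{-k})$ closeness of the periodic orbit to the soliton, the paper rescales to $y''-4y+6y^2=0$ and runs a contraction-mapping argument around a $k$-periodized ${\rm sech}^2$ in the space of even functions; and the global-maximality/uniqueness issue you rightly flag is dispatched in the paper only by the terse observation that the critical point with $u_*'(0)<0$ is unique and $R$ is bounded above by (\ref{bound-on-R}).
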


\begin{corollary}
\label{corollary-local-bound}
When $k$ is expressed from (\ref{expansion-K-2}) in terms of $\mathcal{E} = E(u_*)$, we obtain
\begin{equation}
\label{asympt-expansions}
\left. \begin{array}{l}
K(u_*) = \frac{1}{6^{1/3}} \mathcal{E}^{2/3} + \mathcal{O}(\mathcal{E}^{1/3}), \\
R(u_*) = \frac{3^{5/3}}{5 \cdot 2^{1/3}} \mathcal{E}^{5/3} + \mathcal{O}(\mathcal{E}^{4/3}),
\end{array} \right\} \quad \mbox{\rm as} \quad \mathcal{E} \to \infty.
\end{equation}
\end{corollary}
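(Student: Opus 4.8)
The plan is to treat the leading-order expansions (\ref{expansion-K-1})--(\ref{expansion-K-3}) of Theorem \ref{theorem-local-bound} as given and perform a straightforward asymptotic inversion: solve (\ref{expansion-K-2}) for $k$ in terms of $\mathcal{E} = E(u_*)$, then substitute the resulting expansion into (\ref{expansion-K-1}) and (\ref{expansion-K-3}). Because the theorem controls all remainders in the regime $k \to \infty$, I would first record the equivalence $k \to \infty \Leftrightarrow \mathcal{E} \to \infty$, which follows from writing $\mathcal{E} = \frac{32}{3} k^3 (1 + \mathcal{O}(k^{-1}))$; this lets me rewrite every $\mathcal{O}(k^{-j})$ error as an $\mathcal{O}(\mathcal{E}^{-j/3})$ error at each stage.

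Inverting (\ref{expansion-K-2}) by taking cube roots gives
\[
k = \left( \frac{3}{32} \right)^{1/3} \mathcal{E}^{1/3} \bigl( 1 + \mathcal{O}(k^{-1}) \bigr)^{1/3} = \left( \frac{3}{32} \right)^{1/3} \mathcal{E}^{1/3} + \mathcal{O}(1),
\]
where the second equality uses $\bigl(1 + \mathcal{O}(k^{-1})\bigr)^{1/3} = 1 + \mathcal{O}(k^{-1})$ together with $k^{-1} = \mathcal{O}(\mathcal{E}^{-1/3})$. Substituting into (\ref{expansion-K-1}) and expanding the square gives $k^2 = (3/32)^{2/3} \mathcal{E}^{2/3} + \mathcal{O}(\mathcal{E}^{1/3})$; multiplying by $\frac{8}{3}$ and simplifying the powers of $2$ and $3$ produces the coefficient $\frac{8}{3}(3/32)^{2/3} = 6^{-1/3}$, while the $\mathcal{O}(k)$ remainder of (\ref{expansion-K-1}) and the expansion error are both of order $\mathcal{O}(\mathcal{E}^{1/3})$. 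Likewise, substituting into (\ref{expansion-K-3}) and expanding the fifth power gives $k^5 = (3/32)^{5/3} \mathcal{E}^{5/3} + \mathcal{O}(\mathcal{E}^{4/3})$, and the coefficient $\frac{256}{5}(3/32)^{5/3}$ simplifies to $\frac{3^{5/3}}{5 \cdot 2^{1/3}}$, with remainder $\mathcal{O}(\mathcal{E}^{4/3})$.

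The only point requiring care -- and hence the main, though minor, obstacle -- is the consistent bookkeeping of the error terms through the nonlinear inversion: one must verify that the $\mathcal{O}(k^2)$ remainder in (\ref{expansion-K-2}) degrades the leading term of $k$ by only $\mathcal{O}(1)$, and that this $\mathcal{O}(1)$ uncertainty in $k$ then produces precisely the claimed errors $\mathcal{O}(\mathcal{E}^{1/3})$ and $\mathcal{O}(\mathcal{E}^{4/3})$ after raising to the second and fifth powers, respectively. No analysis beyond Theorem \ref{theorem-local-bound} is required; the corollary is a purely algebraic reparametrization of the theorem's conclusions from the variable $k$ to the variable $\mathcal{E}$.
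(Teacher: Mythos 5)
Your proposal is correct and takes essentially the same route as the paper, which inverts (\ref{expansion-K-2}) to obtain $k = \left(\frac{3}{32}\mathcal{E}\right)^{1/3} + \mathcal{O}(1)$ (see (\ref{approx-solution})) and substitutes into (\ref{expansion-K-1}) and (\ref{expansion-K-3}). The coefficient simplifications $\frac{8}{3}\left(\frac{3}{32}\right)^{2/3} = 6^{-1/3}$ and $\frac{256}{5}\left(\frac{3}{32}\right)^{5/3} = \frac{3^{5/3}}{5\cdot 2^{1/3}}$ and the error bookkeeping through the cube-root inversion all check out.
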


\begin{remark}
\label{remark-local-bound}
Corollary \ref{theorem-local-bound} improves the earlier claims in \cite{Diego} and in \cite{LuLu} based on numerical
and asymptotic computations, respectively. It shows that the Poincare inequality (\ref{Poincare-inequality})
is not saturated by the solution of the constrained maximization problem (\ref{max-problem}), whereas
the bound $|R(u)| \leq C E^{5/3}(u)$ is sharp up to a choice of the numerical constant $C > 0$ with
$$
C = \frac{3^{5/3}}{5 \cdot 2^{1/3}} < \frac{1}{2}.
$$
\end{remark}

We shall now consider the time evolution of the Cauchy problem (\ref{Burgers}) with the initial data
\begin{equation}
\label{initial-data}
u_0(x) = 4k (2x - f(x)), \quad x \in \mathbb{T},
\end{equation}
where $k > 0$ is a free parameter and $f : \mathbb{T} \to \R$ is a fixed function satisfying
\begin{equation}
\label{initial-function}
f \in \mathcal{C}^1(\mathbb{T}) : \quad f(-x) = -f(x), \;\; f\left(\frac{1}{2}\right) = 1.
\end{equation}

The maximizer of Theorem \ref{theorem-local-bound} is represented by (\ref{profile-u-star}).
Neglecting the exponentially small terms as $k \to \infty$,
this maximizer can be written in the form (\ref{initial-data}) with
\begin{equation}
\label{instant-function}
f(x) = \frac{\tanh(kx)}{\tanh(k/2)}.
\end{equation}
We say that the initial data (\ref{initial-data}) with (\ref{instant-function})
represents a shock on the background of a linear rarefactive wave, where the width of the shock
is inverse proportional to large parameter $k$.

The maximizer of Theorem \ref{theorem-local-bound} does not saturate
the Poincar\'e inequality (\ref{Poincare-inequality}) in the limit
$k \to \infty$ (Remark \ref{remark-local-bound}). To allow more flexibility,
we can take the initial data (\ref{initial-data}) in the form,
\begin{equation}
\label{particular-function}
f(x) = \frac{\tanh(l x)}{\tanh(l/2)},
\end{equation}
where parameter $l > 0$ may be independent of the large parameter $k$.
Figure \ref{figure-Data} shows both functions (\ref{instant-function})
and (\ref{particular-function}) in the initial data (\ref{initial-data})
by dashed and solid lines, respectively. If $k = 20$ and $l = 5$,
the shock in (\ref{particular-function}) is much smoother than the shock in
(\ref{instant-function}).

\begin{figure}
\begin{center}
  \includegraphics[width=0.65\textwidth]{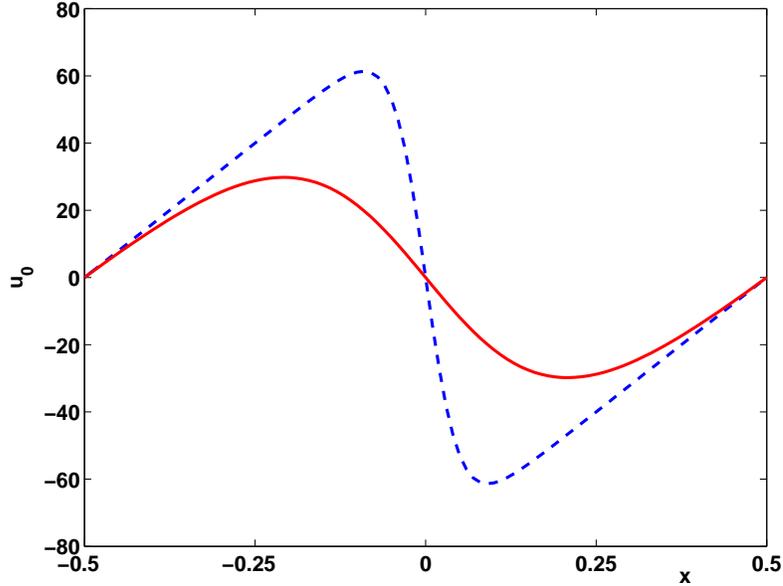}
\end{center}
\caption{Initial data (\ref{initial-data}) with (\ref{instant-function}) (dotted)
or (\ref{particular-function}) (solid) for $k = 20$ and $l = 5$.}
\label{figure-Data}
\end{figure}

If $u_0$ is given by (\ref{initial-data}) with (\ref{particular-function}), then
\begin{equation}
\label{asympt-initial-data}
K(u_0) = k^2 \tilde{K}(l), \quad E(u_0) = k^2 \tilde{E}(l),
\end{equation}
where
\begin{eqnarray*}
\tilde{K}(l) & = & \frac{32}{3} + \frac{8}{\sinh^2(l/2)} - \frac{16 \cosh(l/2)}{l \sinh(l/2)}
\left[ 1 + \frac{l}{2} + 2 \log(1 + e^{-l}) - \frac{2}{l} \int_0^l \log(1 + e^{-z}) dz \right],\\
\tilde{E}(l) & = & \frac{32 l (\cosh(l) + 2)}{3 \sinh(l)} - 32.
\end{eqnarray*}

If $l = k$, then these expansions yield (\ref{expansion-K-1}) and
(\ref{expansion-K-2}) up to the error terms. In this case, if $\mathcal{E} = E(u_0)$ is
fixed, then
\begin{equation}
\label{case-I}
k = \mathcal{O}(\mathcal{E}^{1/3}), \quad K(u_0) = \mathcal{O}(\mathcal{E}^{2/3}), \quad \mbox{\rm as}
\quad \mathcal{E} \to \infty.
\end{equation}

The function $F(l) = \tilde{K}(l)/\tilde{E}(l)$ is plotted on Figure \ref{figure-Poincare} (left).
We can see that there is a maximum of the function $F$ at $l = l_0 \approx 3.0$,
where the maximum is
at
$$
F(l_0) \approx 0.025297 < \frac{1}{4 \pi^2} \approx 0.025330.
$$
If $l$ is fixed independently of $k$ and if $\mathcal{E} = E(u_0)$ is fixed, then
\begin{equation}
\label{case-II}
k = \mathcal{O}(\mathcal{E}^{1/2}), \quad K(u_0) = \mathcal{O}(\mathcal{E}), \quad \mbox{\rm as}
\quad \mathcal{E} \to \infty.
\end{equation}
This shows that the initial data (\ref{initial-data}) with (\ref{particular-function})
saturates the Poincar\'e inequality (\ref{Poincare-inequality}) in the limit $k \to \infty$
up to a numerical constant. Note that the value of the constant prefactor
$F(l_0)$ for $l_0 \approx 3.0$ is $99.9\%$ of the Poincare constant, compared to
the numerical computations in \cite{Diego}, where this prefactor was
found from solutions of the extended maximization problem (\ref{max-nonlocal-problem})
to be $97\%$ of the Poincare constant.

To apply our method, we shall consider a slow (logarithmic) growth of
the parameter $l$ in the limit $k \to \infty$, which yields rates slower
than rates (\ref{case-II}). In particular, we shall use the following elementary result.

\begin{lemma}
Fix $\Delta > 0$ and let $l := (1 + \Delta) \log(k)$. Then, we have
\begin{equation}
\label{case-III}
k = \mathcal{O}(\mathcal{E}^{1/2} \log^{-1/2}(\mathcal{E})), \quad
K(u_0) = \mathcal{O}(\mathcal{E} \log^{-1}(\mathcal{E})), \quad \mbox{\rm as}
\quad \mathcal{E} \to \infty.
\end{equation}
\label{lemma-exp-asymptotics}
\end{lemma}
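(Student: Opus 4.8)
The plan is to treat Lemma \ref{lemma-exp-asymptotics} as a purely asymptotic inversion problem for the two explicit functions $\tilde{K}(l)$ and $\tilde{E}(l)$ displayed just before the statement. The first step is to record their behaviour as $l \to \infty$, which is forced because the prescribed choice $l = (1+\Delta)\log k$ tends to infinity with $k$. For $\tilde{E}$ I would write $\frac{\cosh(l)+2}{\sinh(l)} = \coth(l) + 2/\sinh(l) = 1 + O(e^{-l})$, so that
\begin{equation*}
\tilde{E}(l) = \frac{32}{3} l \left( 1 + O(e^{-l}) \right) - 32 = \frac{32}{3} l \left( 1 + o(1) \right), \quad l \to \infty.
\end{equation*}
For $\tilde{K}$ I would use $8/\sinh^2(l/2) = O(e^{-l})$, $\cosh(l/2)/\sinh(l/2) = \coth(l/2) = 1 + O(e^{-l})$, $\log(1+e^{-l}) = O(e^{-l})$, and the convergence of $\int_0^l \log(1+e^{-z})\,dz$ to $\int_0^\infty \log(1+e^{-z})\,dz = \pi^2/12$, so that the last bracketed term contributes $\frac{2}{l}\int_0^l \log(1+e^{-z})\,dz = O(1/l)$. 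Collecting the surviving contributions yields
\begin{equation*}
\tilde{K}(l) = \frac{32}{3} - \frac{16\coth(l/2)}{l}\left( 1 + \frac{l}{2} + O(1/l) \right) + O(e^{-l}) = \frac{8}{3} + O(1/l), \quad l \to \infty,
\end{equation*}
consistent with the leading coefficient $8/3$ in (\ref{expansion-K-1}).

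The second step substitutes $l = (1+\Delta)\log k$ into (\ref{asympt-initial-data}), where the asymptotics above are now in force, giving
\begin{equation*}
\mathcal{E} = E(u_0) = k^2 \tilde{E}(l) = \frac{32(1+\Delta)}{3}\, k^2 \log k \,\left( 1 + o(1) \right), \quad k \to \infty.
\end{equation*}
The third and decisive step is to invert this transcendental relation. Since $k \mapsto k^2 \log k$ is strictly increasing for large $k$, the map $k \mapsto \mathcal{E}$ is invertible in that range and $\mathcal{E} \to \infty$ as $k \to \infty$. Taking logarithms gives $\log \mathcal{E} = 2\log k + \log\log k + O(1)$, whence $\log k = \frac{1}{2}\log\mathcal{E}\,(1+o(1))$; feeding this back into the displayed relation produces
\begin{equation*}
k^2 = \frac{3\,\mathcal{E}}{16(1+\Delta)\log\mathcal{E}}\,\left( 1 + o(1) \right), \quad \mathcal{E} \to \infty,
\end{equation*}
which is precisely $k = \mathcal{O}(\mathcal{E}^{1/2}\log^{-1/2}(\mathcal{E}))$ in the two-sided sense of the paper's $\mathcal{O}$ notation. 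Finally, combining $K(u_0) = k^2 \tilde{K}(l)$ with $\tilde{K}(l) = \frac{8}{3}+o(1)$ and the estimate just obtained for $k^2$ gives $K(u_0) = \frac{8}{3} k^2 (1+o(1)) = \mathcal{O}(\mathcal{E}\log^{-1}(\mathcal{E}))$, completing the argument.

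I expect the main obstacle to be the inversion carried out in the third step: one must cleanly separate the dominant term $2\log k$ from the subdominant $\log\log k$ when solving $\log\mathcal{E} = 2\log k + \log\log k + O(1)$, and then propagate two-sided bounds through $k^2 = \frac{3\mathcal{E}}{32(1+\Delta)\log k}(1+o(1))$ so that the final statement holds with explicit constants $C_\pm$, as demanded by the definition of $\mathcal{O}(\cdot)$. By contrast, the asymptotics in the first step are routine once the convergence of $\int_0^\infty \log(1+e^{-z})\,dz$ is noted, since every neglected contribution there is either exponentially small in $l$ or of order $O(1/l) = O(1/\log k)$, and hence harmless after multiplication by $k^2$.
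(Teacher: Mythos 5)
Your proposal is correct and follows essentially the same route as the paper: extract the leading-order asymptotics $\tilde{E}(l) = \tfrac{32}{3}l(1+o(1))$, $\tilde{K}(l) = \tfrac{8}{3}+O(1/l)$, substitute $l=(1+\Delta)\log k$, and invert the resulting relation $\mathcal{E}\sim C k^2\log k$. The only (harmless) difference is in the inversion step, where you bootstrap via $\log\mathcal{E}=2\log k+\log\log k+O(1)$ instead of the paper's substitution $x=zu/\log z$ followed by the implicit function theorem near $u=1$; both yield $k^2=\frac{3\mathcal{E}}{16(1+\Delta)\log\mathcal{E}}(1+o(1))$ and hence the claimed two-sided rates.
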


\begin{proof}
As $k,l \to \infty$, the leading-order expression for $K(u_0)$ and $E(u_0)$ are given by
\begin{equation}
\label{expansions-K-E}
K(u_0) = \frac{8}{3} k^2 + \mathcal{O}\left(\frac{k^2}{l}\right), \quad
E(u_0) = \frac{32}{3} k^2 l + \mathcal{O}(k^2).
\end{equation}
With the choice of $l := (1 + \Delta) \log(k)$, we are to solve $E(u_0) = \mathcal{E}$,
which is equivalent to the implicit equation,
\begin{equation}
\label{implicit-equation}
z = x \log(x) + \mathcal{O}(x), \quad z := \frac{3 \mathcal{E}}{16 (1 + \Delta)}, \quad x := k^2.
\end{equation}
We look at the asymptotic limit $z \to \infty$. Setting
$$
x := \frac{z u}{\log(z)},
$$
we rewrite the implicit equation (\ref{implicit-equation}) as the root finding problem,
\begin{equation}
\label{root-finding}
f(u) := u \left( 1 - \frac{\log(\log(z))}{\log(z)} + \frac{\log(u)}{\log(z)} +
\mathcal{O}\left(\frac{1}{\log(z)}\right) \right) - 1 = 0
\end{equation}
We note that $f(1) = \mathcal{O}\left(\frac{\log(\log(z))}{\log(z)}\right) \to 0$ as $z \to \infty$
and $f'(1) = 1 + \mathcal{O}\left(\frac{\log(\log(z))}{\log(z)}\right) \neq 0$ as $z \to \infty$.
By the implicit function theorem, there is a unique root in the neighborhood of $u = 1$
such that $u = 1 + \mathcal{O}\left(\frac{\log(\log(z))}{\log(z)}\right)$ as $z \to \infty$.
The assertion (\ref{case-III}) holds by (\ref{expansions-K-E}) and (\ref{implicit-equation}).
\end{proof}

\begin{figure}
\begin{center}
  \includegraphics[width=0.48\textwidth]{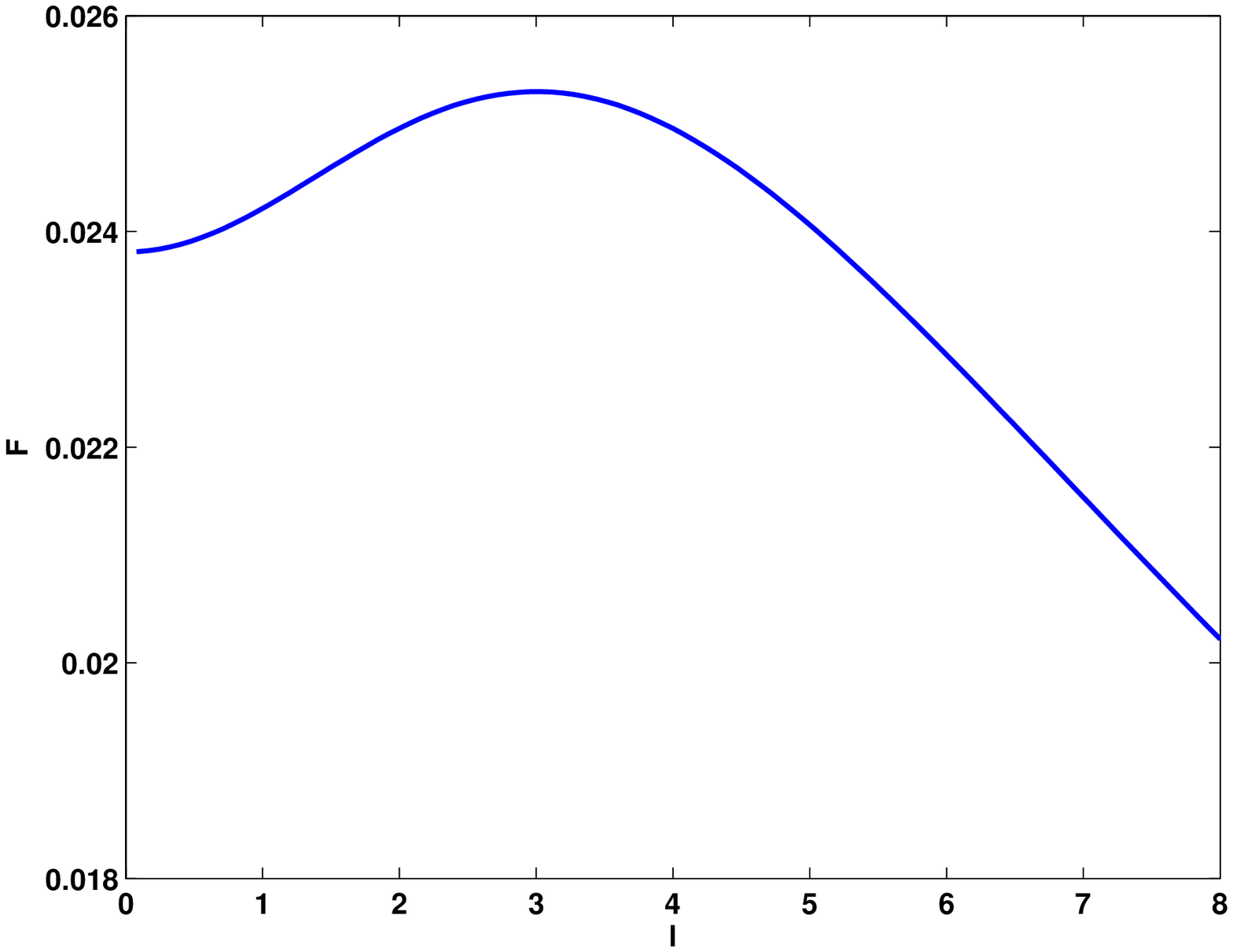}
   \includegraphics[width=0.48\textwidth]{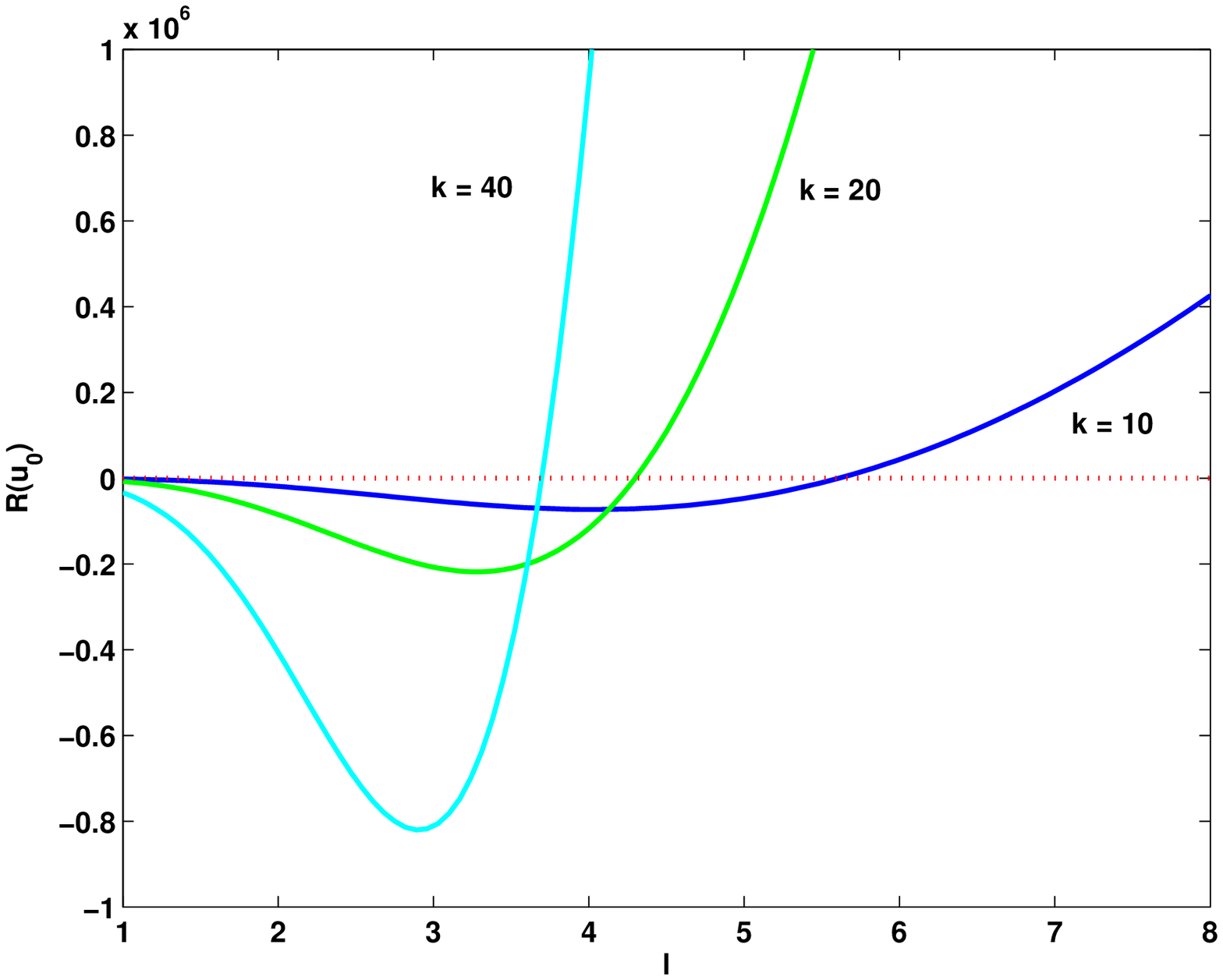}
\end{center}
\caption{Function $F(l) = \tilde{K}(l)/\tilde{E}(l)$ versus $l$ (left)
and function $R(u_0)$ versus $l$ for different values of $k$ (right).}
\label{figure-Poincare}
\end{figure}

If $u_0$ is given by (\ref{initial-data}) and (\ref{particular-function}), then
\begin{eqnarray*}
R(u_0) & = & 64 k^3 \left[ 16 - \frac{12 l}{\tanh^2(l/2)} \int_0^{l/2} {\rm sech}^4(x) dx +
\frac{2 l^2}{\tanh^3(l/2)} \int_0^{l/2} {\rm sech}^6(x) dx \right] \\
& \phantom{t} & \phantom{texttext}
- \frac{128 k^2 l^3}{\tanh^2(l/2)} \int_0^{l/2} \sinh^2(x) {\rm sech}^6(x) dx.
\end{eqnarray*}
The dependence of $R(u_0)$ versus $l$ for different values of $k$ is shown
on Figure \ref{figure-Poincare} (right). Since $R(u_0) > 0$ for large values of $k$
and $l$, the enstrophy $E(u)$ grows initially for $t > 0$.

We shall construct a solution of the Burgers equation (\ref{Burgers})
starting with the initial data (\ref{initial-data})--(\ref{initial-function}).
We prove that this solution displays dynamics consisting of two phases. In the first
phase, a metastable viscous shock is formed from the function $-4 k f(x)$.
In the second phase, a rarefactive wave associated with the linear function
$8 k x$ decays to zero. We compute the growth of enstrophy in two cases:
when $l = \mathcal{O}(k)$ and the scaling law (\ref{case-I}) holds
and when $l = \mathcal{O}(\log(k))$ and the scaling law (\ref{case-III}) holds.
The following theorem gives the main result of this paper.

\begin{theorem}
Consider the initial-value problem (\ref{Burgers}) with
initial data in (\ref{initial-data}) and (\ref{particular-function}).
Let $\mathcal{E} = E(u_0)$ be the initial enstrophy.
There exists $T_* > 0$ such that the enstrophy $E(u)$ achieves its maximum at
$u_* = u(\cdot,T_*)$. If $l = \mathcal{O}(k)$ as $k \to \infty$,
then
\begin{equation}
T_* = \mathcal{O}(\mathcal{E}^{-2/3} \log(\mathcal{E})), \quad
E(u_*) = \mathcal{O}(\mathcal{E}), \quad
K(u_*) = \mathcal{O}(\mathcal{E}^{2/3}), \quad \mbox{\rm as} \quad \mathcal{E} \to \infty,
\label{arg-max}
\end{equation}
whereas if $l = \mathcal{O}(\log(k))$ as $k \to \infty$, then
\begin{equation}
T_* = \mathcal{O}(\mathcal{E}^{-1/2} \log^{1/2}(\mathcal{E})), \quad
E(u_*) = \mathcal{O}(\mathcal{E}^{3/2}\log^{-3/2}(\mathcal{E})), \quad
K(u_*) = \mathcal{O}(\mathcal{E} \log^{-1}(\mathcal{E})),
\label{arg-max-theorem}
\end{equation}
as $\mathcal{E} \to \infty$.
\label{theorem-time}
\end{theorem}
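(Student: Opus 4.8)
The plan is to reduce (\ref{Burgers}) with the datum (\ref{initial-data}), (\ref{particular-function}) to a viscous shock of unit amplitude on a long circle by the scaling symmetry, to follow the two-phase dynamics announced above (formation of the shock, then decay of the rarefactive background), and finally to re-express every $k$- and $l$-dependent quantity through $\mathcal{E}$ by means of (\ref{expansions-K-E}), (\ref{case-I}), and Lemma \ref{lemma-exp-asymptotics}. Before any rates, I would dispose of the existence of $T_*$: the datum (\ref{initial-data}) is odd, hence mean-zero, and (\ref{Burgers}) preserves the zero mean, so by (\ref{energy}) the energy $K(u(t))$ strictly decreases and $E(u(t)) \to 0$ as $t \to \infty$; since $E(u_0) > 0$ and $\tfrac{d}{dt}E(u)\big|_{t=0} = R(u_0) > 0$ (the positivity of $R(u_0)$ was noted after Figure \ref{figure-Poincare}), the smooth map $t \mapsto E(u(t))$ attains a positive maximum at some finite $T_* > 0$. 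The content of the theorem is then purely the orders of $T_*$, $E(u_*)$, and $K(u_*)$.

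For the reduction I would exploit the exact symmetry $u(x,t) = k\,U(kx, k^2 t)$ of (\ref{Burgers}), which maps $\mathbb{T}$ to the circle of period $k$ and sends (\ref{initial-data}), (\ref{particular-function}) to $U_0(\xi) = 8\xi/k - 4\tanh((l/k)\xi)/\tanh(l/2)$: to leading order a $\tanh$ shock of amplitude $4$ and width $k/l$ superposed on a linear rarefaction of slope $8/k$ filling the whole domain. Under this scaling $E = k^3 E_U$ and $K = k\,K_U$ with $E_U = \tfrac12\int U_\xi^2\,d\xi$, $K_U = \tfrac12\int U^2\,d\xi$, and the stationary shock $U_s(\xi) = -4\tanh(4\xi)$ of the rescaled equation has $E_U(U_s) = \tfrac{128}{3}$. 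Because $U$ keeps an $\mathcal{O}(1)$ amplitude over a domain of length $k$ on the whole relevant window, $K_U = \mathcal{O}(k)$, so a fully formed shock carries enstrophy $\mathcal{O}(k^3)$ while the energy stays $\mathcal{O}(k^2)$; this already fixes $E(u_*) = \mathcal{O}(k^3)$ and $K(u_*) = \mathcal{O}(k^2)$ in both regimes. I would then import the infinite-line theory (Section 5) and the exact self-similar rarefaction (Section 4): in the first phase the rescaled datum relaxes toward $U_s$, so $E_U$ climbs to $\tfrac{128}{3}$; in the second phase the rarefaction decays, periodicity drains the shock amplitude with it, and $E_U$ falls. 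The maximum of $E(u)$ is taken at the crossover of the two phases, where the solution is closest to the (quasi-)stationary shock and $R(u) = 0$ (a steady profile has $dE/dt = 0$).

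The two regimes are distinguished by where this crossover sits relative to the fast scale of shock relaxation, $\mathcal{O}(1/(kl))$, and the slow scale of rarefactive drainage, $\mathcal{O}(1/k)$. When $l = \mathcal{O}(k)$ the rescaled shock is already nearly stationary; its approach to $U_s$ is exponentially fast in the rescaled time (the linearized shock operator has a spectral gap in exponentially weighted spaces, \cite{Satt,Goodman,Szepessy,Zumbrun}), and balancing this against the $\mathcal{O}(k^{-1})$-slow drainage places the peak at $T_* = \mathcal{O}(k^{-2}\log k)$; substituting $\mathcal{E} = \mathcal{O}(k^3)$ and $k = \mathcal{O}(\mathcal{E}^{1/3})$ from (\ref{case-I}) converts $E(u_*), K(u_*), T_*$ into (\ref{arg-max}). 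When $l = \mathcal{O}(\log k)$ the rescaled shock is wide and relaxes slowly, so the enstrophy is still climbing when the rarefactive drainage sets in, and the crossover sits at the rarefactive scale $T_* = \mathcal{O}(k^{-1})$; substituting $E(u_0) = \tfrac{32}{3}k^2 l(1+o(1))$ from (\ref{expansions-K-E}) together with the inversion $k = \mathcal{O}(\mathcal{E}^{1/2}\log^{-1/2}\mathcal{E})$ of Lemma \ref{lemma-exp-asymptotics} reintroduces $\log(\mathcal{E})$ and yields (\ref{arg-max-theorem}). As a consistency check in each case, the energy identity $K(u_0) - K(u_*) = 2\int_0^{T_*} E(u)\,dt$ coming from (\ref{energy}) reproduces the same order for $T_*$.

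The hard part is to make the two-phase decomposition rigorous: to control the coupling between the sharpening shock and the decaying rarefaction uniformly on the long interval $[0,T_*]$ and to show that $E(u)$ has a genuine interior maximum of the stated order rather than a long plateau. I expect this to rest on the Cole--Hopf transform \cite{Cole,Hopf}, which turns (\ref{Burgers}) into the heat equation and renders the superposition of a shock and a linear ramp explicit, together with the asymptotic stability of viscous shocks on the line \cite{Satt,Goodman,Szepessy,Zumbrun} to keep the solution within $\mathcal{O}_{L^{\infty}}$ of the shock-plus-rarefaction ansatz with quantitatively small errors (this is what Sections 5 and 6 are for, and Sections 7 and 8 would carry out the two cases). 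The genuinely delicate point---and the one I expect to consume most of the work---is the precise order of $T_*$ with its logarithmic factors, which reflect the diffusive relaxation rate toward the self-similar profile, the same mechanism responsible for metastable diffusive $N$-waves \cite{Kim,Wayne1,Wayne2}; capturing it requires following the slowly varying tails of the shock against the rarefaction, not merely the leading $\tanh$ core.
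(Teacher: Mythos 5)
Your outline identifies the correct architecture (rescale to a unit-amplitude shock on a long domain, two-phase dynamics, Cole--Hopf, convert back through the enstrophy--$k$ relation) and arrives at the right orders, but as written it has concrete gaps precisely at the points you defer as ``the hard part,'' and one of the mechanisms you commit to would not work. First, the reduction. The static scaling $u(x,t)=k\,U(kx,k^2t)$ leaves the linear rarefaction inside the rescaled equation, coupled to the shock, so your statement that ``periodicity drains the shock amplitude'' remains heuristic and you would still have to control the shock--rarefaction interaction uniformly on $[0,T_*]$. The paper instead uses a \emph{time-dependent} similarity transformation (Lemma \ref{lemma-solution-Burgers}): $u(x,t)=p(t)\bigl(2x-w(\xi,\tau)\bigr)$ with $p(t)=4k/(1+16kt)$, which removes the linear background \emph{exactly}; the drainage becomes the explicit algebraic decay of $p(t)$, and $w$ solves the autonomous Burgers equation on a shrinking domain $|\xi|<2(k-\tau)$ with boundary data $\pm1$. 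This exact separation is the key step your plan is missing, and without it the rest of the two-phase picture does not close.

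Second, the localization of $T_*$. You assert $T_*=\mathcal{O}(k^{-2}\log k)$ (case $l=\mathcal{O}(k)$) by ``balancing'' exponential relaxation against drainage, but the logarithm comes from a specific quantitative competition: the paper computes $R(u_{\rm app})$ explicitly from the Cole--Hopf solution and finds $R(u_{\rm app})=2Np^5(t)e^{-3\tau/4}-8p^4(t)+\cdots$ with $N>0$, so the zero of $R$ occurs when $p(T_*)=\mathcal{O}(e^{3\tau_*/4})$, i.e.\ $\tau_*=\mathcal{O}(\log k)$. You need the explicit rate $e^{-3\tau/4}$ (Lemma \ref{lemma-instant-solution}) and the sign and nonvanishing of the constant $N$; a qualitative ``exponentially fast'' relaxation statement does not produce the exponent of the logarithm nor rule out a plateau. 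Third, your appeal to the spectral gap of the linearized shock operator in exponentially weighted spaces does not transfer to the bounded, shrinking domain with inhomogeneous boundary conditions: as the paper's remark after Lemma \ref{lemma-new-approximation} points out, the resolvent of $\partial_\xi^2+2\tanh(\xi)\partial_\xi$ can grow exponentially with the domain size because the continuous spectrum on $\R$ touches the origin. The paper replaces this by a priori energy estimates with moving Robin boundary conditions (Lemma \ref{lemma-new-approximation}), plus the boundary/initial smallness hypotheses (\ref{approx-solution-w-3})--(\ref{approx-solution-w-3-d}) verified separately in each case; some substitute for that uniform-in-$k$ control on the finite domain is indispensable and is absent from your plan.
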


\begin{remark}
Several obstacles appear in our method when we consider the case $l = \mathcal{O}(1)$
and the scaling law (\ref{case-II}). These obstacles come from the behavior of the solution
$u(x,t)$ near the boundaries $x = \pm \frac{1}{2}$ as well as from the constraints on
the inertial time interval $[0,T]$, during which the solution approaches the viscous shock on
the background of a linear rarefactive wave. \label{remark-obstacle-1}
\end{remark}

\section{Proof of Theorem \ref{theorem-local-bound}}

To prove Theorem \ref{theorem-local-bound}, we obtain a convenient analytical representation
of solutions of the constrained maximization problem (\ref{max-problem}).
We set $v = u_x$ and look for critical points $v \in H^1_{\rm per}(\mathbb{T})$ of the functional,
\begin{equation}
\label{functional-J}
J(v) = \int_{\mathbb{T}} \left( v_x^2 + v^3 + \lambda v^2 + \mu v \right) dx,
\end{equation}
where $\lambda, \mu \in \R$ are the Lagrange multipliers associated with
the following constraints,
\begin{equation}
\label{constraints-J}
\frac{1}{2} \int_{\mathbb{T}} v^2(x) dx = {\cal E}, \quad \int_{\mathbb{T}} v(x) dx = 0.
\end{equation}
The latter constraint ensures that $u$ is a periodic function on $\mathbb{T}$.
If $v$ is even in $H^1_{\rm per}(\mathbb{T})$ and has zero mean,
then $u(x) = \int_0^x v(x') dx'$ is odd in $H^2(\mathbb{T})$,
$u(\pm 1) = 0$, and hence, $u \in \check{H}^2_{\rm per}(\mathbb{T})$.

The Euler--Lagrange equations associated with the functional $J(v)$
yield the second-order differential equation,
\begin{equation}
\label{ELequations-J}
2 v''(x) = 3 v^2(x) + 2 \lambda v(x) + \mu, \quad x \in \mathbb{T}.
\end{equation}
Integrating equation (\ref{ELequations-J}) over $\mathbb{T}$ and using
the constraints (\ref{constraints-J}), we find $\mu = - 6 {\cal E}$. Hence we
are dealing with the family of integrable second-order equations,
\begin{equation}
\label{ELequations}
\frac{d^2 v}{d x^2} = \frac{3}{2} v^2 + \lambda v - 3 {\cal E} \quad \Rightarrow \quad
\left( \frac{d v}{d x} \right)^2 = v^3 + \lambda v^2 - 6 {\cal E} v + I,
\end{equation}
where $I$ is an integration constant.

\begin{figure}
\begin{center}
  \includegraphics[width=0.65\textwidth]{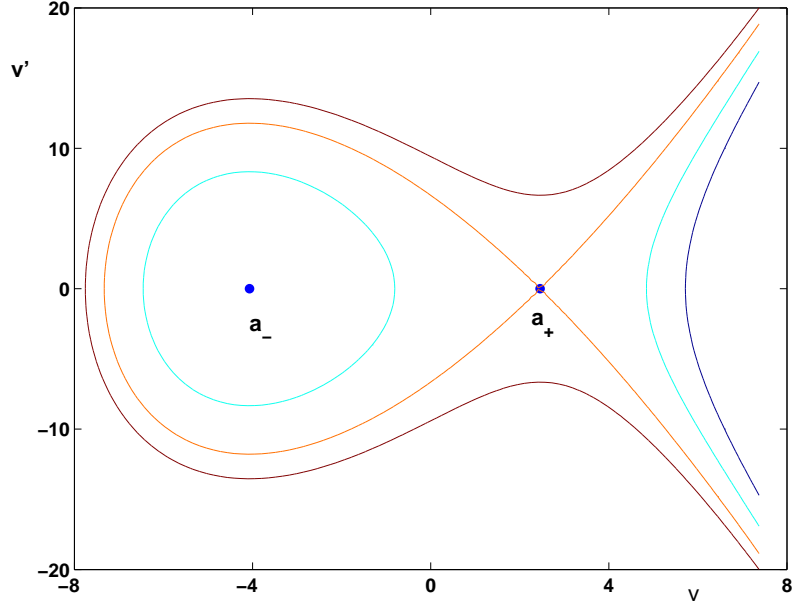}
\end{center}
\caption{Phase portrait for the Euler--Lagrange equation (\ref{ELequations}) with $\mathcal{E} = 5$.}
\label{figure-Phase}
\end{figure}

The phase plane of system (\ref{ELequations}) is given by $(v,v') \in \R^2$.
A typical phase portrait is shown on Figure \ref{figure-Phase}.
There exist two equilibrium points of the second-order equation (\ref{ELequations}), denoted by
$(a_-,0)$ and $(a_+,0)$, where
\begin{equation}
\label{roots-a-pm}
a_{\pm} = \frac{1}{3} \left( - \lambda \pm \sqrt{\lambda^2 + 18 {\cal E}} \right) \quad
\Rightarrow \quad a_- < 0 < a_+.
\end{equation}
Let us define
\begin{equation}
I_{\pm} = -a_{\pm}^3 - \lambda a_{\pm}^2 + 6 {\cal E} a_{\pm} \quad \Rightarrow \quad I_- < I_+.
\end{equation}
The equilibrium $(a_-,0)$ is a center, whereas the equilibrium $(a_+,0)$ is a saddle point.
For $I = I_+$, there exists a homoclinic orbit connecting the stable and unstable manifolds
of the saddle point $(a_+,0)$. This orbit can be found
analytically,
\begin{equation}
\label{relation-k-lambda}
v(x) = a_+ - 4 k^2 {\rm sech}^2(kx), \quad k = \frac{1}{2} \sqrt[4]{\lambda^2 + 18 {\cal E}}.
\end{equation}
Inside the separatrix loop, there is a family of $T$-periodic orbits for $I \in (I_-,I_+)$,
such that $T$ is a strictly increasing function of $I$ with
\begin{equation}
T \to \frac{2 \pi}{\omega} \quad \mbox{\rm as} \quad I \to I_-, \quad
\mbox{\rm and} \quad T \to \infty \quad
\mbox{\rm as} \quad I \to I_+,
\end{equation}
where
$$
\omega^2 = - \lambda - 3 a_- = \sqrt{\lambda^2 + 18 {\cal E}} = 4 k^2.
$$

If $k > \pi$, there is a unique $I_0 \in (I_-,I_+)$ such that $T = 1$ at $I = I_0$.
The corresponding $1$-periodic solution $v \in H^1_{\rm per}(\mathbb{T})$ of equation
(\ref{ELequations-J}) is a critical point of $J(v)$. Integrating (\ref{ELequations}) for
$I = I_0$ further, we obtain a $1$-periodic solution $v(x-x_0)$, where $x_0 \in \mathbb{T}$
is chosen uniquely from the constraints $v'(0) = 0$ and $v(0) < a_- < 0$ (which yield
an odd $u$ with $u'(0) < 0$). In this way, a critical point of $J(v)$ is obtained and
the parameter $\lambda$ needs to be defined by the constraint $E(u) = \mathcal{E}$.

To satisfy the constraint $E(u) = {\cal E}$ and to justify the asymptotic expansion
(\ref{profile-u-star}), we can use the representation,
\begin{equation}
\label{representation-J}
v(x) = a_+ - 4k^2 y(\xi),\quad \xi = k x,
\end{equation}
where $y(\xi)$ is a $k$-periodic solution of the second-order equation,
\begin{equation}
\label{cnoidal-wave}
y'' - 4 y + 6 y^2  = 0.
\end{equation}
The $k$-periodic solution of equation (\ref{cnoidal-wave}), which is called a {\em cnoidal wave},
is equivalent to a $k$-periodic sequence of homoclinic solutions, which are called
{\em solitary waves} \cite{Boyd} (see also Chapter 3 in \cite{Pava}).
This representation uses the theory of Jacobi's elliptic functions.
We can obtain an equivalent approximation result by using methods of the dynamical system theory \cite{Scheel}.

\begin{lemma}
There are $k_0 > 0$ and $C > 0$ such that for all $k \in (k_0,\infty)$,
the $k$-periodic solution $y(\xi)$ of the second-order equation (\ref{cnoidal-wave}) is close
to the solitary wave $y_{\infty}(\xi) = {\rm sech}^2(\xi)$ with the error bound,
\begin{equation}
\label{approx-theorem}
\sup_{\xi \in [-k/2,k/2]} \left| y(\xi) - y_{\infty}(\xi) \right| \leq C e^{-k}.
\end{equation}
\end{lemma}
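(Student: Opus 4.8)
The plan is to treat (\ref{cnoidal-wave}) as a reversible planar system and to show that the large-period orbit converges to the homoclinic loop at the exponential rate dictated by the unstable eigenvalue of the saddle. Writing $y' = w$, $w' = 4y - 6y^2$, the origin $(0,0)$ is a hyperbolic saddle with eigenvalues $\pm 2$, the equilibrium $(2/3,0)$ is a center, and the conserved quantity $H(y,w) = \tfrac12 w^2 - 2y^2 + 2y^3$ foliates the interior of the separatrix loop by periodic orbits. The profile $y_\infty(\xi) = \sech^2(\xi)$ is the homoclinic orbit on the level $H = 0$, and as $|\xi| \to \infty$ one has $y_\infty(\xi) \sim 4 e^{-2|\xi|}$, so that $y_\infty(k/2) = \mathcal{O}(e^{-k})$: the rate $e^{-k}$ in (\ref{approx-theorem}) is exactly $e^{-2\cdot(k/2)}$, the decay of the homoclinic over half the fundamental domain. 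The system is reversible under $(\xi,w)\mapsto(-\xi,-w)$, so I normalize the $k$-periodic orbit to be even in $\xi$, with maximum $y_{\max}$ at $\xi = 0$ and minimum $\varepsilon := y_{\min} > 0$ attained at $\xi = \pm k/2$.

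First I would pin down the amplitude--period relation. On the periodic level set, $(y')^2 = 4 (y-\varepsilon)(y_{\max}-y)(y-r_3)$, where $y_{\max} = 1 - \varepsilon^2 + \mathcal{O}(\varepsilon^3)$ and $r_3 = -\varepsilon(1-\varepsilon)$ are the remaining turning points (the roots of the cubic $y^2 - y^3 - \varepsilon^2 + \varepsilon^3$, one of which is $y=\varepsilon$). Half of the period is the quadrature
\begin{equation}
\frac{k}{2} = \int_{\varepsilon}^{y_{\max}} \frac{dy}{2\sqrt{(y-\varepsilon)(y_{\max}-y)(y-r_3)}}.
\end{equation}
Splitting this integral into the saddle region $y \in (\varepsilon,\delta)$ for a fixed small $\delta$, where $(y-\varepsilon)(y-r_3) = y^2 - \varepsilon^2 + \mathcal{O}(\varepsilon^2 y)$ produces the logarithmic divergence $\tfrac12\log(1/\varepsilon)$, and the regular remainder $y \in (\delta, y_{\max})$, which converges to an $\mathcal{O}(1)$ constant, I obtain $k/2 = -\tfrac12\log\varepsilon + C_0 + o(1)$ as $\varepsilon \to 0^+$. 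Hence $\varepsilon = \varepsilon(k) = \mathcal{O}(e^{-k})$ with a matching lower bound, and since the period is a strictly monotone function of the energy level (as already noted above), this $\varepsilon(k)$ and the corresponding even orbit are unique.

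With the orbits aligned at $\xi = 0$, I would then estimate $z := y - y_\infty$, which satisfies $z'' = (4 - 12 y_\infty)z - 6z^2$ together with $z(0) = y_{\max} - 1 = \mathcal{O}(\varepsilon^2)$ and $z'(0) = 0$. The homogeneous operator $\partial_\xi^2 - (4 - 12y_\infty)$ possesses the decaying solution $\phi_1 = y_\infty'$ (from translation invariance, $\phi_1 \sim e^{-2|\xi|}$) and an independent companion $\phi_2 \sim e^{+2|\xi|}$; since $4 - 12 y_\infty(\xi) \to 4$ as $|\xi|\to\infty$, the equation has an exponential dichotomy with rates $\pm 2$. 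Decomposing the data of size $\mathcal{O}(\varepsilon^2)$ in this basis shows that the growing-mode coefficient is $\mathcal{O}(\varepsilon^2)$, so the linear part obeys $|z(\xi)| \le C\varepsilon^2 e^{2|\xi|}$ on $[-k/2,k/2]$. Because $\varepsilon = \mathcal{O}(e^{-k})$, this already gives $|z(\pm k/2)| = \mathcal{O}(\varepsilon^2 e^{k}) = \mathcal{O}(\varepsilon) = \mathcal{O}(e^{-k})$, and the supremum over the fundamental domain is attained near the endpoints; a continuity/bootstrap argument feeds the weighted bound into the quadratic forcing $-6z^2 = \mathcal{O}(\varepsilon^4 e^{4|\xi|})$, which is smaller by a factor $\varepsilon$ at the endpoints and therefore does not alter the leading rate. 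This yields (\ref{approx-theorem}).

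The main obstacle is the sharp exponential comparison in the last step. A crude Gronwall estimate using a global Lipschitz constant for $4 - 12 y_\infty$ would amplify the $\mathcal{O}(\varepsilon^2)$ seed by a factor far larger than $e^{k}$ and destroy the bound; one must instead use the genuine hyperbolic structure near the saddle --- the explicit fundamental pair $\phi_{1,2}$ and the exponential dichotomy with rate exactly $2$ --- so that the amplification over the half-domain is precisely $e^{2\cdot(k/2)} = e^{k}$. Equivalently, this is the step where a variation-of-constants or Lin's-method argument is needed to show that the growing-mode content of $z$ is no larger than the $\mathcal{O}(\varepsilon^2)$ data at $\xi = 0$, and that the quadratic nonlinearity stays subdominant throughout $[-k/2,k/2]$ rather than only at its endpoints. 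This is precisely the place where \cite{Scheel} replaces the classical elliptic-function computation.
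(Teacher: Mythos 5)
Your argument is correct in outline but follows a genuinely different route from the paper. You work with the initial-value/shooting formulation: the explicit first integral $H(y,w)=\tfrac12 w^2-2y^2+2y^3$, the elliptic-integral period asymptotics pinning the minimum $\varepsilon=y(\pm k/2)=\Theta(e^{-k})$ and the maximum $y_{\max}=1+\mathcal{O}(\varepsilon^2)$, and then a comparison of the periodic orbit with the homoclinic by propagating the $\mathcal{O}(\varepsilon^2)$ data at $\xi=0$ through the exponential dichotomy of $\partial_\xi^2-(4-12\,{\rm sech}^2\xi)$ with rates exactly $\pm 2$, so that the amplification over the half-domain is $e^{k}$ and no more. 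The paper instead works with the periodic boundary-value formulation: it periodizes the soliton, $y_0(\xi)=\sum_{n\in\Z}{\rm sech}^2(\xi+kn)$, writes $y=y_0+Y$, and solves $Y=\hat L^{-1}F(Y)$ by contraction in $\hat H^2_{\rm per}(\mathbb{T}_k)$, using that $L=-\partial_\xi^2+4-12\,{\rm sech}^2\xi$ has only the odd function $y_\infty'$ in its kernel and is therefore uniformly invertible on even functions, while the soliton-interaction residuals are $\mathcal{O}(e^{-k})$ by the tail estimates (\ref{bound-1})--(\ref{bound-2}). The trade-off is instructive: your route is more elementary, yields the amplitude asymptotics $y_{\min}\sim C e^{-k}$ explicitly, and gets existence/uniqueness from monotonicity of the period map; but its delicate step --- showing the growing-mode content stays $\mathcal{O}(\varepsilon^2)$ under the quadratic forcing, via variation of constants in the explicit basis $\phi_1=y_\infty'\sim e^{-2|\xi|}$, $\phi_2\sim e^{+2|\xi|}$ --- is exactly the step you flag as the ``main obstacle'' and leave unexecuted (it does close: the Duhamel term is $\mathcal{O}(\varepsilon^4 e^{4|\xi|})$, hence $\mathcal{O}(\varepsilon^2)$ at the endpoints, so a continuity argument works, but this must be written out). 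The paper's formulation sidesteps that IVP-amplification issue entirely, because in the periodic BVP the linear solve carries a $k$-uniform bound and the error is controlled by the size of the residual of the periodized ansatz; it also generalizes to problems without a first integral, which is why the paper cites \cite{Scheel} rather than the classical elliptic-function computation.
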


\begin{proof}
We write a $k$-periodic sequence of the solitary waves as
$$
y_0(\xi) = \sum_{n \in \Z} {\rm sech}^2(\xi + k n)
$$
and decompose the solution $y$ of equation (\ref{cnoidal-wave}) as $y(\xi) = y_0(\xi) + Y(\xi)$.
After straightforward computations, $Y$ satisfies
$$
L Y = F(Y),
$$
where $L = -\partial_{\xi}^2 + 4 - 12 {\rm sech}^2(\xi)$ and
$$
F(Y) = 6 Y^2 + 12 \sum_{n \in \Z \backslash \{0\}} {\rm sech}^2(\xi + k n) Y +
6 \sum_{n \in \Z} \sum_{l \in \Z \backslash \{n\}} {\rm sech}^2(\xi + k n) {\rm sech}^2(\xi + k l).
$$

The operator $L : H^2(\R) \to L^2(\R)$ has a one-dimensional kernel spanned by
the odd function $y_{\infty}'(\xi)$.
The rest of the spectrum of $L$ includes an isolated eigenvalue at $-5$ and the continuous
spectrum for $[4,\infty)$. Hence the operator $L$ is invertible in the space of
even functions.

Let $\mathbb{T}_k := \left[-\frac{k}{2},\frac{k}{2}\right]$ and denote the restriction
of $H^2_{\rm per}(\mathbb{T}_k)$ to even functions by $\hat{H}^2_{\rm per}(\mathbb{T}_k)$.
Let $\hat{L} : \hat{H}^2_{\rm per}(\mathbb{T}_k) \to \hat{L}^2_{\rm per}(\mathbb{T}_k)$
be a restriction of $L$. Then, $\hat{L}$ is invertible if $k$ is sufficiently large,
and existence and uniqueness of small solutions $Y \in H^2_{\rm per}(\mathbb{T}_k)$
of the fixed-point problem $Y = \hat{L}^{-1} F(Y)$ can be found by using contraction mapping arguments,
provided that
$$
\left\| \sum_{n \in \Z \backslash \{0\}} {\rm sech}^2(\cdot + k n) \right\|_{L^{\infty}(\mathbb{T}_k)} \quad \mbox{\rm and}
\quad \left\| \sum_{n \in \Z} \sum_{l \in \Z \backslash \{n\}} {\rm sech}^2(\cdot + k n)
{\rm sech}^2(\cdot + k l) \right\|_{L^2(\mathbb{T}_k)}
$$
converge to zero as $k \to \infty$. We check this by explicit computations. There are
$k_0 > 0$ and $C_1, C_2 > 0$ such that for all $k \in (k_0,\infty)$, we have
\begin{equation}
\label{bound-1}
\left\| \sum_{n \in \Z \backslash \{0\}} {\rm sech}^2(\cdot + k n) \right\|_{L^{\infty}(\mathbb{T}_k)} \leq
\sum_{n \in \N} \frac{8 e^{-(2n-1)k}}{(1 + e^{-(2 n-1)k})^2} \leq C_1 e^{-k}
\end{equation}
and
\begin{eqnarray}
\nonumber
& \phantom{t} & \left\| \sum_{n \in \Z} \sum_{l \in \Z \backslash \{n\}} {\rm sech}^2(\cdot + k n)
{\rm sech}^2(\cdot + k l) \right\|_{L^2(\mathbb{T}_k)} \\ \nonumber
& \phantom{t} &  \phantom{text} \leq \left\| {\rm sech}^2(\cdot) \right\|_{L^2(\mathbb{T}_k)}
\left\| \sum_{l \in \Z \backslash \{ 0 \}} {\rm sech}^2(\cdot + k l)
\right\|_{L^{\infty}(\mathbb{T}_k)}  \\\nonumber
& \phantom{t} & \phantom{texttexttexttexttexttext}
+ 2 \sum_{n \in \N} \left\| {\rm sech}^2(\cdot + k n) \right\|_{L^{\infty}(\mathbb{T}_k)}
\left\| \sum_{l \in \Z \backslash \{n\}} {\rm sech}^2(\cdot + k l) \right\|_{L^2(\mathbb{T}_k)} \\ \nonumber
& \phantom{t} & \phantom{text} \leq 3 \left\| {\rm sech}^2(\cdot) \right\|_{L^2(\mathbb{T}_k)}
\left\| \sum_{l \in \Z \backslash \{ 0 \}} {\rm sech}^2(\cdot + k l)
\right\|_{L^{\infty}(\mathbb{T}_k)} \\\nonumber
& \phantom{t} & \phantom{texttexttexttexttexttext}
+ 4 k^{1/2} \sum_{n \in \N} \sum_{l \in \N} \left\| {\rm sech}^2(\cdot + k n) \right\|_{L^{\infty}(\mathbb{T}_k)}
\left\| {\rm sech}^2(\cdot + k l) \right\|_{L^{\infty}(\mathbb{T}_k)} \\
& \phantom{t} & \phantom{text} \leq 2 \sqrt{3} C_1 e^{-k} + 4 C_1^2 k^{1/2} e^{-2k} \leq C_2 e^{-k}.
\label{bound-2}
\end{eqnarray}
Existence and uniqueness of $Y$ with the error bound (\ref{approx-theorem}) follow
from bounds (\ref{bound-1}) and (\ref{bound-2}), the contraction mapping arguments
for $Y = \hat{L}^{-1} F(Y)$ in $\hat{H}^2_{\rm per}(\mathbb{T}_k)$, and the Sobolev
embedding of $H^2_{\rm per}(\mathbb{T}_k)$ to $L^{\infty}_{\rm per}(\mathbb{T}_k)$.
\end{proof}

To define uniquely $\lambda$ in terms of ${\cal E}$, we recall the constraints (\ref{constraints-J}),
which are equivalent to the scalar equation,
\begin{equation}
\label{constraint-y}
a_+ = 4 k \int_{-k/2}^{k/2} y(\xi) d\xi = 8 k (1 + {\cal O}(k e^{-k})).
\end{equation}
Because $a_+$ is related to $k$ by the exact expression (\ref{roots-a-pm}),
constraint (\ref{constraint-y}) yields a relationship between $k$ and ${\cal E}$ given by
\begin{equation}
\label{root-find}
a_+ = \frac{6 \mathcal{E}}{\lambda + 4 k^2} \quad \Rightarrow \quad
32 k^3 \left( 1 + \mathcal{O}(k e^{-k}) \right) =  \frac{6 \mathcal{E}}{1 + \sqrt{1 - \frac{9\mathcal{E}}{8 k^4}}}.
\end{equation}
Hence we obtain
\begin{equation}
\label{approx-solution}
k = \left( \frac{3}{32} {\cal E} \right)^{1/3} + 1 + \mathcal{O}(\mathcal{E}^{-1/3})
\quad \mbox{\rm as} \quad {\cal E} \to \infty,
\end{equation}
which yields as $\mathcal{E} \to \infty$,
\begin{eqnarray*}
\lambda & = & 4 k^2 \sqrt{1 - \frac{9\mathcal{E}}{8 k^4}} = \left( \frac{3}{4} {\cal E} \right)^{2/3} + \mathcal{O}(\mathcal{E}^{1/3})
\end{eqnarray*}
and
\begin{eqnarray*}
a_+ & = & 8 k (1 + {\cal O}(k e^{-k})) = \left( 48 {\cal E} \right)^{1/3} + \mathcal{O}(1).
\end{eqnarray*}
It follows from (\ref{approx-solution}) that
$$
\mathcal{E} = \frac{32}{3} k^3 - 32 k^2 + \mathcal{O}(k), \quad \mbox{\rm as} \quad k \to \infty,
$$
which recovers (\ref{expansion-K-2}).

Using constraints (\ref{constraints-J}), the Euler--Lagrange equation (\ref{ELequations-J}), the representation (\ref{representation-J}), and the constraint (\ref{constraint-y}), we obtain
\begin{eqnarray}
\nonumber
R(u) & = & 2 \lambda {\cal E} + \frac{1}{2} \int_{\mathbb{T}} v^3(x) d x \\ \label{expression-for-R}
& = & 2 \lambda {\cal E} + a_+^2 (4k^2 - a_+) - 32 k^5 \int_{-k/2}^{k/2} y^3(\xi) d \xi.
\end{eqnarray}
Approximations (\ref{approx-theorem}), (\ref{root-find}), and (\ref{approx-solution})
in (\ref{expression-for-R}) yield
$$
R(u) = \frac{256}{5} k^5 + {\cal O}(k^4) \quad \mbox{\rm as} \quad k \to \infty,
$$
which recovers (\ref{expansion-K-3}).

To obtain (\ref{profile-u-star}) and (\ref{expansion-K-1}),
we integrate the solution $v(x)$ and write
\begin{equation}
\label{representation-U}
u(x) = \int_0^x v(x') dx' = a_+ x - 4k z(k x),
\end{equation}
where
$$
z(\xi) = \int_0^{\xi} y(\xi') d \xi', \quad \xi = k x.
$$
Therefore, we have
\begin{eqnarray*}
K(u) & = & \frac{1}{24} a_+^2 - 8 k a_+ \int_0^{1/2} x z(kx) dx + 8 k \int_{-k/2}^{k/2} z^2(\xi) d\xi.
\end{eqnarray*}
It follows from (\ref{approx-theorem}) that for all large $k$, there is $C > 0$ such that
$z(\xi)$ is close to $z_{\infty}(\xi) = \tanh(\xi)$ with the error bound,
\begin{equation}
\label{approx-theorem-for-z}
\sup_{\xi \in [-k/2,k/2]} \left| z(\xi) - z_{\infty}(\xi) \right| \leq C k e^{-k},
\end{equation}
which recovers (\ref{profile-u-star}) thanks to the expansion (\ref{constraint-y}).

Integrating by parts, we also find the elementary expansion,
$$
\int_0^{1/2} x \tanh(kx) dx = \frac{1}{8} + \frac{1}{4 k^2} \int_0^{\infty} \log(1 + e^{-z}) dz +
{\cal O}(k^{-1} e^{-k}) \quad \mbox{\rm as} \quad k \to \infty.
$$
Together with (\ref{constraint-y}) and (\ref{approx-solution}), this expansion yields
$$
K(u) = \frac{8}{3} k^2 - 16 k + 32 \int_0^{\infty} \log(1 + e^{-z}) dz + {\cal O}(k^3 e^{-k}))
\quad \mbox{\rm as} \quad k \to \infty,
$$
which recovers (\ref{expansion-K-1}).

To complete the proof of Theorem \ref{theorem-local-bound}, we need to show that
the critical point $u \in \check{H}^2_{\rm per}(\mathbb{T})$ of $J(u_x)$ is a maximizer of $R(u)$.
However, this follows from the uniqueness of the critical point $u_*$, for which $R(u_*) > 0$,
and from the fact that $R(u)$ is bounded from above by the bound (\ref{bound-on-R}). Therefore,
the proof of Theorem \ref{theorem-local-bound} is complete.

\section{Burgers equation on the unit circle}

To develop the proof of Theorem \ref{theorem-time}, we convert the initial-value problem
for the Burgers equation (\ref{Burgers}) with initial data (\ref{initial-data})--(\ref{initial-function})
to a convenient form, which separates the decay of the linear rarefactive wave
and the relative dynamics of a shock on the background of the rarefactive wave.

\begin{lemma}
\label{lemma-solution-Burgers}
Let $u_0$ be given by (\ref{initial-data})--(\ref{initial-function}). Then,
a unique solution $u \in C(\R_+,H^1_{\rm per}(\mathbb{T}))$
of the Burgers equation (\ref{Burgers}) is given by
\begin{equation}
\label{representation-u-1}
u(x,t) = p(t) \left( 2 x - w(\xi(x,t),\tau(t)) \right), \quad x \in \mathbb{T}, \quad t \in \R_+,
\end{equation}
where
\begin{equation}
\label{representation-u-2}
p(t) = \frac{4k}{1 + 16 k t}, \quad \xi(x,t) = \frac{4 k x}{1 + 16 k t}, \quad
\tau(t) = \frac{16 k^2 t}{1 + 16 k t},
\end{equation}
and $w(\xi,\tau)$ is a unique odd solution of the Burgers equation,
\begin{equation}
\label{Burgers-rescaled}
\left\{ \begin{array}{l} w_{\tau} = 2 w w_{\xi} + w_{\xi \xi}, \quad
\; |\xi| < 2 (k - \tau),
\quad \tau \in (0,k), \\ w|_{\tau = 0} = f(\xi/4k), \quad
\;\; |\xi| \leq 2k,\end{array} \right.
\end{equation}
subject to the boundary conditions $w = \pm 1$ at $\xi = \pm 2(k - \tau)$.
\end{lemma}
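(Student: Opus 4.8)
The plan is to prove the lemma by an explicit invertible change of variables that separates the rarefactive wave $p(t)\cdot 2x$ from the relative shock dynamics carried by $w$. The natural starting point is the observation that the linear-in-$x$ field $U(x,t)=8kx/(1+16kt)$ is itself an exact solution of (\ref{Burgers}): since $U_{xx}=0$ and a direct computation gives $U_t+2UU_x=0$, this field is an unforced rarefactive wave. This singles out the amplitude $p(t)=4k/(1+16kt)$ and motivates the ansatz $u(x,t)=p(t)\,(2x-w(\xi,\tau))$ with the self-similar variables $\xi,\tau$ of (\ref{representation-u-2}). Because $f\in\mathcal{C}^1(\mathbb{T})$ implies $u_0\in H^1_{\rm per}(\mathbb{T})$, the global well-posedness noted after (\ref{bound-on-E}) already provides a unique $u\in C(\R_+,H^1_{\rm per}(\mathbb{T}))$, which by parabolic smoothing is classical for $t>0$; the content of the lemma is that this $u$ has the claimed form.

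The core of the argument is a direct substitution. First I would record the elementary identities that drive every cancellation: writing $\rho=1+16kt$, one has $\xi=px$, hence $\xi_x=p$, together with $p'=-4p^2$, $\xi_t=p'x=-4p^2x$, and $\tau_t=p^2$ (the last using $\rho-16kt=1$). With these,
\[
u_x = 2p - p^2 w_\xi, \qquad u_{xx} = -p^3 w_{\xi\xi},
\]
\[
u_t = -8p^2 x + 4p^2 w + 4p^3 x w_\xi - p^3 w_\tau, \qquad
2uu_x = 8p^2 x - 4p^3 x w_\xi - 4p^2 w + 2p^3 w w_\xi .
\]
Adding $u_t$ and $2uu_x$, the terms proportional to $x$, to $w$, and to $xw_\xi$ cancel in pairs, leaving $u_t+2uu_x=p^3(-w_\tau+2ww_\xi)$, so that $u_t+2uu_x-u_{xx}=p^3(-w_\tau+2ww_\xi+w_{\xi\xi})$. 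Since $p\neq0$, the Burgers equation for $u$ is equivalent to $w_\tau=2ww_\xi+w_{\xi\xi}$, which is (\ref{Burgers-rescaled}).

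It remains to match the data, the domain, and the symmetry and boundary conditions. At $t=0$ one has $\rho=1$, $p=4k$, $\xi=4kx$, $\tau=0$, so $u(x,0)=4k(2x-w(4kx,0))$ agrees with $u_0$ in (\ref{initial-data}) exactly when $w(\xi,0)=f(\xi/4k)$. Inverting (\ref{representation-u-2}) gives $\rho=k/(k-\tau)$, hence $t=\tau/(16k(k-\tau))$ and $x=\xi/(4(k-\tau))$; this is a smooth diffeomorphism carrying $t\in\R_+$ onto $\tau\in[0,k)$ and $x\in[-\tfrac12,\tfrac12]$ onto $|\xi|\le 2(k-\tau)$, which is the stated moving domain. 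Because $f$ is odd by (\ref{initial-function}), $u_0$ is odd, and the symmetry $u(x,t)\mapsto-u(-x,t)$ of (\ref{Burgers}) together with uniqueness keeps $u$ odd for all $t$, forcing $w$ to be odd in $\xi$. For an odd $1$-periodic function, periodicity gives $u(\tfrac12,t)=u(-\tfrac12,t)$ while oddness gives $u(-\tfrac12,t)=-u(\tfrac12,t)$, so $u(\pm\tfrac12,t)=0$; evaluating the representation at $x=\pm\tfrac12$, where $\xi=\pm2(k-\tau)$, yields $w(\pm2(k-\tau),\tau)=\pm1$, the prescribed boundary conditions.

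Finally, uniqueness of $w$ and regularity follow from the diffeomorphism: it puts odd solutions $w$ of (\ref{Burgers-rescaled}) in bijection with odd periodic solutions $u$ of (\ref{Burgers}), so the global uniqueness of $u$ transfers to $w$, and smoothness of the map transfers the $C(\R_+,H^1_{\rm per})$ regularity. The main point requiring care is not a deep obstacle but the bookkeeping in the substitution: the explicit $x$-dependence enters both through the $2x$ term and through $\xi=px$, producing several $x$- and $xw_\xi$-type contributions, and the cancellations succeed only because $p,\xi,\tau$ are tuned so that $p'=-4p^2$, $\xi_t=p'x$, and $\tau_t=p^2$. A secondary point to verify is compatibility at the moving boundary, namely that the Dirichlet values $w=\pm1$ at $\xi=\pm2(k-\tau)$ are consistent with the initial condition through $f(\tfrac12)=1$ at $\tau=0$.
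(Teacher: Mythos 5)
Your proof is correct and takes essentially the same approach as the paper, which itself remarks that the result ``can be constructed by a direct substitution''; your identities $p'=-4p^2$, $\xi_t=p'x$, $\tau_t=p^2$ and the resulting cancellations all check out, as do the domain, parity, initial-data, and boundary-condition matchings. The only cosmetic difference is that the paper derives the transformation constructively in two stages (first factoring out $p(t)$ to get an intermediate equation for $U(x,q)$, then removing the $4xU_x$ term by characteristics), whereas you verify the composite change of variables in one step.
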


\begin{proof}
Although the proof can be constructed by a direct substitution, we will give
all intermediate details. Recall that the initial-value problem
(\ref{Burgers}) has a unique global solution $u \in C(\R_+,H^1_{\rm per}(\mathbb{T}))$
if the initial data $u_0$ satisfies (\ref{initial-data})--(\ref{initial-function}).
Odd solutions in $x$ are preserved in the time evolution of the Burgers equation
(\ref{Burgers}) and the Sobolev embedding of $H^1_{\rm per}(\mathbb{T})$ to
$C_{\rm per}(\mathbb{T})$ implies that the boundary conditions $u(\pm \frac{1}{2},t) = 0$
are preserved for all $t > 0$.

Let us look for the exact solution of the Burgers equation (\ref{Burgers})
in the separable form,
$$
u(x,t) = p(t) \left(2 x - U(x,q(t)) \right),
$$
where $p(t)$, $q(t)$, and $U(x,q)$ are new variables. If we choose $\dot{p} = -4 p^2$ and
$\dot{q} = p$ starting with $p(0) = p_0$ and $q(0) = 0$,
then $U(x,q)$ satisfies the initial-value problem,
\begin{equation}
\label{Burgers-intermediate}
\left\{ \begin{array}{l} U_q + 4 x U_x = 2 U U_x + \frac{1}{p(t)} U_{xx}, \quad
x \in \mathbb{T}, \quad q > 0,\\
U|_{t = 0} = f(x), \qquad \phantom{textttext} \qquad
x \in \mathbb{T}, \end{array} \right.
\end{equation}
subject to the boundary conditions $U = \pm 1$ at $x = \pm \frac{1}{2}$.
In addition, $U$ is odd in $x$ for any $q > 0$.

We find from the differential equations $\dot{p} = -4 p^2$ and
$\dot{q} = p$ that
$$
p(t) = \frac{p_0}{1 + 4 p_0 t}, \quad q(t) = \frac{1}{4} \log(1 + 4 p_0 t).
$$
Solving equation $U_q + 4 x U_x = 0$ along the characteristics, we define
$$
\frac{dx}{dq} = 4x \quad \Rightarrow \quad x = C e^{4 q},
$$
where $C$ is an integration constant. Noting that $p(t) = p_0 e^{-4q}$, we define
$(\xi,\tau)$ by
$$
\frac{\partial \tau}{\partial q} = p_0 e^{-4q}, \quad
\frac{\partial \xi}{\partial q} = - 4 \xi, \quad \frac{\partial \tau}{\partial x} = p_0 e^{-4q}.
$$
Integrating these equations, we obtain the substitution,
$$
U(x,q) = w(\xi,\tau), \quad \xi = p_0 x e^{-4q} = \frac{p_0 x}{1 + 4 p_0 t}, \quad
\tau = \frac{1}{4} p_0 (1 - e^{-4q}) = \frac{p_0^2 t}{1 + 4 p_0 t},
$$
which transforms (\ref{Burgers-intermediate}) to (\ref{Burgers-rescaled}).
Odd functions $U$ in $x$ become odd functions $w$ in $\xi$
and the boundary conditions $U = \pm 1$ at $x = \pm \frac{1}{2}$ become
the boundary conditions $w = \pm 1$ at $\xi = \pm \frac{1}{2}(p_0 - 4 \tau)$.
Setting $p_0 = 4k$ yields (\ref{representation-u-1}), (\ref{representation-u-2}),
and (\ref{Burgers-rescaled}).
\end{proof}

\begin{remark}
\label{remark-obstacle-2}
The scaling law (\ref{future-rates}) formally follows from the similarity transformation
(\ref{representation-u-1}). If there exists an inertial range $C_- \leq k t \leq C_+$
for some $k$-independent constants $0 < C_- < C_+ < \infty$, where the $H^1$-norm
of $w$ in $\xi$ is $k$-independent, then in this range, $p(t) = \mathcal{O}(k)$,
$K(u) = \mathcal{O}(k^2)$, $E(u) = \mathcal{O}(k^3)$, where $k = \mathcal{O}(\mathcal{E}^{1/2})$
as $\mathcal{E} = E(u_0) \to \infty$. To prove this claim rigorously, we study
a convergence of solutions of the rescaled Burgers equation (\ref{Burgers-rescaled})
starting with the initial condition $w_0(\xi) = f(\xi/4k)$
to the $k$-independent viscous shock $w_{\infty}(\xi) = \tanh(\xi)$
in a bounded but large domain $|\xi| \leq 2 (k-\tau)$ for $\tau = \mathcal{O}(k)$.
For a control of error terms, we have to specify further restriction
on the initial condition $f(\xi/4k)$, which result in weaker statements
(\ref{arg-max}) and (\ref{arg-max-theorem}) of Theorem \ref{theorem-time}.
\end{remark}

The Burgers equation $w_{\tau} = 2 w w_{\xi} + w_{\xi \xi}$ admits the viscous shock,
\begin{equation}
\label{exact-front-attractor}
w_{\infty}(\xi) = \tanh(\xi).
\end{equation}
In the initial-value problem,
\begin{equation}
\label{Burgers-attractor}
\left\{ \begin{array}{l} w_{\tau} = 2 w w_{\xi} + w_{\xi \xi}, \quad
\xi \in \R, \quad \tau \in \R_+, \\ w|_{\tau = 0} = g(\xi/a), \quad
\;\;\;\; \xi \in \R,\end{array} \right.
\end{equation}
where $a > 0$ is a parameter, the viscous shock is an asymptotically stable
attractor in the space of odd functions $g$ with fast (exponential) decay to $\pm 1$ as
$\xi \to \pm \infty$ \cite{Goodman}. To be able to deal with
the dynamics of viscous shocks in the initial-value problem (\ref{Burgers-rescaled})
on a bounded domain, we shall first clarify the dynamics of viscous shocks
in the initial-value problem (\ref{Burgers-attractor}) on the infinite line.

\section{Burgers equation on the infinite line}

Let us rewrite the initial-value problem (\ref{Burgers-attractor})
for the Burgers equation on the infinite line by using the original (unscaled)
variables,
\begin{equation}
\label{Burgers-line}
\left\{ \begin{array}{l} u_t + 2u u_x = u_{xx}, \quad x \in \R, \;\; t \in \R_+, \\
u|_{t = 0} = u_0, \quad \quad \quad \;\; x \in \R, \end{array} \right.
\end{equation}
We impose the nonzero (shock-type) boundary conditions,
\begin{equation}
\label{boundary-conditions}
\lim_{x \to \pm \infty} u(x,t) = \mp U_{\infty}, \quad t \in \R_+,
\end{equation}
for some $U_{\infty} > 0$.

To control the enstrophy on the infinite line, we define
\begin{equation}
\label{enstrophy-line}
E(u) = \frac{1}{2} \int_{\R} u_x^2 dx, \quad R(u) = -\int_{\R} ( u_{xx}^2 + u_x^3 ) dx, \quad \frac{d E(u)}{d t} = R(u).
\end{equation}
The bound $|R(u)| \leq \frac{3}{2} E^{5/3}(u)$ on the enstrophy growth is derived similarly
to (\ref{bound-on-R}). In the case of the infinite line, the maximizer of $R(u)$ at fixed $E(u)$ is not decaying
at infinity. On the other hand, the result of Theorem \ref{theorem-local-bound} becomes now explicit.

\begin{lemma}
The maximization problem,
\begin{equation}
\label{max-problem-line}
\max_{u_x \in H^1(\R)} R(u) \quad \mbox{\rm subject to} \quad E(u) = {\cal E},
\end{equation}
admits a unique odd solution
\begin{equation}
\label{front}
u_*(x) = - 4k \tanh(kx),
\end{equation}
where $k$ is defined implicitly by ${\cal E}$,
\begin{equation}
\label{E-front}
{\cal E} = E(u_*) = \frac{32}{3} k^3,
\end{equation}
and
\begin{equation}
\label{R-front}
R(u_*) = \frac{256}{5} k^5 = \frac{3^{5/3}}{5 \cdot 2^{1/3}} {\cal E}^{5/3}.
\end{equation}
\label{lemma-R-infty}
\end{lemma}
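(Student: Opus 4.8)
The plan is to mirror the variational analysis behind Theorem~\ref{theorem-local-bound}, exploiting the fact that on the whole line the relevant first integral becomes an exactly solvable homoclinic orbit rather than a cnoidal wave. First I would set $v = u_x \in H^1(\R)$, so that the constraint reads $\tfrac12\int_\R v^2\,dx = \mathcal{E}$ and $R(u) = -\int_\R (v_x^2 + v^3)\,dx$; the maximization problem (\ref{max-problem-line}) is then a single-constraint variational problem for $v$. The essential structural simplification relative to the circle is that on the line there is no periodicity (mean-zero) constraint, so the Lagrange functional carries only the multiplier $\lambda$ for the enstrophy constraint, $J(v) = \int_\R (v_x^2 + v^3 + \lambda v^2)\,dx$, and the analogue of the multiplier $\mu$ in (\ref{functional-J}) is absent. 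The Euler--Lagrange equation is therefore $2v'' = 3v^2 + 2\lambda v$, i.e.\ equation (\ref{ELequations-J}) with $\mu = 0$, whose first integral is $(v')^2 = v^3 + \lambda v^2 + I$.

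Next I would use the decay imposed by the function space. Any admissible $v \in H^1(\R)$ satisfies $v, v' \to 0$ as $x \to \pm\infty$, which forces $I = 0$ and reduces the first integral to $(v')^2 = v^2(v + \lambda)$. For a nontrivial orbit homoclinic to the origin the right-hand side must be nonnegative near $v = 0$, which requires $\lambda > 0$ and confines the loop to $-\lambda \le v \le 0$; in particular the profile automatically has $v = u_x < 0$, consistent with the condition $u_*'(0) < 0$ and with the sign of $-\int v^3$ that makes $R$ large. Writing $\lambda = 4k^2$, the loop is solved explicitly by the solitary wave $v(x) = -4k^2\sech^2(kx)$, and integrating $v = u_x$ gives $u_*(x) = -4k\tanh(kx)$, which is (\ref{front}) and satisfies (\ref{boundary-conditions}) with $U_\infty = 4k$. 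Evenness of $v$ (oddness of $u_*$) pins the otherwise-free translation, so this is the \emph{unique} odd critical point, with $\lambda$ and hence $k$ fixed monotonically by $\mathcal{E}$.

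The constants then follow from elementary integrals. Using $\int_\R \sech^4 = \tfrac43$ I obtain $E(u_*) = \tfrac12\int_\R 16k^4\sech^4(kx)\,dx = \tfrac{32}{3}k^3$, which is (\ref{E-front}); and using in addition $\int_\R \sech^6 = \tfrac{16}{15}$ together with the identity $v_x^2 + v^3 = 64k^6\bke{\sech^4(kx) - 2\sech^6(kx)}$ I get $\int_\R(v_x^2+v^3)\,dx = -\tfrac{256}{5}k^5$, hence $R(u_*) = \tfrac{256}{5}k^5$. Substituting $k = (\tfrac{3}{32}\mathcal{E})^{1/3}$ turns this into $\tfrac{3^{5/3}}{5\cdot 2^{1/3}}\mathcal{E}^{5/3}$, which is (\ref{R-front}).

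The hard part is attainment: unlike the compact circle, the constraint sphere $\{\tfrac12\|v\|_{L^2}^2 = \mathcal{E}\}$ in $H^1(\R)$ is not weakly compact, and translation invariance of both $R$ and the constraint means a maximizing sequence could a priori vanish or split. I would first rule out collapse and spreading by a scaling analysis: under $v_\sigma(x) = \sigma^{1/2}v(\sigma x)$ the constraint is preserved, $\int_\R v_\sigma^2 = \int_\R v^2$, while $\int_\R(v_{\sigma,x}^2 + v_\sigma^3) = \sigma^2\int_\R v_x^2 + \sigma^{1/2}\int_\R v^3$, which for $\int_\R v^3 < 0$ has a unique interior minimizing scale $\sigma_* \in (0,\infty)$, excluding both $\sigma\to 0$ and $\sigma\to\infty$. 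Ruling out the remaining dichotomy by a concentration-compactness argument would then recover a genuine maximizer (translation fixed here by oddness). Alternatively, and more in the spirit of the proof of Theorem~\ref{theorem-local-bound}, one can bypass compactness entirely: every $H^1(\R)$ critical point must be the unique homoclinic found above, $R(u_*) > 0$, and $R$ is bounded from above on the constraint set by $|R| \le \tfrac32 \mathcal{E}^{5/3}$, so the explicit front is necessarily the maximizer.
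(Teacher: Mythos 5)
Your proposal is correct and follows essentially the same route as the paper: introduce $v=u_x$, form the single-multiplier functional $J(v)=\int_\R (v_x^2+v^3+\lambda v^2)\,dx$, identify the unique $H^1(\R)$ solution of the Euler--Lagrange equation as the soliton $v(x)=-4k^2\,{\rm sech}^2(kx)$ with $\lambda=4k^2$, integrate to get $u_*(x)=-4k\tanh(kx)$, and evaluate the ${\rm sech}^4$ and ${\rm sech}^6$ integrals to obtain (\ref{E-front}) and (\ref{R-front}). Your added discussion of attainment (the scaling/concentration-compactness sketch, or the fallback that $R$ is bounded above on the constraint set while the unique critical point has $R>0$) supplies a point the paper's proof leaves implicit, and your first-integral argument forcing $I=0$ and $\lambda>0$ is a clean justification of the paper's assertion that the soliton is the only $H^1(\R)$ solution.
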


\begin{proof}
The constrained maximization problem (\ref{max-problem-line}) for $v = u_x \in H^1(\R)$
yields the functional,
$$
J(v) = \int_{\R} \left( v_x^2 + v^3 + \lambda v^2 \right) dx,
$$
where $\lambda \in \R$ is the Lagrange multiplier. The Euler--Lagrange equations give
$$
2 v''(x) = 3 v^2(x) + 2 \lambda v(x), \quad x \in \R
$$
for which the only solution $v \in H^1(\R)$ is the soliton,
$$
v(x) = -4 k^2 {\rm sech}^2(kx), \quad \lambda = 4 k^2,
$$
where $k > 0$ is arbitrary. Integrating $v(x)$ with respect to $x$, we obtain (\ref{front}).
Integrating $v^2$, $v^3$, and $v_x^2$ over $\R$, we obtain (\ref{E-front}) and (\ref{R-front}).
\end{proof}

\begin{remark}
The Burgers equation (\ref{Burgers-line}) subject to the boundary conditions (\ref{boundary-conditions})
with $U_{\infty} = 4k$ admits the viscous shock solution,
\begin{equation}
\label{exact-front}
u_{\infty}(x) = - 4k \tanh(4k x), \quad k > 0,
\end{equation}
which yields $R(u_{\infty}) = 0$ and $E(u_{\infty}) = 4 E(u_*)$.
\end{remark}

\begin{remark}
Using the self-similar variables,
\begin{equation}
\label{variables-hear}
u(x,t) = -4k w(\xi,\tau), \quad \xi = 4 k x, \quad \tau = 16 k^2 t,
\end{equation}
the Burgers equation (\ref{Burgers-line}) can be written in the form,
\begin{equation}
\label{Burgers-line-variable-w}
\left\{ \begin{array}{l} w_{\tau} = 2 w w_{\xi} + w_{\xi \xi}, \quad \xi \in \R, \;\; \tau \in \R_+, \\
w|_{\tau = 0} = w_0, \quad \quad \quad \;\; \xi \in \R, \end{array} \right.
\end{equation}
where $u_0(x) = -4 k w_0(\xi)$. If the boundary conditions (\ref{boundary-conditions})
are imposed with $U_{\infty} = 4k$, then $w$ satisfies the boundary conditions
$\lim_{\xi \to \pm \infty} w(\xi,\tau) = \pm 1$.
\end{remark}

\subsection{Initial condition with $l = k$}

Let us consider the time evolution of the Burgers equations (\ref{Burgers-line}) and
(\ref{Burgers-line-variable-w}) starting with the initial condition,
\begin{equation}
\label{initial-data-inf}
u_0(x) = -4 k \tanh(kx), \quad \Rightarrow \quad w_0(\xi) = \tanh(\xi/4),
\end{equation}
which is a local maximizer (\ref{front}) in Lemma \ref{lemma-R-infty}.
Note that the initial-value problem (\ref{Burgers-line-variable-w}) with
initial data (\ref{initial-data-inf}) is independent of parameter $k > 0$.
We shall prove the following.

\begin{lemma}
For any integer $m \geq 0$, there is a constant $C_m > 0$ such that
a unique solution of the initial-value problem (\ref{Burgers-line-variable-w})
with initial data (\ref{initial-data-inf}) satisfies
\begin{equation}
\label{asymptotic-solution}
\sup_{\xi \in \R} \left| e^{|\xi|/2} \partial_{\xi}^m \left( w(\xi,\tau) - \tanh(\xi) \right)
\right| \leq C_m e^{-3 \tau/4}, \quad \tau \in \R_+.
\end{equation}
\label{lemma-instant-solution}
\end{lemma}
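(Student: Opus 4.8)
The plan is to exploit the Cole--Hopf transformation (as announced for Sections 5 and 6), which renders the whole computation explicit. Setting $w = \partial_\xi \log\phi$ turns $w_\tau = 2ww_\xi + w_{\xi\xi}$ into the linear heat equation $\phi_\tau = \phi_{\xi\xi}$; since the initial datum (\ref{initial-data-inf}) satisfies $w_0(\xi) = \tanh(\xi/4) = \partial_\xi\log\cosh^4(\xi/4)$, the relevant Cauchy datum is $\phi_0(\xi) = \cosh^4(\xi/4)$, which grows only like $e^{|\xi|}$ and therefore determines $\phi$, and hence the unique solution $w$ of (\ref{Burgers-line-variable-w}), uniquely. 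Expanding $\cosh^4(\xi/4) = \frac{1}{16}\left(e^\xi + 4e^{\xi/2} + 6 + 4e^{-\xi/2} + e^{-\xi}\right)$ and applying the heat semigroup mode by mode via $e^{a\eta}\mapsto e^{a\xi + a^2\tau}$, I obtain the closed form
\begin{equation*}
\phi(\xi,\tau) = \frac{e^\tau}{8}\left[\cosh\xi + 4 e^{-3\tau/4}\cosh(\xi/2) + 3 e^{-\tau}\right].
\end{equation*}

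Next I would note that the $\xi$-independent prefactor $e^\tau/8$ is annihilated by $\partial_\xi\log$, so that $w = \partial_\xi\log\left[\cosh\xi\,(1+g)\right]$ with
\begin{equation*}
g(\xi,\tau) = \frac{4 e^{-3\tau/4}\cosh(\xi/2) + 3 e^{-\tau}}{\cosh\xi} \ge 0.
\end{equation*}
Because $\tanh\xi = \partial_\xi\log\cosh\xi$, this gives the exact identity $w(\xi,\tau) - \tanh\xi = \partial_\xi\log(1+g)$, which already exhibits the mechanism: the viscous shock $w_\infty = \tanh\xi$ is the dominant $\cosh\xi$ mode of the heat solution, and the rate $e^{-3\tau/4}$ is precisely the spectral gap $1 - \tfrac14 = \tfrac34$ between the exponents $a^2 = 1$ of $e^{\pm\xi}$ and $a^2 = \tfrac14$ of $e^{\pm\xi/2}$.

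The estimate then reduces to controlling $g$ and its $\xi$-derivatives. Using the elementary uniform bounds $\left|\partial_\xi^j\!\left(\cosh(\xi/2)/\cosh\xi\right)\right| \le C_j e^{-|\xi|/2}$ and $\left|\partial_\xi^j\!\left(1/\cosh\xi\right)\right| \le C_j e^{-|\xi|}$ on all of $\R$, together with $e^{-\tau}\le e^{-3\tau/4}$, every derivative satisfies $\left|\partial_\xi^j g\right| \le C_j\, e^{-3\tau/4}\, e^{-|\xi|/2}$ uniformly in $(\xi,\tau)$. For $m = 0$ this is immediate from $w - \tanh\xi = g_\xi/(1+g)$ and $1 + g \ge 1$. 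For general $m$ I would write $\partial_\xi^m(w - \tanh\xi) = \partial_\xi^{m+1}\log(1+g)$ and expand by the Fa\`a di Bruno formula into a finite sum of terms of the form $(1+g)^{-r}\prod_i \partial_\xi^{j_i} g$ with $\sum_i j_i = m+1$ and $1\le r\le m+1$; since $1+g\ge 1$ and each factor is $\mathcal{O}\!\left(e^{-3\tau/4}e^{-|\xi|/2}\right)$, the linear ($r=1$) contribution dominates and the whole sum is bounded by $C_m\, e^{-3\tau/4}\, e^{-|\xi|/2}$. Multiplying through by $e^{|\xi|/2}$ yields (\ref{asymptotic-solution}).

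The computation is entirely explicit, so the only real work is technical rather than conceptual: obtaining the weighted derivative bounds on $g$ uniformly over the whole line (not merely asymptotically as $|\xi|\to\infty$, but also across the transition region $\xi = \mathcal{O}(1)$), and organizing the arbitrary-order derivatives cleanly through the Fa\`a di Bruno expansion so that the single rate $e^{-3\tau/4}$ and weight $e^{-|\xi|/2}$ survive for every $m$. I expect this bookkeeping, rather than any delicate analysis, to be the main obstacle.
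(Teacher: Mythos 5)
Your proposal is correct and follows essentially the same route as the paper: Cole--Hopf reduces the problem to the explicitly solvable heat equation with datum $\cosh^4(\xi/4)$, the dominant $e^{\tau}\cosh\xi$ mode is factored out, and the remainder decays like $e^{-3\tau/4}$ with weight $e^{-|\xi|/2}$ (your $g_\xi/(1+g)$ is exactly the paper's $\tilde{w}$ in (\ref{approx-solution-w-2})). Your organization of the general-$m$ case via $\partial_\xi^{m}(w-\tanh\xi)=\partial_\xi^{m+1}\log(1+g)$ and Fa\`a di Bruno is a slightly cleaner way to make explicit what the paper leaves to "exponential decay in $\xi$ and $\tau$," but it is the same argument.
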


\begin{proof}
Using the Cole--Hopf transformation \cite{Cole,Hopf}, the Burgers equation (\ref{Burgers-line-variable-w})
with initial data (\ref{initial-data-inf}) admits the exact solution,
\begin{equation}
\label{Cole-Hopf}
w(\xi,\tau) = \frac{\psi_{\xi}(\xi,\tau)}{\psi(\xi,\tau)},
\end{equation}
where $\psi(\xi,\tau) > 0$ is a solution of the heat equation $\psi_{\tau} = \psi_{\xi \xi}$
on the real line $\R$ with the initial condition $\psi(\xi,0) = \cosh^4(\xi/4)$.
This exact solution exists in the explicit form,
\begin{equation}
\label{exact-solution-psi}
\psi(\xi,\tau) = \frac{1}{8} \left[ 3 + 4 \cosh(\xi/2) e^{\tau/4} + \cosh(\xi) e^{\tau} \right],
\quad \xi \in \R, \;\; \tau \in \R_+.
\end{equation}

As $\tau \to \infty$, the last term in (\ref{exact-solution-psi}) dominates and the solution
converges in $L^{\infty}$ norm to the viscous shock $w_{\infty}(\xi) = \tanh(\xi)$. To
prove this convergence, we rewrite (\ref{exact-solution-psi}) in the form,
\begin{equation}
\label{representatioin-1}
\psi(\xi,\tau) = \frac{1}{8} e^{\tau} \cosh(\xi) \left[
1 + 4 \cosh(\xi/2) {\rm sech}(\xi) e^{-3 \tau/4} +  3 {\rm sech}(\xi) e^{-\tau} \right].
\end{equation}
This representation and the Cole--Hopf transformation (\ref{Cole-Hopf})
yield the compact expression,
\begin{equation}
\label{compact-form}
w(\xi,\tau) = \tanh(\xi) + \tilde{w}(\xi,\tau),
\end{equation}
where
\begin{equation}
\label{approx-solution-w-2}
\tilde{w} = e^{-3 \tau/4} {\rm sech}(\xi)
\frac{2 \sinh(\xi/2) - 4 \cosh(\xi/2) \tanh(\xi) - 3 \tanh(\xi) e^{-\tau/4}}{1 +
4 \cosh(\xi/2) {\rm sech}(\xi) e^{- 3 \tau/4} + 3 {\rm sech}(\xi) e^{-\tau}}.
\end{equation}
The bound (\ref{asymptotic-solution}) follows from (\ref{approx-solution-w-2})
thanks to the exponential decay in $\xi$ and $\tau$.
\end{proof}

\begin{figure}
\begin{center}
  \includegraphics[width=0.5\textwidth]{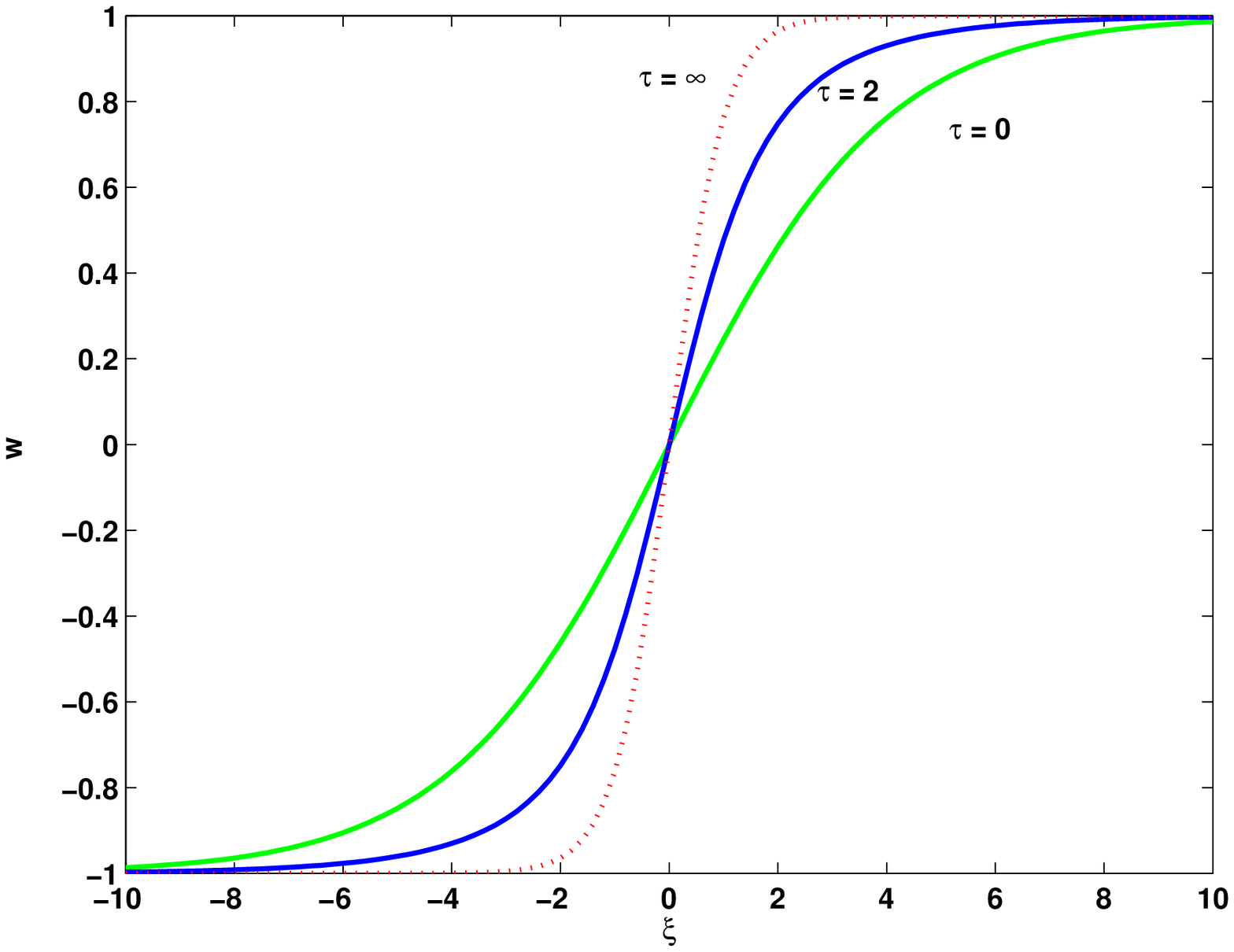}\\
  \includegraphics[width=0.5\textwidth]{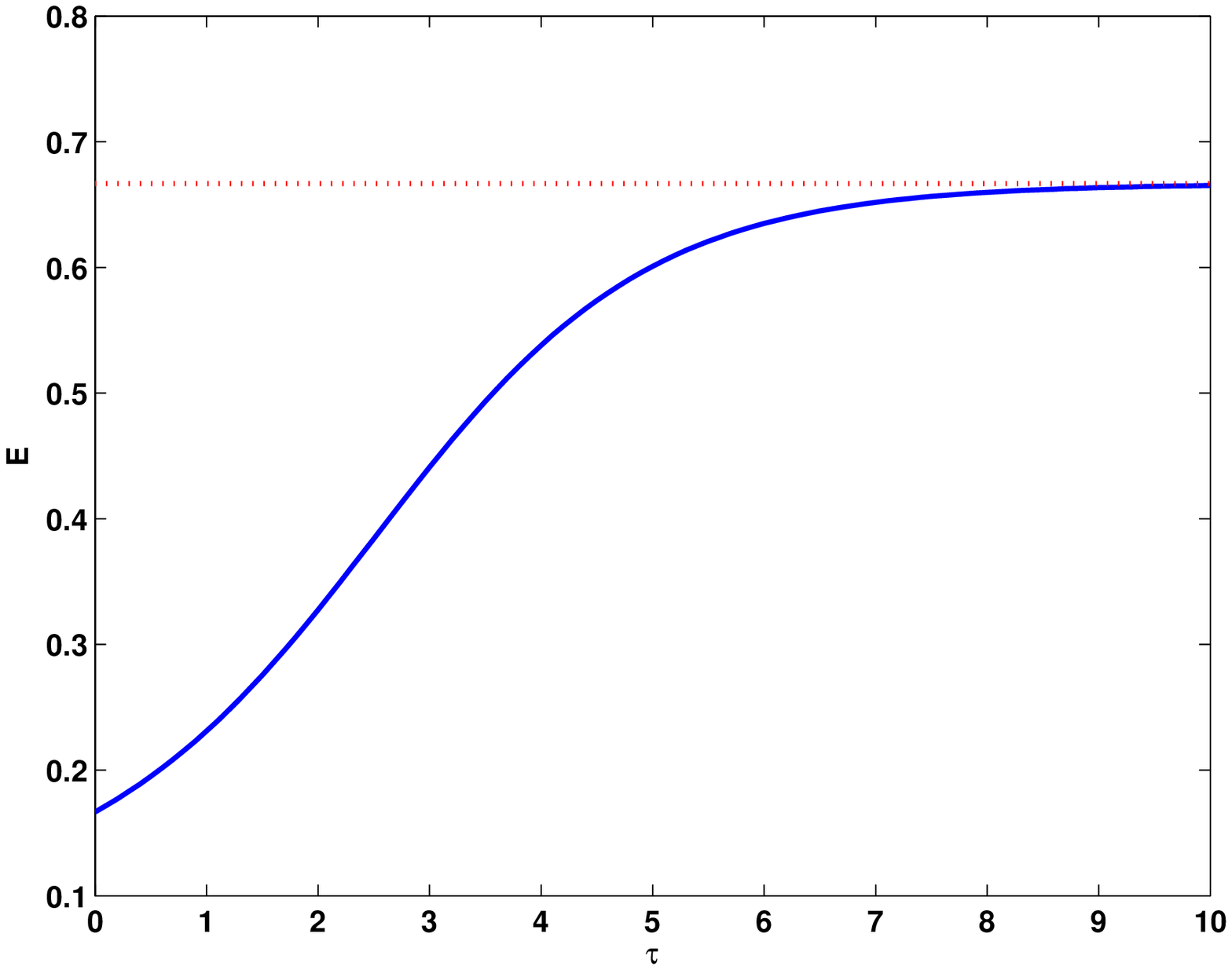}\\
  \includegraphics[width=0.5\textwidth]{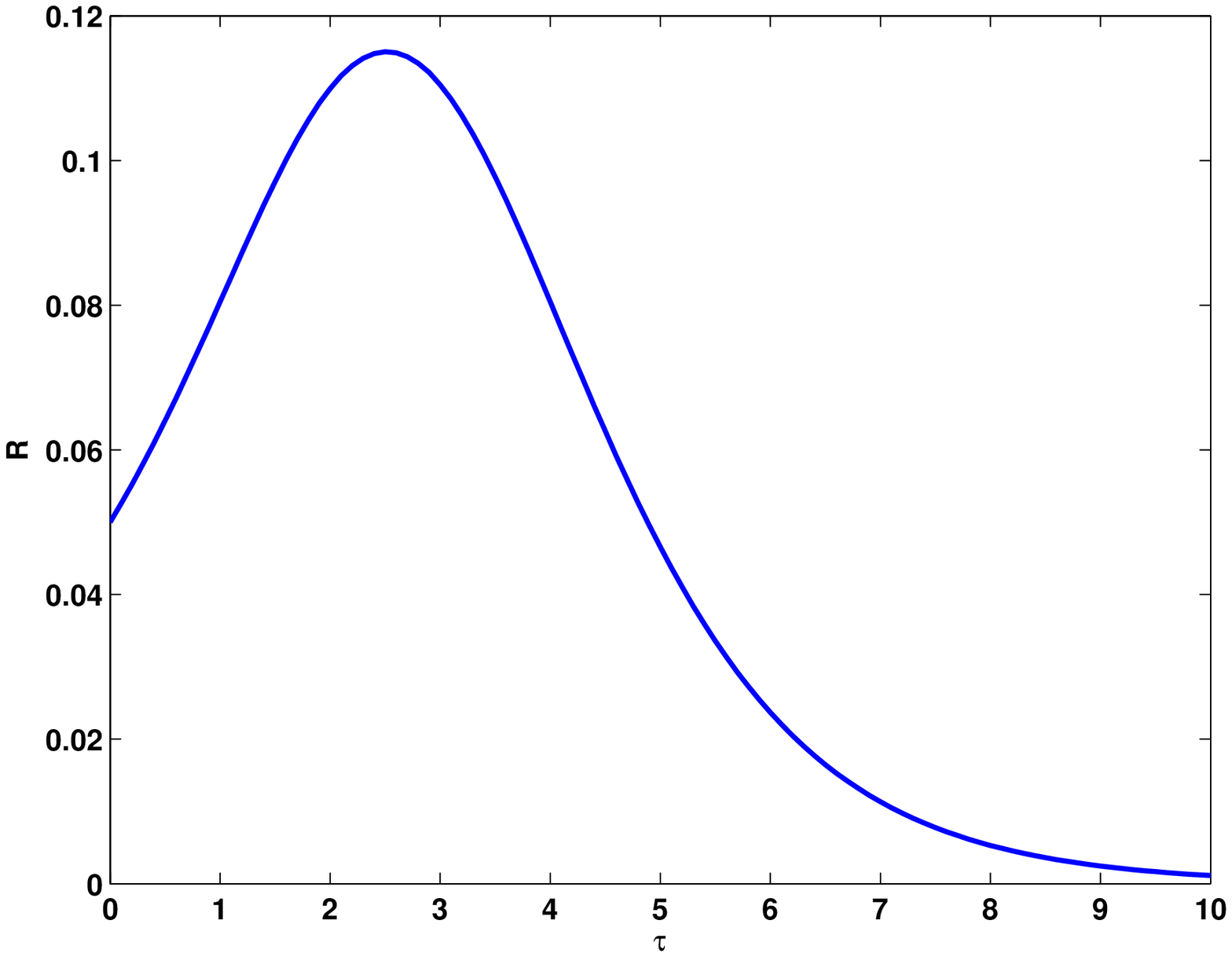}
\end{center}
\caption{Top: The exact solution (\ref{compact-form})--(\ref{approx-solution-w-2})
for different values of $\tau$ and in the limit $\tau \to \infty$.
Middle: Enstrophy $E$ versus time $\tau$. The dotted
curve shows the value of $E$ at the viscous shock $w_{\infty}(\xi) = \tanh(\xi)$.
Bottom: Enstrophy's rate of change $R$ versus time $\tau$.}
\label{figure-2}
\end{figure}

\begin{corollary}
Fix $\delta > 0$. There exist $K > 0$ and $C > 0$ such that
for all $k \geq K$, we have
\begin{equation}
\label{bound-asymptotic-solution}
\sup_{x \in \R} \left| u(x,t) - u_{\infty}(x) \right| \leq \frac{C}{k^{\delta}}, \quad
\mbox{\rm for all} \;\; t \geq T_* := \frac{(1 + \delta) \log(k)}{12 k^2}.
\end{equation}
\label{corollary-1}
\end{corollary}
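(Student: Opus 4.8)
The plan is to read off (\ref{bound-asymptotic-solution}) directly from Lemma~\ref{lemma-instant-solution} by undoing the self-similar scaling (\ref{variables-hear}); this is a genuine corollary, so the work is bookkeeping of exponents rather than new analysis. First I would specialize (\ref{asymptotic-solution}) to $m=0$ and discard the weight using $e^{|\xi|/2}\geq 1$, which gives the unweighted bound
\begin{equation}
\sup_{\xi\in\R}\left|w(\xi,\tau)-\tanh(\xi)\right|\leq C_0\,e^{-3\tau/4},\quad \tau\in\R_+,
\end{equation}
with $C_0$ the constant from the lemma. This is exactly the convergence of the rescaled solution (starting from $w_0(\xi)=\tanh(\xi/4)$ in (\ref{initial-data-inf})) to the viscous shock $\tanh(\xi)$.

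Next I would transfer this estimate to the physical variables. Since $u(x,t)=-4k\,w(\xi,\tau)$ and, under $\xi=4kx$, the shock (\ref{exact-front}) is $u_\infty(x)=-4k\tanh(4kx)=-4k\tanh(\xi)$, the two amplitude factors coincide and
\begin{equation}
\sup_{x\in\R}\left|u(x,t)-u_\infty(x)\right|=4k\sup_{\xi\in\R}\left|w(\xi,\tau)-\tanh(\xi)\right|\leq 4k\,C_0\,e^{-3\tau/4}=4k\,C_0\,e^{-12k^2 t},
\end{equation}
where the last step uses $\tau=16k^2 t$.

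The one point deserving attention—what passes for the ``obstacle'' in an otherwise routine argument—is that the change of variables multiplies the exponentially small weighted estimate by the prefactor $4k$, which a priori could destroy smallness as $k\to\infty$. The threshold $T_*$ is chosen precisely to defeat this: at $t=T_*=\frac{(1+\delta)\log(k)}{12k^2}$ one has $12k^2 t=(1+\delta)\log(k)$, hence $e^{-12k^2 t}=k^{-(1+\delta)}$, so the prefactor collapses and $4k\,C_0\,k^{-(1+\delta)}=4C_0\,k^{-\delta}$. Because the right-hand side $4k\,C_0\,e^{-12k^2 t}$ is monotonically decreasing in $t$, its supremum over $t\geq T_*$ is attained at $t=T_*$; thus (\ref{bound-asymptotic-solution}) holds with $C=4C_0$ and any $K>1$ (ensuring $T_*>0$). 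I would emphasize in the write-up that $T_*$ is the exact time at which the exponential relaxation of the rescaled shock overtakes the $\mathcal{O}(k)$ amplification inherent in the scaling $u=-4k\,w$, which is why this time scale reappears in Theorem~\ref{theorem-time}.
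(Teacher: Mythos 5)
Your proposal is correct and is exactly the paper's argument: the paper's proof is the one-line statement that the bound follows from (\ref{asymptotic-solution}) with $m=0$ after the transformation (\ref{variables-hear}), and you have simply written out the bookkeeping ($e^{-3\tau/4}=e^{-12k^2t}$, prefactor $4k$, evaluation at $T_*$) explicitly. The observation that $T_*$ is tuned so the exponential decay beats the $\mathcal{O}(k)$ amplification is the right way to present it.
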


\begin{proof}
Bound (\ref{bound-asymptotic-solution}) follows from (\ref{asymptotic-solution})
with $m = 0$ after the transformation (\ref{variables-hear}).
\end{proof}

\begin{remark}
We actually have shown convergence of $u$ to $u_{\infty}$ in $H^s$ norm
for any $s \geq 0$, which follows from the exponential decay in $\xi$ and $\tau$.
\end{remark}

Figure \ref{figure-2} shows the exact solution $w(\xi,\tau)$ for different values of $\tau$
together with the rescaled enstrophy $E = \int_0^{\infty} w_{\xi}^2 d\xi$
and its rate of change $R = 2 \int_0^{\infty} (w_{\xi}^3 - w_{\xi \xi}^2) d\xi$ versus $\tau$.
The relevant integrals are approximated by using the MATLAB \verb"quad" function.
The enstrophy $E$ is a monotonically increasing function
to the value $E_{\infty} = \frac{2}{3}$, whereas
the rate of change $R$ is initially increasing and then decreasing
to $R_{\infty} = 0$. Note that by Lemma \ref{lemma-instant-solution},
we have exponentially rapid convergence $\| w(\cdot,\tau) - w_{\infty} \|_{H^1} \to 0$ as
$\tau \to \infty$.

If ${\cal E} = E(u_0) = \frac{32}{3} k^3$,
then bound (\ref{bound-asymptotic-solution}) shows that
\begin{equation}
\label{growth-1}
E(u_{\infty}) = \mathcal{O}(\mathcal{E}) \quad \mbox{\rm and} \quad
T_* = \mathcal{O}(\mathcal{E}^{-2/3} \log(\mathcal{E})) \quad \mbox{\rm as} \quad \mathcal{E} \to \infty.
\end{equation}
The asymptotic result (\ref{growth-1})
is included in the scaling law (\ref{arg-max}) of Theorem \ref{theorem-time}.
We have not achieved the maximal increase of the enstrophy with the
initial data (\ref{initial-data-inf}) given by the instantaneous maximizer of Lemma \ref{lemma-R-infty}.
To achieve the maximal growth of the enstrophy, we shall modify the initial data $u_0$.

\subsection{Initial condition with arbitrary $l$}

Let us consider a more general initial condition,
\begin{equation}
\label{initial-data-inf-modified}
u_0(x) = -4 k \tanh(l x), \quad \Rightarrow \quad w_0(\xi) = \tanh(\xi/a),
\end{equation}
where $a = \frac{4k}{l}$ is the only parameter of the initial-value problem
(\ref{Burgers-line-variable-w}) with initial data
(\ref{initial-data-inf-modified}). For analysis needed in the proof of
Theorem \ref{theorem-time}, we need convergence of the solution $w(\xi,\tau)$ to
the viscous shock $w_{\infty}(\xi) = \tanh(\xi)$ in the $H^1$-norm
as the time $\tau$ gets large, and the smallness of $w(\xi,\tau) - w_{\infty}(\xi)$ at large values of
$\xi$ for all times $\tau \geq 0$. Both objectives are achieved in the following result.

\begin{lemma}
\label{lemma-Burgers-solution}
Fix $\delta > 0$. There are constants $A > 0$ and $C > 0$ such that
for all $a \geq A$, a unique solution of the initial-value problem
(\ref{Burgers-line-variable-w}) with initial data
(\ref{initial-data-inf-modified}) satisfies
\begin{equation}
\label{main-bound-line-infty-norm}
\sup_{\xi \in \R} \left| w(\xi,\tau) - \tanh(\xi) \right|
\leq \frac{C}{a^{3 \delta + \delta^2} \tau^{1/2}}
\quad \mbox{\rm for all} \quad \tau \geq  \frac{1}{2} (1 + \delta)^2 a \log(a)
\end{equation}
Furthermore, for any integer $m \geq 0$, there are constants $A_m > 0$ and $C_m > 0$, such that
for all $a \geq A_m$, we have
\begin{equation}
\label{main-bound-line-infty-boundary}
\left| \partial_{\xi}^m \left( w(\xi,\tau) - \tanh(\xi) \right) \right| \leq \frac{C_m}{a^{1 + m + \delta}}
\quad \mbox{\rm for all} \quad |\xi| \geq \frac{1}{2}(1 + \delta)^2 a \log(a)
\quad \mbox{\rm and} \quad \tau \geq 0.
\end{equation}
\end{lemma}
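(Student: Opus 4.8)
The plan is to proceed exactly as in Lemma \ref{lemma-instant-solution}, through the Cole--Hopf transformation (\ref{Cole-Hopf}). Writing $w = \psi_\xi/\psi$ turns the rescaled Burgers equation (\ref{Burgers-line-variable-w}) into the heat equation $\psi_\tau = \psi_{\xi\xi}$, and the datum $w_0(\xi) = \tanh(\xi/a)$ from (\ref{initial-data-inf-modified}) lifts (up to an irrelevant multiplicative constant) to $\psi(\xi,0) = \cosh^a(\xi/a)$. Representing $\psi$ by the heat kernel and integrating by parts once to move $\partial_\xi$ onto the datum gives
\begin{equation*}
\psi(\xi,\tau) = \frac{1}{\sqrt{4\pi\tau}}\int_{\R} e^{-(\xi-y)^2/4\tau}\cosh^a(y/a)\,dy, \quad \psi_\xi(\xi,\tau) = \frac{1}{\sqrt{4\pi\tau}}\int_{\R} e^{-(\xi-y)^2/4\tau}\tanh(y/a)\cosh^a(y/a)\,dy.
\end{equation*}
Since $\tanh(\xi)$ is the logarithmic derivative of $\cosh(\xi)$, these combine into the exact quotient identity
\begin{equation*}
w(\xi,\tau) - \tanh(\xi) = \frac{\dis\int_{\R} e^{-(\xi-y)^2/4\tau}\bigl(\tanh(y/a)-\tanh(\xi)\bigr)\cosh^a(y/a)\,dy}{\dis\int_{\R} e^{-(\xi-y)^2/4\tau}\cosh^a(y/a)\,dy},
\end{equation*}
which exhibits $w-\tanh(\xi)$ as the average of $\tanh(y/a)-\tanh(\xi)$ against the probability measure $d\mu_{\xi,\tau}(y) \propto e^{h(y)}\,dy$ with phase $h(y) = -(\xi-y)^2/(4\tau) + a\log\cosh(y/a)$. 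This identity, rather than the spectral theory of the linearization about $w_\infty = \tanh(\xi)$ in (\ref{exact-front-attractor}), will drive both estimates.

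To prove (\ref{main-bound-line-infty-norm}) I would run the Laplace method on $\mu_{\xi,\tau}$ in the regime $\tau \ge \tfrac12(1+\delta)^2 a\log(a) \gg a$. From $h'(y) = (\xi-y)/(2\tau) + \tanh(y/a)$ the phase has, for moderate $\xi$, two nondegenerate maxima at $y_\pm \approx \xi \pm 2\tau$, each of Gaussian width of order $\sqrt{\tau}$; using $\cosh^a(y/a) = 2^{-a}e^{|y|}\bigl(1+\mathcal{O}(e^{-2|y|/a})\bigr)$ at the peaks, a Gaussian evaluation yields masses $W_\pm \propto e^{\pm\xi}$. Because $\tanh(y_\pm/a)\approx\pm1$, the weighted average reproduces $(W_+-W_-)/(W_++W_-) = \tanh(\xi)$ and the leading order cancels in the quotient. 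The residual comes from replacing $\tanh(y_\pm/a)$ by $\pm1$ and $\cosh^a(y/a)$ by $2^{-a}e^{|y|}$, and a short computation shows the leading correction equals $-2e^{-4\tau/a}\,\sinh(\xi(1-2/a))/\cosh(\xi)$, which is bounded by $2e^{-4\tau/a}$ \emph{uniformly} in $\xi$. Each such factor is traded for a power of $a$ via $e^{-c\tau/a}\le a^{-c(1+\delta)^2/2}$ on $\tau\ge\tfrac12(1+\delta)^2 a\log a$; since the true correction decays exponentially in $\tau/a$ uniformly in $\xi$, the (deliberately non-sharp) bound (\ref{main-bound-line-infty-norm}), with its $\tau^{-1/2}$ factor and exponent $3\delta+\delta^2$, follows for a suitable constant $C$.

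For the uniform-in-time far-field estimate (\ref{main-bound-line-infty-boundary}) I would not use large-$\tau$ concentration. For $\xi \ge \tfrac12(1+\delta)^2 a\log(a)$ both $w$ and $\tanh(\xi)$ are pinned near $+1$, and the deviation is governed by the initial data: at $\tau=0$ one has $w-\tanh(\xi) = \tanh(\xi/a)-\tanh(\xi)$, which at $\xi = \tfrac12(1+\delta)^2 a\log a$ is $\mathcal{O}(a^{-(1+\delta)^2}) = \mathcal{O}(a^{-(1+\delta)})$, and is smaller for larger $\xi$. A comparison (maximum-principle) argument, cleanest on $1-w = (\psi-\psi_\xi)/\psi$ with the nonnegative datum $(1-\tanh(y/a))\cosh^a(y/a)$, shows this far-field deviation does not grow in $\tau$, giving the bound for all $\tau\ge0$. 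The derivative bounds follow by differentiating the Cole--Hopf quotient: in this regime the deviation behaves like $e^{-2\xi/a}$, so each $\partial_\xi$ contributes a factor of order $a^{-1}$, producing $a^{-(1+m+\delta)}$; the case $\xi \le -\tfrac12(1+\delta)^2 a\log a$ is symmetric.

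The hard part will be making the Laplace analysis of the second step \emph{uniform in} $\xi\in\R$ and $\tau$ above the threshold, with explicit tracking of the $a$-dependence. The clean two-peak picture degenerates as $\xi$ increases through $\pm2\tau$: the subdominant saddle $y_\mp = \xi\mp2\tau$ migrates toward the origin, where $\tanh(y/a)$ is no longer close to $\mp1$ and eventually coalesces with the interior minimum of $h$ (a fold at which the Gaussian evaluation fails and an Airy-type remainder must be controlled). One must verify that throughout this bimodal-to-unimodal transition — and in particular that the exponentially small subdominant mass keeps its contribution within the stated bound — the error never exceeds $Ca^{-(3\delta+\delta^2)}\tau^{-1/2}$, simultaneously for every $\xi$ and every admissible $\tau$. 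Establishing this uniform control, together with a rigorous justification of the saddle-point asymptotics for non-integer $a$, is the delicate quantitative core; once the transition regime is tamed, the remaining estimates are routine Laplace-method bookkeeping.
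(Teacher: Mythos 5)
Your overall route coincides with the paper's: Cole--Hopf, the heat-kernel representation of $\psi$ with datum $\cosh^a(y/a)$, and cancellation of the two dominant exponential modes against $\tanh(\xi)$. Your exact averaging identity for $w-\tanh(\xi)$ is correct and is an attractive reformulation of the paper's decomposition (\ref{exact-sol-hear-better}). But there is a genuine gap, and you have located it yourself: the entire content of (\ref{main-bound-line-infty-norm}) is the \emph{uniformity} in $\xi\in\R$ and in $\tau$ above the threshold of the error estimate, with explicit $a$-dependence, and that is precisely the step you defer as ``the delicate quantitative core.'' The quantitative input you do supply --- a ``short computation'' giving a leading correction $-2e^{-4\tau/a}\sinh(\xi(1-2/a))/\cosh(\xi)$ --- is asserted rather than derived, its prefactor is wrong (the subdominant mode of $\cosh^a(y/a)$ carries a coefficient of order $a$, as one sees already at $a=4$ where the correction to $\psi$ is $4\cosh(\xi/2)e^{\tau/4}$ against $\cosh(\xi)e^{\tau}$), and it accounts for only one mode; what must be controlled is the full remainder, uniformly through the regime where the saddles $y_\pm\approx\xi\pm 2\tau$ degenerate, and for non-integer $a$ (which is the relevant case, since $a=4k/l$ with $l=(1+\Delta)\log k$). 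The paper sidesteps the saddle-coalescence problem entirely: it factors the datum as $2^{-a}e^{|y|}\phi_a(y)$ with $\phi_a(y)=(1+e^{-2|y|/a})^a$ (defined for all real $a>0$), evaluates the heat evolution of $e^{|y|}$ exactly as $e^{\tau}\cosh(\xi)$ times error-function integrals, and splits those integrals at the single threshold $z_a$ separating the region where $0\le\phi_a-1\le Cae^{-2|\cdot|/a}$ from the tail where only $\phi_a\le 2^a$ is available but the Gaussian weight is superexponentially small (see (\ref{splitting-psi}) and (\ref{bound-phi-a})). Everything then reduces to monotone Gaussian-tail estimates and the elementary monotonicity of $\sqrt{\tau}\,e^{-4\tau/a}$ for $\tau>a/8$; no fold or Airy analysis is ever needed. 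If you want to complete your argument, you should adopt this algebraic decomposition rather than a literal Laplace method.

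The far-field bound (\ref{main-bound-line-infty-boundary}) also has a soft spot. A maximum principle for $1-w$ on the half-line $\xi\ge\frac12(1+\delta)^2a\log(a)$ requires control on the lateral parabolic boundary $\xi=\frac12(1+\delta)^2a\log(a)$, $\tau>0$, where no a priori bound on $1-w$ is available --- that is part of what is being proved --- so the argument as stated is circular; and the global statement that $\psi-\psi_\xi\ge 0$ only yields $w\le 1$, not a localized rate. What actually works, and is what the paper does, is to estimate the heat-kernel representations of $\psi_\pm$ and $\partial_\xi^m\psi_\pm$ directly for $|\xi|\ge(1+\delta)\xi_a$ and all $\tau\ge 0$; the extra factor $(1+\delta)$ in the spatial threshold is needed precisely so that $z_a$ stays bounded below by $\sqrt{\delta(1+\delta)a\log(a)}$ for \emph{all} $\tau\ge0$, not just large $\tau$. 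Your heuristic that each $\partial_\xi$ costs a factor $a^{-1}$ because the deviation scales like $e^{-2\xi/a}$ is correct and is indeed how the exponent $1+m+\delta$ arises.
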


\begin{proof}
Using the Cole--Hopf transformation (\ref{Cole-Hopf}),
we rewrite the initial-value problem (\ref{Burgers-line-variable-w})
with initial data (\ref{initial-data-inf-modified}) in the form,
\begin{equation}
\label{Burgers-heat}
\left\{ \begin{array}{l} \psi_{\tau} = \psi_{\xi \xi}, \quad \quad \quad \quad \quad \;\;
\xi \in \R, \quad \tau \in \R_+, \\ \psi|_{\tau = 0} = \cosh^a(\xi/a), \quad \xi \in \R.
\end{array} \right.
\end{equation}
The initial data can be represented in the form,
$$
\psi|_{\tau = 0} = \frac{1}{2^a} e^{|\xi|} \phi_a(\xi), \quad \phi_a(\xi) = (1 + e^{-2|\xi|/a})^a.
$$

The function $\phi_a$ is bounded by its value at $\xi = 0$,
\begin{equation}
\label{bound-phi-a-infty}
1 \leq \phi_a(\xi) \leq 2^a, \quad \xi \in \R,
\end{equation}
but the upper bound diverges quickly as $a \to \infty$. On the other hand,
for large values of $\xi$, the function $\phi_a$ decays to $1$ exponentially rapidly.

Fix $\delta > 0$ and define $\xi_a := \frac{1+\delta}{2} a \log(a)$.
For sufficiently large $a \gg 1$, there is $C > 0$ such that
\begin{equation}
\label{bound-phi-a}
0 \leq \phi_a(\xi) - 1 = e^{a \log(1 + e^{-2|\xi|/a})} - 1 \leq C a e^{-2|\xi|/a}
\leq \frac{C}{a^{\delta}}, \quad |\xi| \geq \xi_a.
\end{equation}
The upper bound in (\ref{bound-phi-a}) converges to zero as $a \to \infty$.

To bound derivatives of $\phi_a$ in $\xi$, we note that
\begin{equation}
\label{phi-derivative}
\phi_a'(\xi) = -2 (1 + e^{-2\xi/a})^{a-1} e^{-2\xi/a}, \quad \xi > 0,
\end{equation}
which implies that
\begin{equation}
\label{bound-phi-derivative}
\sup_{\xi \in \R_+} |\phi_a'(\xi)| = 2^a, \quad
\sup_{\xi \geq \xi_a} |\phi_a'(\xi)| \leq \frac{C}{a^{1+\delta}}.
\end{equation}
Writing (\ref{phi-derivative}) as
$$
\frac{d}{d \xi} \log \phi_a(\xi) = -\frac{2 e^{-2\xi/a}}{1 + e^{-2\xi/a}}, \quad
\Rightarrow \quad \frac{d^2}{d \xi^2} \log \phi_a(\xi) = \frac{1}{a}
{\rm sech}^2\left(\frac{\xi}{a}\right), \quad \xi > 0,
$$
and using (\ref{bound-phi-a-infty}) and (\ref{bound-phi-a}), we
obtain by induction that, for any integer $m \geq 0$, there is $C_m > 0$ such that
\begin{equation}
\label{bound-phi-higher-derivative}
\sup_{\xi \in \R_+} |\partial_{\xi}^m \phi_a(\xi)| \leq C_m 2^a, \quad
\sup_{\xi \geq \xi_a} |\partial_{\xi}^m  \phi_a(\xi)| \leq \frac{C_m}{a^{m+\delta}}.
\end{equation}

The solution of the initial-value problem for the heat equation (\ref{Burgers-heat}) is
obtained explicitly as
\begin{eqnarray*}
\psi(\xi,\tau) & = &  \frac{1}{2^a \sqrt{4 \pi \tau}}
\int_{-\infty}^{\infty} e^{-\frac{(\xi - \eta)^2}{4 \tau} + |\eta|}
\phi_a(\eta) d \eta \\
& = & \frac{1}{2^a \sqrt{\pi}} e^{\tau}
\left( e^{\xi} \int_{-z_+}^{\infty} e^{-z^2}
\phi_a(\xi + 2 \tau + 2 \sqrt{\tau} z) dz +
e^{-\xi} \int_{-\infty}^{z_-} e^{-z^2}
 \phi_a(\xi - 2 \tau + 2 \sqrt{\tau} z) dz \right),
\end{eqnarray*}
where
$$
z_{\pm} = \frac{2\tau \pm \xi}{2\sqrt{\tau}}.
$$
We shall rewrite the solution in the equivalent form,
\begin{eqnarray}
\psi(\xi,\tau) = \frac{1}{2^{a-1}} e^{\tau} \cosh(\xi) \left[ 1 + \Psi(\xi,\tau) \right],
\quad \Psi(\xi,\tau) := \frac{e^{\xi} \psi_+(\xi,\tau)
+ e^{-\xi} \psi_-(\xi,\tau)}{\sqrt{\pi} (e^{\xi} + e^{-\xi})},
\label{exact-sol-hear-better}
\end{eqnarray}
where
\begin{eqnarray}
\label{expression-psi-plus}
\psi_+(\xi,\tau) & := & \int_{-z_+}^{\infty} e^{-z^2}
\phi_a(\xi + 2 \tau + 2 \sqrt{\tau} z) dz - \sqrt{\pi}, \\
\label{expression-psi-minus}
\psi_-(\xi,\tau) & := & \int_{-\infty}^{z_-} e^{-z^2}
 \phi_a(\xi - 2 \tau + 2 \sqrt{\tau} z) dz - \sqrt{\pi}.
\end{eqnarray}
Using the Cole--Hopf transformation (\ref{Cole-Hopf}) and the explicit
representation (\ref{exact-sol-hear-better}), we write the solution
of the Burgers equation (\ref{Burgers-line-variable-w}) in the form,
\begin{equation}
\label{compact-form-new}
w(\xi,\tau) = \tanh(\xi) + \tilde{w}(\xi,\tau), \quad
\tilde{w}(\xi,\tau) = \frac{\partial_{\xi} \Psi(\xi,\tau)}{1 + \Psi(\xi,\tau)},
\end{equation}
where
\begin{eqnarray*}
\partial_{\xi} \Psi(\xi,\tau) = \frac{e^{\xi} \partial_{\xi} \psi_+(\xi,\tau) +
e^{-\xi} \partial_{\xi} \psi_-(\xi,\tau)}{\sqrt{\pi} (e^{\xi} + e^{-\xi})} +
\frac{1}{2 \sqrt{\pi}} {\rm sech}^2(\xi) (\psi_+(\xi,\tau) - \psi_-(\xi,\tau)).
\end{eqnarray*}

We first prove bound (\ref{main-bound-line-infty-norm}).
Because $\psi$ is even in $\xi$, we can consider $\xi \geq 0$
without the loss of generality. To analyze $\psi_+$, we define
$$
z_a := z_+ - \frac{\xi_a}{2 \sqrt{\tau}} =
\frac{\xi-\xi_a+2\tau}{2 \sqrt{\tau}},
$$
so that $z \geq -z_a$ corresponds to $\xi \geq \xi_a$ in the argument of $\phi_a(\xi)$.
For any $\xi \geq 0$ and $\tau \geq \tau_a := (1 + \delta) \xi_a$, we have
$$
z_a \geq \frac{2 \tau_a - \xi_a}{2 \sqrt{\tau_a}} \geq
\frac{1+2\delta}{2 \sqrt{2}} \sqrt{a \log(a)},
$$
which shows that $z_a \to \infty$ as $a \to \infty$. The term $\psi_+(\xi,\tau)$
can be split into the sum of three terms
\begin{eqnarray}
\nonumber
\psi_+(\xi,\tau) & = & \left( \int_{-z_a}^{\infty} e^{-z^2} dz - \sqrt{\pi} \right) +
\int_{-z_a}^{\infty} e^{-z^2} (\phi_a(\xi + 2 \tau + 2 \sqrt{\tau} z) - 1) dz  \\
\label{splitting-psi}
& \phantom{t} & \quad + \int_{-z_+}^{-z_a} e^{-z^2}
\phi_a(\xi + 2 \tau + 2 \sqrt{\tau} z) dz \equiv I + II + III.
\end{eqnarray}
For all $\xi \geq 0$ and $\tau \geq \tau_a$, we obtain
$$
I = \int_{-\infty}^{-z_a} e^{-z^2} dz \leq \frac{1}{2 z_a} e^{-z_a^2}
\leq \frac{\sqrt{2}}{(1+2\delta)\sqrt{a \log(a)}} e^{-\frac{(1+2\delta)^2}{8} a \log(a)}
$$
and
\begin{eqnarray*}
III \leq 2^a \int_{-z_+}^{-z_a} e^{-z^2} dz \leq
\frac{2^a}{2 z_a} e^{-z_a^2} \leq \frac{\sqrt{2}}{(1+2\delta)\sqrt{a \log(a)}}
e^{-\frac{(1+2\delta)^2}{8} a \log(a)+a \log(2)},
\end{eqnarray*}
where the upper bounds are exponentially small as $a \to \infty$ and
the bound (\ref{bound-phi-a-infty}) has been used. Using bound (\ref{bound-phi-a}), we obtain
$$
II \leq C a \int_{-z_a}^{\infty} e^{-z^2 - 2(\xi + 2 \tau + 2 \sqrt{\tau} z)/a} dz
\leq \sqrt{\pi} C a e^{-2(1 + \delta)^2 \log(a)} \leq \frac{C}{a^{1 + 4 \delta + 2\delta^2}}.
$$
Combining these terms and dropping the exponentially small terms,
we infer that for any fixed $\delta > 0$ and for
sufficiently large $a \gg 1$, there is $C > 0$ such that
\begin{equation}
\label{bound-first}
\sup_{\tau \geq \tau_a} \sup_{\xi \geq 0} |\psi_+(\xi,\tau)| \leq
\frac{C}{a^{1 + 4 \delta + 2\delta^2}}.
\end{equation}

To analyze $\psi_-$, we define
$$
\hat{z}_a := z_- - \frac{\xi_a}{2 \sqrt{\tau}} = \frac{-\xi-\xi_a+2\tau}{2 \sqrt{\tau}},
$$
so that $z \leq z_a$ corresponds
to $\xi \leq -\xi_a$ in the argument of $\phi_a(\xi)$.
For any $0 \leq \xi \leq \xi_a$ and $\tau \geq \tau_a$, we have
$$
\hat{z}_a \geq \frac{\tau_a - \xi_a}{\sqrt{\tau_a}} \geq
\frac{\delta}{\sqrt{2}} \sqrt{a \log(a)},
$$
which shows that $\hat{z}_a \to \infty$ as $a \to \infty$.
The term $\psi_-(\xi,\tau)$ can be split into the sum of three terms
\begin{eqnarray}
\nonumber
\psi_-(\xi,\tau) & = & \left( \int_{-\infty}^{\hat{z}_a} e^{-z^2} dz - \sqrt{\pi} \right) +
\int_{-\infty}^{\hat{z}_a} e^{-z^2}
(\phi_a(\xi - 2 \tau + 2 \sqrt{\tau} z) - 1) dz  \\
\label{splitting-psi-minus}
& \phantom{t} & + \int_{\hat{z}_a}^{z_-} e^{-z^2}
\phi_a(\xi - 2 \tau + 2 \sqrt{\tau} z) dz \equiv I + II + III.
\end{eqnarray}
Using computations similar to those for $\psi_+(\xi,\tau)$, we obtain
for any $0 \leq \xi \leq \xi_a$ and $\tau \geq \tau_a$,
$$
I \leq \frac{1}{\delta \sqrt{2 a \log(a)}} e^{-\frac{\delta^2}{2} a \log(a)}, \quad
II \leq \frac{C}{a^{3 \delta + 2\delta^2}}, \quad III \leq
\frac{1}{\delta \sqrt{2 a \log(a)}} e^{-\frac{\delta^2}{2} a \log(a) + a \log(2)}.
$$
Again, for any fixed $\delta > 0$ and for
sufficiently large $a \gg 1$, there is $C > 0$ such that
\begin{equation}
\label{bound-second}
\sup_{\tau \geq \tau_a} \sup_{0 \leq \xi \leq \xi_a} |\psi_-(\xi,\tau)| \leq
\frac{C}{a^{3 \delta + 2\delta^2}}.
\end{equation}

Finally, the exponential factor in front of $\psi_-$ yields the following simple estimate:
\begin{equation}
\label{bound-third}
\sup_{\xi \geq \xi_a} e^{-2 \xi} \psi_-(\xi,\tau) \leq \sqrt{\pi}
(2^a - 1) e^{-2 \xi_a} \leq C e^{-(1+\delta) a \log(a) + a \log(2)}, \quad \tau \geq 0,
\end{equation}
where the upper bound is exponentially small as $a \to \infty$.

Because of the symmetry for $\xi \leq 0$, we infer
from (\ref{bound-first}), (\ref{bound-second}), and (\ref{bound-third}) that
for any fixed $\delta > 0$ and for sufficiently large $a \gg 1$,
there is constant $C > 0$ such that
\begin{equation}
\label{bound-1new-Psi}
\sup_{\tau \geq \tau_a} \sup_{\xi \in \R} |\Psi(\xi,\tau) | \leq
\frac{C}{a^{3 \delta + 2\delta^2}}.
\end{equation}
This result shows that $\Psi$ is small for $\tau \geq \tau_a$
in the denominator of the exact solution (\ref{compact-form-new}).

We now proceed with similar expressions for $\partial_{\xi} \psi_+(\xi,\tau)$
and $\partial_{\xi} \psi_-(\xi,\tau)$. Using the representation
(\ref{splitting-psi}), we find
\begin{equation}
\label{derivative-1}
\partial_{\xi} I = \frac{1}{2 \sqrt{\tau}} e^{-z_a^2},
\end{equation}
\begin{eqnarray}
\nonumber
\partial_{\xi} II & = & \frac{1}{2 \sqrt{\tau}} e^{-z_a^2} (\phi_a(\xi_a) - 1) +
\frac{1}{2 \sqrt{\tau}} \int_{-z_a}^{\infty} e^{-z^2} \partial_z \phi_a(\xi + 2 \tau + 2 \sqrt{\tau} z) dz \\
\label{derivative-2}
& = & \frac{1}{\sqrt{\tau}} \int_{-z_a}^{\infty} z e^{-z^2}
\left( \phi_a(\xi + 2 \tau + 2 \sqrt{\tau} z) - 1\right) dz,
\end{eqnarray}
and
\begin{eqnarray}
\nonumber
\partial_{\xi} III & = & \frac{1}{2\sqrt{\tau}} \left[ e^{-z_+^2} \phi_a(0) - e^{-z_a^2} \phi_a(\xi_a) \right]
+ \frac{1}{2\sqrt{\tau}} \int_{-z_+}^{-z_a} e^{-z^2}
\partial_z \phi_a(\xi + 2 \tau + 2 \sqrt{\tau} z) dz \\
& = & \frac{1}{\sqrt{\tau}} \int_{-z_+}^{-z_a} z e^{-z^2}
\phi_a(\xi + 2 \tau + 2 \sqrt{\tau} z) dz.
\label{derivative-3}
\end{eqnarray}
Using bounds (\ref{bound-phi-a-infty}) and (\ref{bound-phi-a}), we obtain that
for any $\xi \geq 0$ and $\tau \geq \tau_a$
there is $C > 0$ such that
$$
\partial_{\xi} I \leq \frac{C}{\sqrt{\tau}}
e^{-\frac{(1+2\delta)^2}{8} a \log(a)}, \quad
|\partial_{\xi} II| \leq \frac{C}{\sqrt{\tau} a^{1 + 4 \delta + 2\delta^2}}, \quad
|\partial_{\xi} III| \leq \frac{C}{\sqrt{\tau}}
e^{-\frac{(1+2\delta)^2}{8} a \log(a) + a \log(2)},
$$
where in the computations for $\partial_{\xi} II$ we have used
the fact that the function $\sqrt{\tau} e^{-4 \tau/a}$ is
monotonically decreasing for any $\tau >\frac{a}{8}$, whereas
$\tau_a \gg \mathcal{O}(a)$ as $a \to \infty$.

Combining all bounds and dropping the exponentially small factors,
we infer that for any fixed $\delta > 0$ and
sufficiently large $a \gg 1$, there is $C > 0$ such that
\begin{equation}
\label{bound-first-new}
\sup_{\xi \geq 0} |\partial_{\xi} \psi_+(\xi,\tau)| \leq
\frac{C}{\sqrt{\tau} a^{1 + 4 \delta + 2\delta^2}}, \quad \tau \geq \tau_a.
\end{equation}
Similarly, we obtain
\begin{equation}
\label{bound-second-new}
\sup_{0 \leq \xi \leq \xi_a} |\partial_{\xi} \psi_-(\xi,\tau)| \leq
\frac{C}{\sqrt{\tau} a^{3 \delta + 2\delta^2}}, \quad \tau \geq \tau_a
\end{equation}
and
\begin{equation}
\label{bound-third-new}
|\partial_{\xi} \psi_-(\xi,\tau)| \leq
\frac{2^a}{\sqrt{\tau}} \int_{-\infty}^{\sqrt{\tau} - \xi/(2\sqrt{\tau})} |z| e^{-z^2} dz,
\quad \tau \geq 0, \quad \xi \geq \xi_a,
\end{equation}
which yields
\begin{equation}
\label{bound-third-new-a}
\sup_{\xi \geq \xi_a} e^{-2 \xi} |\partial_{\xi} \psi_-(\xi,\tau)| \leq
\frac{C 2^a}{\sqrt{\tau}} e^{-2 \xi_a} \leq
\frac{C}{\sqrt{\tau}} e^{-(1+\delta) a \log(a) + a \log(2)}, \quad \tau \geq \tau_a.
\end{equation}
Because of the symmetry for $\xi \leq 0$, we infer
from (\ref{bound-first-new}), (\ref{bound-second-new}), and (\ref{bound-third-new-a}) that
for any fixed $\delta > 0$ and for sufficiently large $a \gg 1$,
there is constant $C > 0$ such that
\begin{equation}
\label{bound-1new}
\sup_{\xi \in \R} \left| \frac{e^{\xi} \partial_{\xi} \psi_+(\xi,\tau) +
e^{-\xi} \partial_{\xi} \psi_-(\xi,\tau)}{e^{\xi} + e^{-\xi}} \right| \leq
\frac{C}{\sqrt{\tau} a^{3 \delta + 2\delta^2}}, \quad \tau \geq \tau_a.
\end{equation}

We now need to estimate the difference $\psi_+(\xi,\tau) - \psi_-(\xi,\tau)$.
We can write
\begin{eqnarray*}
\psi_+(\xi,\tau) - \psi_-(\xi,\tau) & = & \frac{1}{2\sqrt{\tau}} \int_0^{\infty}
\phi_a(\eta) \left[ e^{-\frac{(\eta - \xi - 2 \tau)^2}{4 \tau}} -
e^{-\frac{(\eta + \xi - 2 \tau)^2}{4 \tau}} \right] d \eta \\
& = & \frac{1}{\sqrt{\tau}} \left( \int_{-2\tau}^{-\tau} + \int_{-\tau}^{\infty} \right)
\phi_a(\eta + 2 \tau) e^{-\frac{\xi^2+\eta^2}{4 \tau}}
\sinh\left(\frac{\xi \eta}{2 \tau}\right) d \eta \equiv I + II.
\end{eqnarray*}
For any $|\xi| \leq \frac{1}{2} \tau$, we have
\begin{eqnarray*}
|I| & = & \frac{1}{\sqrt{\tau}} \int_{\tau}^{2\tau} \phi_a(2 \tau - \eta)
e^{-\frac{\xi^2+\eta^2}{4 \tau}} \sinh\left(\frac{|\xi| \eta}{2 \tau}\right) d \eta \\
& \leq & \frac{2^a}{\sqrt{\tau}} \int_{\tau}^{\infty} e^{-\frac{(\eta -|\xi|)^2}{4 \tau}} d\eta
\leq \frac{2^a \sqrt{\tau}}{\tau - |\xi|} e^{-\frac{(\tau - |\xi|)^2}{4 \tau}} \leq
\frac{2^{a+1}}{\sqrt{\tau}} e^{-\frac{1}{16}\tau}.
\end{eqnarray*}
Then, we represent
\begin{eqnarray*}
II & = & \frac{1}{\sqrt{\tau}} \int_{-\tau}^{\infty}
(\phi_a(\eta + 2 \tau) - 1) e^{-\frac{\xi^2+\eta^2}{4 \tau}}
\sinh\left(\frac{\xi \eta}{2 \tau}\right) d \eta + \frac{1}{\sqrt{\tau}} \int_{-\tau}^{\infty}
e^{-\frac{\xi^2+\eta^2}{4 \tau}}
\sinh\left(\frac{\xi \eta}{2 \tau}\right) d \eta \\
& \equiv & II_a + II_b,
\end{eqnarray*}
and estimate
\begin{eqnarray*}
|II_b| = \frac{1}{\sqrt{\tau}} \int_{\tau}^{\infty}
e^{-\frac{\xi^2+\eta^2}{4 \tau}}
\sinh\left(\frac{|\xi| \eta}{2 \tau}\right) d \eta \leq
\frac{1}{\sqrt{\tau}} \int_{\tau}^{\infty} e^{-\frac{(\eta - |\xi|)^2}{4 \tau}} d\eta
\leq \frac{2}{\sqrt{\tau}} e^{-\frac{1}{16}\tau}.
\end{eqnarray*}
and
\begin{eqnarray*}
|II_a| \leq \frac{C a}{\sqrt{\tau}} e^{2|\xi|/a -4 \tau/a} \int_{-\tau}^{\infty}
e^{-\frac{(\eta - |\xi| + 4 \tau/a)^2}{4 \tau}} d \eta \leq \frac{C a}{\sqrt{\tau}} \sqrt{\tau}
e^{-3 \tau/a} \leq \frac{C\sqrt{\log(a)}}{\sqrt{\tau} a^{3 \delta + 3\delta^2/2}},
\end{eqnarray*}
where we have used again the fact that the function $\sqrt{\tau} e^{-3 \tau/a}$ is
monotonically decreasing for any $\tau >\frac{a}{6}$, whereas
$\tau_a \gg \mathcal{O}(a)$ as $a \to \infty$.
Because $I$ and $II_b$ are exponentially small in $a$ for all
$\tau \geq \tau_a$, there is $C > 0$ such that
\begin{equation}
\label{bound-first-last}
\sup_{|\xi| \leq \frac{1}{2} \tau} |\psi_+(\xi,\tau) - \psi_-(\xi,\tau)| \leq
\frac{C\sqrt{\log(a)}}{\sqrt{\tau} a^{3 \delta + 3\delta^2/2}} \leq
\frac{C}{\sqrt{\tau} a^{3 \delta + \delta^2}}, \quad \tau \geq \tau_a.
\end{equation}

On the other hand, for any $|\xi| \geq \frac{1}{2} \tau$, there is $C > 0$ such that
\begin{equation}
\label{bound-second-last}
{\rm sech}^2(\xi) | \psi_+(\xi,\tau) - \psi_-(\xi,\tau)| \leq C 2^a e^{-\tau},
\end{equation}
which is exponentially small in $a$ for all $\tau \geq \tau_a$.
It follows from (\ref{bound-first-last})--(\ref{bound-second-last}) that
for any fixed $\delta > 0$ and for sufficiently large $a \gg 1$,
there is constant $C > 0$ such that
\begin{equation}
\label{bound-2new}
\sup_{\xi \in \R} {\rm sech}^2(\xi) | \psi_+(\xi,\tau) - \psi_-(\xi,\tau)| \leq
\frac{C}{\sqrt{\tau} a^{3 \delta + \delta^2}}, \quad \tau \geq \tau_a.
\end{equation}
Representation (\ref{compact-form-new}) and bounds (\ref{bound-1new-Psi}), (\ref{bound-1new}) and (\ref{bound-2new})
yield the desired bound (\ref{main-bound-line-infty-norm}).

We can now prove bound (\ref{main-bound-line-infty-boundary}) for $m = 0$. Using the exponential
factors, we have bound (\ref{bound-third}), which
is exponentially small as $a \to \infty$ for all $\xi \geq \xi_a$
and $\tau \geq 0$. We can rewrite bounds (\ref{bound-third-new})
and (\ref{bound-second-last}) in the equivalent form:
$$
e^{-2\xi} |\partial_{\xi} \psi_-(\xi,\tau)| \leq
C 2^a e^{-2\xi_a}, \quad \xi \geq \xi_a, \quad \tau \geq 0,
$$
and
$$
{\rm sech}^2(\xi) | \psi_+(\xi,\tau) - \psi_-(\xi,\tau)| \leq C 2^a e^{-2 \xi_a},
\quad \xi \geq \xi_a, \quad \tau \geq 0.
$$
Therefore, these bounds are also exponentially small as $a \to \infty$ and
we only need to show that the terms $\psi_+(\xi,\tau)$
and $\partial_{\xi} \psi_+(\xi,\tau)$ remain small for all
$\xi \geq (1 + \delta) \xi_a$ and $\tau \geq 0$. We need the extra factor $(1 + \delta)$
in $\xi \geq (1 + \delta) \xi_a$ to ensure that $z_a$ is bounded from below by
$$
z_a = \frac{\xi-\xi_a+2\tau}{2 \sqrt{\tau}} \geq \frac{\delta \xi_a + 2 \tau}{2 \sqrt{\tau}}
\geq \sqrt{2 \delta \xi_a} = \sqrt{\delta (1 + \delta) a \log(a)},
$$
where we have used the fact that the function $\sqrt{\tau} + \frac{\delta \xi_a}{2 \sqrt{\tau}}$
reaches minimum at $\tau_0 = \frac{\delta \xi_a}{2} > 0$.

Using the splitting of $\psi_+$ in (\ref{splitting-psi}), we infer that
the estimates for $I$ and $III$ produce exponentially small terms in $a$,
whereas the estimate for $II$ produces an algebraically small term in $a$.
As a result of analysis similar to the one for (\ref{bound-first}),
we find that there is $C > 0$ such that
\begin{equation}
\label{bound-first-outer}
\sup_{\tau \geq 0} \sup_{\xi \geq (1 + \delta) \xi_a} |\psi_+(\xi,\tau)| \leq
C a e^{-2 (1+\delta) \xi_a/a} \leq \frac{C}{a^{2 \delta + \delta^2}}.
\end{equation}

Using the exact expression (\ref{derivative-1}) and
the fact that the function $\frac{1}{\sqrt{\tau}} e^{-(\delta \xi_a + 2 \tau)^2/(4 \tau)}$
reaches maximum for $\tau \geq 0$ at
$$
\tau_0 = \frac{1}{4} \left( \sqrt{1 + 4 \delta^2 \xi_a^2} - 1\right) \approx \frac{\delta \xi_a}{2},
$$
we obtain
$$
\partial_{\xi} I \leq \frac{C}{\sqrt{a \log(a)}} e^{-\delta (1 + \delta) a \log(a)}.
$$
Similarly, using the exact expression (\ref{derivative-3}), we obtain
$$
\left| \partial_{\xi} III \right| \leq
\frac{C}{\sqrt{a \log(a)}} e^{-\delta (1 + \delta) a \log(a) + a \log(2)}.
$$
Using the exact expression (\ref{derivative-2}) rewritten as
\begin{eqnarray*}
\partial_{\xi} II = \frac{1}{2 \sqrt{\tau}} e^{-z_a^2} (\phi_a(\xi_a) - 1) +
\int_{-z_a}^{\infty} e^{-z^2} \phi_a'(\xi + 2 \tau + 2 \sqrt{\tau} z) dz,
\end{eqnarray*}
and the bounds (\ref{bound-phi-a}) and (\ref{bound-phi-derivative}), we obtain
\begin{eqnarray*}
\left| \partial_{\xi} II \right| \leq  \frac{C}{\sqrt{\log(a)} a^{\delta + 1/2}} e^{-\delta (1 + \delta) a \log(a)} +
\frac{C}{a^{1+\delta}}.
\end{eqnarray*}
Combining these estimates together, we infer that there is $C > 0$ such that
\begin{equation}
\label{algebraically-small}
\sup_{\tau \geq 0} \sup_{\xi \geq (1 + \delta) \xi_a} |\partial_{\xi} \psi_+(\xi,\tau)|
\leq \frac{C}{a^{1+\delta}}.
\end{equation}
Using (\ref{bound-first-outer}) and (\ref{algebraically-small}),
we obtain the desired bound (\ref{main-bound-line-infty-boundary}) for $m = 0$.
Bound (\ref{main-bound-line-infty-boundary}) for $m \in \N$ follows by recursion from the higher-order derivatives of
the exact solution (\ref{compact-form-new}), the decompositions (\ref{splitting-psi})
and (\ref{splitting-psi-minus}), and the bound (\ref{bound-phi-higher-derivative}) on
the higher-order derivatives of the function $\phi_a$.
\end{proof}

\begin{corollary}
Fix $\delta > 0$ and let $a = \frac{4k}{l}$.
There exist $A > 0$ and $C > 0$ such that for all $a \geq A$,
\begin{equation}
\label{bound-asymptotic-solution-other-case}
\sup_{x \in \R} \left| u(x,t) - u_{\infty}(x) \right| \leq \frac{C}{a^{3 \delta + \delta^2} t^{1/2}}, \quad
\mbox{\rm for all} \;\; t \geq T_* := \frac{(1 + \delta)^2 a \log(a)}{32 k^2}.
\end{equation}
\label{corollary-2}
\end{corollary}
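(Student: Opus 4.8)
The plan is to obtain the bound (\ref{bound-asymptotic-solution-other-case}) as a direct consequence of the $L^\infty$-estimate (\ref{main-bound-line-infty-norm}) in Lemma \ref{lemma-Burgers-solution}, simply by undoing the self-similar scaling (\ref{variables-hear}). No fresh analysis is needed; the only content is to track how the prefactor $4k$ and the weight $\tau^{1/2}$ transform back into the unscaled variables.

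First I would use the relation $u(x,t) = -4k\, w(\xi,\tau)$ from (\ref{variables-hear}) together with the identity $u_\infty(x) = -4k \tanh(\xi)$, which holds because $\xi = 4kx$ and $u_\infty(x) = -4k\tanh(4kx)$ by (\ref{exact-front}). Subtracting gives the pointwise identity $u(x,t) - u_\infty(x) = -4k\,\bigl(w(\xi,\tau) - \tanh(\xi)\bigr)$. Since $x \mapsto \xi = 4kx$ is a bijection of $\R$, taking the supremum over $x \in \R$ yields $\sup_{x \in \R}|u(x,t)-u_\infty(x)| = 4k\, \sup_{\xi \in \R}|w(\xi,\tau)-\tanh(\xi)|$.

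Next I would insert the bound (\ref{main-bound-line-infty-norm}), valid for $\tau \geq \frac{1}{2}(1+\delta)^2 a \log(a)$, producing the upper bound $4k \cdot C\, a^{-(3\delta+\delta^2)}\, \tau^{-1/2}$. Substituting $\tau = 16k^2 t$, so that $\tau^{1/2} = 4k\, t^{1/2}$, the factor $4k$ in the numerator cancels exactly against the $4k$ coming from $\tau^{-1/2}$, leaving the claimed bound $C\, a^{-(3\delta+\delta^2)}\, t^{-1/2}$.

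Finally I would translate the admissibility threshold on $\tau$ into one on $t$: the inequality $16k^2 t = \tau \geq \frac{1}{2}(1+\delta)^2 a \log(a)$ is equivalent to $t \geq \frac{(1+\delta)^2 a \log(a)}{32 k^2} = T_*$, which is precisely the threshold appearing in (\ref{bound-asymptotic-solution-other-case}). There is no genuine obstacle here: the statement is a bookkeeping corollary of Lemma \ref{lemma-Burgers-solution}, and the constants $A$ and $C$ are inherited directly from that lemma (with $A$ chosen at least as large as the threshold in Lemma \ref{lemma-Burgers-solution}).
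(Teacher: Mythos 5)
Your proposal is correct and follows exactly the paper's route: the paper's proof of Corollary \ref{corollary-2} is the one-line observation that the bound follows from (\ref{main-bound-line-infty-norm}) after the scaling transformation (\ref{variables-hear}), and your computation simply makes explicit the cancellation of the factor $4k$ against $\tau^{1/2} = 4k\,t^{1/2}$ and the translation of the time threshold. Nothing is missing.
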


\begin{proof}
Bound (\ref{bound-asymptotic-solution-other-case}) follows from (\ref{main-bound-line-infty-norm})
after the scaling transformation (\ref{variables-hear}).
\end{proof}

\begin{remark}
We can also prove convergence of $u$ to $u_{\infty}$ in $H^1$ norm but
it requires some decay as $|\xi| \to \infty$ and an extension of
bound (\ref{main-bound-line-infty-norm}) for the first derivative in $\xi$,
which we do not establish in Lemma \ref{lemma-instant-solution}.
Note that bounds (\ref{main-bound-line-infty-norm})
and (\ref{main-bound-line-infty-boundary})
will be applied on large but finite intervals in $\xi$.
\end{remark}

Figure \ref{figure-3} shows the exact solution $w(\xi,\tau)$ with $a = 10$ for
different values of $\tau$ together with
the rescaled enstrophy $E = \int_0^{\infty} w_{\xi}^2 d\xi$. The integrals
in the exact solution (\ref{compact-form-new})
were approximated by using the MATLAB \verb"quad" function. The behavior of $w$ and $E$ looks similar
to the case $a = 4$ shown on Figure \ref{figure-2} but it takes longer for
$E$ to approach to the limit $E_{\infty} = \frac{2}{3}$ from the initially
smaller value $E_0 = \frac{2}{3a}$.

Explicit computation with the initial data (\ref{initial-data-inf-modified}) yields
$E(u_0) = \frac{32}{3} k^2 l$, whereas we recall that $E(u_{\infty}) = \frac{128}{3} k^3$.
If $l = \mathcal{O}(1)$ as $k \to \infty$,
we have $k = \mathcal{O}(\mathcal{E}^{1/2})$ and hence
\begin{equation}
\label{growth-2}
E(u_{\infty}) = \mathcal{O}(\mathcal{E}^{3/2}), \quad
T_* = \mathcal{O}({\cal E}^{-1/2} \log({\cal E})), \quad \mbox{\rm as} \quad
\mathcal{E} \to \infty.
\end{equation}

\begin{figure}
\begin{center}
  \includegraphics[width=0.48\textwidth]{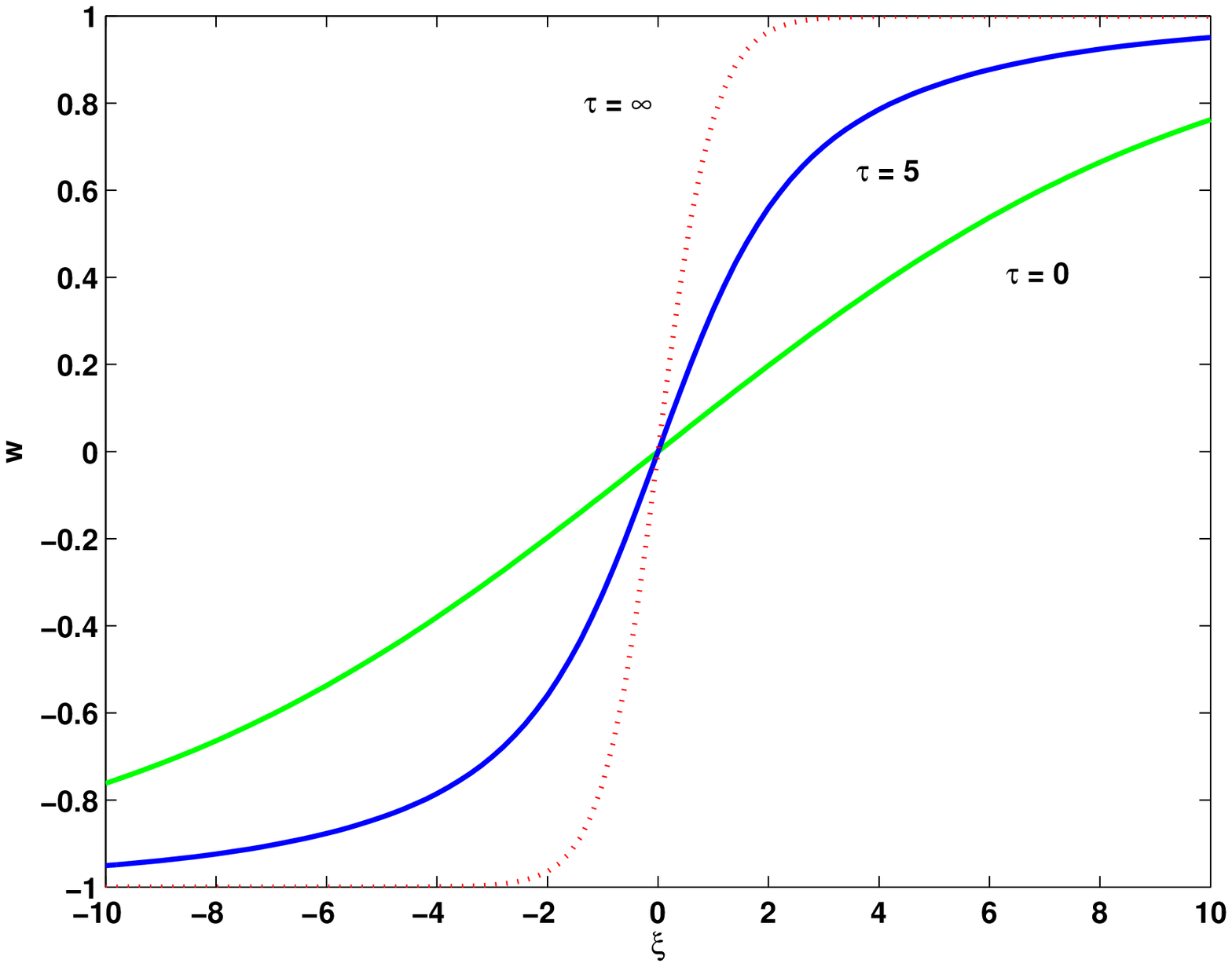}
  \includegraphics[width=0.48\textwidth]{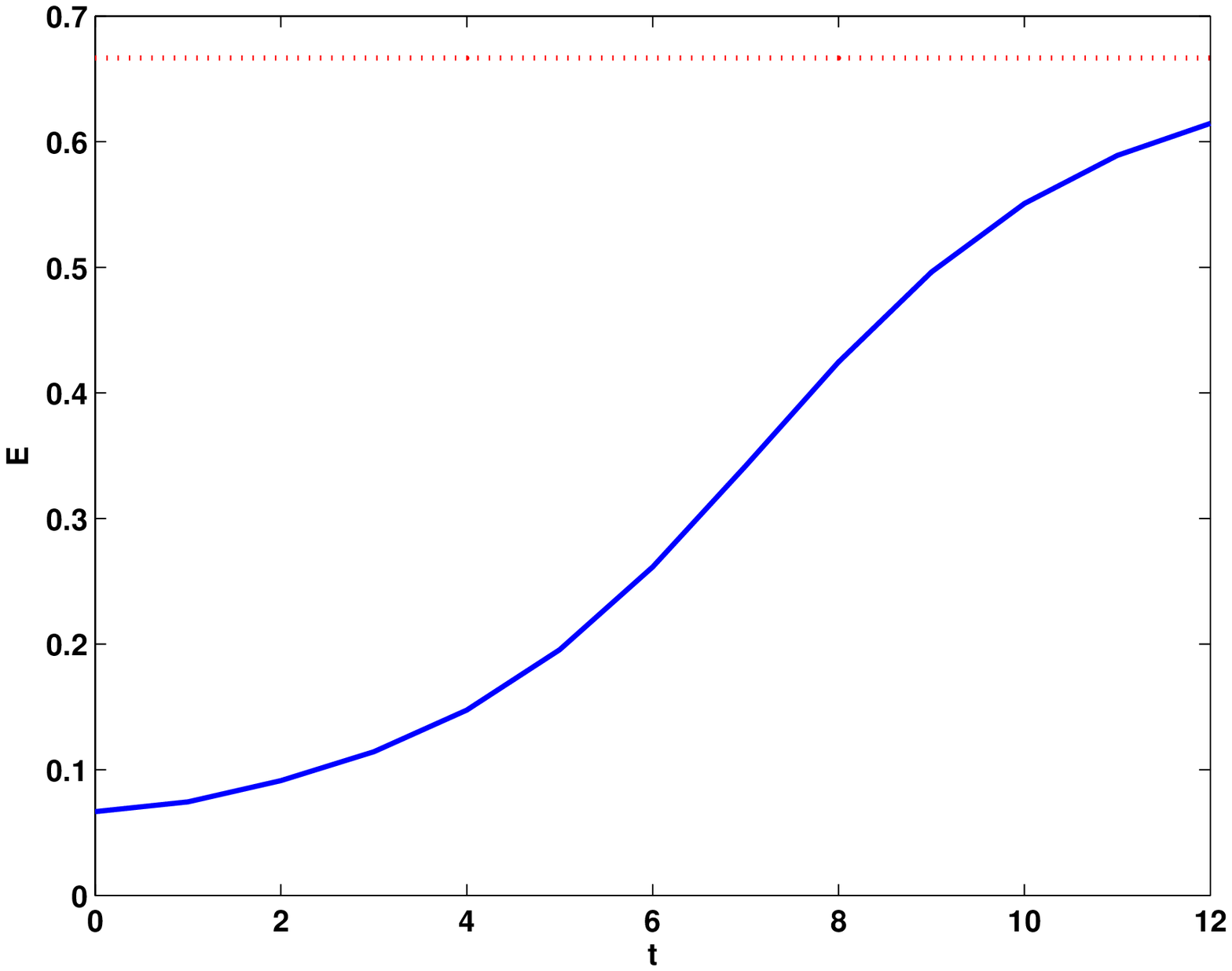}
\end{center}
\caption{Left: The exact solution (\ref{compact-form-new}) with $a = 10$
for different values of $\tau$ and in the limit $\tau \to \infty$.
Right: Enstrophy $E$ versus time $\tau$. The dotted
curve shows the value of $E$ at the viscous shock $w_{\infty}(\xi) = \tanh(\xi)$.}
\label{figure-3}
\end{figure}

If $l = (1+\Delta) \log(k)$, then Lemma \ref{lemma-exp-asymptotics} implies
that $k = \mathcal{O}(\mathcal{E}^{1/2} \log^{-1/2}(\mathcal{E}))$.
From (\ref{bound-asymptotic-solution-other-case}), we have
\begin{equation}
\label{definition-T-star}
T_* = \frac{(1 + \delta)^2}{8 (1+\Delta) k} \left[ 1 +
\mathcal{O}\left( \frac{\log\log(k)}{\log(k)}\right) \right].
\end{equation}
Hence we have
\begin{equation}
\label{growth-3}
E(u_{\infty}) = \mathcal{O}(\mathcal{E}^{3/2} \log^{-3/2}(\mathcal{E}))  \quad
\mbox{\rm and} \quad
T_* = \mathcal{O}({\cal E}^{-1/2} \log^{1/2}({\cal E})) \quad \mbox{\rm as} \quad
\mathcal{E} \to \infty.
\end{equation}
The asymptotic result (\ref{growth-3}) is included in the scaling law
(\ref{arg-max-theorem}) of Theorem \ref{theorem-time}. On the other hand,
the maximal rate (\ref{growth-2}) can not be achieved,
because if $l = \mathcal{O}(1)$, then $a = \mathcal{O}(k)$ and
the lower bound on the time $\tau \geq  \frac{1}{2} (1 + \delta)^2 a \log(a)$ in
bound (\ref{main-bound-line-infty-norm}) is beyond
the time interval $\tau \in (0,k)$ in Lemma \ref{lemma-solution-Burgers}
as $k \to \infty$.

\section{Dynamics of a viscous shock in a bounded domain}

By Lemma \ref{lemma-solution-Burgers}, the initial-value problem (\ref{Burgers})
is replaced by the equivalent problem,
\begin{equation}
\label{Burgers-rescaled-first}
\left\{ \begin{array}{l} w_{\tau} = 2 w w_{\xi} + w_{\xi \xi}, \quad \quad
|\xi| < 2(k - \tau), \quad \tau \in (0,k), \\ w|_{\tau = 0} = w_0, \quad \quad \quad  \quad \;\;
|\xi| \leq 2k,\end{array} \right.
\end{equation}
subject to the initial condition $w_0(\xi) = f(\xi/4k)$ and
the inhomogeneous boundary conditions $w = \pm 1$ at $\xi = \pm 2 (k - \tau)$, where
$f$ is given by either (\ref{instant-function}) or
(\ref{particular-function}).

Let us define the norm
\begin{equation}
\label{norms-tau}
\| w \|_{L^2_{k,\tau}} := \left( \int_{-2(k-\tau)}^{2(k-\tau)} |w(\xi,\tau)|^2 d \xi \right)^{1/2}.
\end{equation}
An approximate solution of the
initial-value problem (\ref{Burgers-rescaled-first}) can be thought in the form,
\begin{equation}
\label{approx-solution-w-1}
w_{\rm app}(\xi,\tau) = \frac{\partial}{\partial \xi} \log \psi_{\rm app}(\xi,\tau),
\end{equation}
where $\psi_{\rm app}$ is a solution of the homogeneous heat equation
on the real line. For $f$ in either (\ref{instant-function}) or
(\ref{particular-function}), the solution is constructed in Lemmas \ref{lemma-instant-solution}
or \ref{lemma-Burgers-solution}, respectively. We shall assume that
\begin{equation}
\label{positivity-slope}
0 \leq \frac{\partial}{\partial \xi} w_{\rm app}(\xi,\tau) \leq 1,
\quad \xi \in \R, \quad \tau \in \R_+,
\end{equation}
which is a reasonable assumption for a monotonic transition from
$w_{\rm app,0}(\xi) = \tanh(\xi/a)$ with either $a = 4$ or $a \to \infty$
as $k \to \infty$ to the viscous shock $w_{\infty}(\xi) = \tanh(\xi)$.

We write $w_{\rm app}(\xi,\tau) = \tanh(\xi) + \tilde{w}(\xi,\tau)$
and assume that for any fixed $C_0 \in (0,1)$, and for sufficiently large $k \gg 1$,
there is a small $C_k > 0$ such that
\begin{equation}
\label{approx-solution-w-3}
\sup_{\tau \in [0,C_0 k]} \left( |\tilde{w}(2(k - \tau),\tau)| +
\left| \frac{d}{d \tau} \tilde{w}(2(k - \tau),\tau) \right| +
\left| \frac{d^2}{d \tau^2} \tilde{w}(2(k - \tau),\tau) \right| \right) \leq
\frac{C_k}{k},
\end{equation}
where $C_k \to 0$ as $k \to \infty$.

Furthermore, we assume that for sufficiently large $k \gg 1$, there is
a small $D_k > 0$ such that
\begin{equation}
\label{approx-solution-w-3-d}
\sup_{\xi \in [-2k,2k]} \left(
\left| w_0(\xi) - w_{{\rm app},0}(\xi) \right| +
\left| w_0'(\xi) - w_{{\rm app},0}'(\xi) \right| \right) \leq
\frac{D_k}{k^{3/2}},
\end{equation}
where $D_k \to 0$ as $k \to \infty$.

Using the Cole--Hopf transformation, we rewrite the initial-value problem (\ref{Burgers-rescaled-first})
in the form,
\begin{equation}
\label{Burgers-rescaled-second}
w = \frac{\psi_{\xi}}{\psi} \quad \Rightarrow \quad
\left\{ \begin{array}{l} \psi_{\tau} = \psi_{\xi \xi}, \quad \quad \;\;
|\xi| < 2(k - \tau), \quad \tau \in (0,k), \\
\psi|_{\tau = 0} = \psi_0, \quad \;\; |\xi| \leq 2k,\end{array} \right.
\end{equation}
subject to the initial condition $\psi_0 = \exp\left(\int_0^{\xi} w_0(\xi') d \xi'\right)$ and
the Robin boundary conditions $\psi_{\xi} = \pm \psi$ at $\xi = \pm 2 (k - \tau)$.
Using the decomposition
\begin{equation}
\label{substition-psi-approx}
\psi = \psi_{\rm app} (1 + \Psi),
\end{equation}
we find the equivalent initial-value problem,
\begin{equation}
\label{Burgers-rescaled-third}
\left\{ \begin{array}{l} \Psi_{\tau} = \Psi_{\xi \xi} + 2 w_{\rm app} \Psi_{\xi},
\quad \quad \quad |\xi| < 2(k - \tau), \quad \tau \in (0,k), \\
\Psi|_{\tau = 0} = \Psi_0, \quad \quad \quad \quad \quad \quad \quad |\xi| \leq 2k,\end{array} \right.
\end{equation}
subject to the initial condition,
\begin{equation}
\label{Psi-zero}
\Psi_0(\xi) = \frac{\psi_0(\xi)}{\psi_{{\rm app},0}(\xi)} - 1 = \exp\left( \int_0^{\xi}
\left[ w_0(\xi') - w_{{\rm app},0}(\xi') \right] d \xi'\right) - 1,
\end{equation}
and the Robin inhomogeneous boundary conditions,
\begin{equation}
\label{boundary-rescaled-third}
\Psi_{\xi} = \pm \chi(\tau) (1+\Psi), \quad \xi = \pm 2 (k - \tau), \quad \tau \in [0,k],
\end{equation}
where
\begin{equation}
\label{boundary-rescaled-expression}
\chi(\tau) = 1 - w_{\rm app}(2(k-\tau),\tau) =
\frac{2 e^{-4(k-\tau)}}{1 + e^{-4(k-\tau)}} - \tilde{w}(2(k-\tau),\tau).
\end{equation}

By bound (\ref{approx-solution-w-3}) and explicit expression (\ref{boundary-rescaled-expression}),
for any fixed $C_0 \in (0,1)$ and for sufficiently large $k \gg 1$, there is a small $C_k > 0$ such that
\begin{equation}
\label{chi-estimate}
\sup_{\tau \in [0,C_0 k]} \left( |\chi(\tau)| + |\chi'(\tau)| +
|\chi''(\tau)| \right) \leq \frac{C_k}{k}.
\end{equation}
Because $C_k \to 0$ as $k \to \infty$ in (\ref{chi-estimate}),
the Robin boundary conditions (\ref{boundary-rescaled-third}) converge to
the Neumann conditions as $k \to \infty$.

By bound (\ref{approx-solution-w-3-d}) and explicit expression (\ref{Psi-zero}),
for sufficiently large $k \gg 1$, there is a small $D_k > 0$ such that
\begin{equation}
\label{initial-estimate-small}
\sup_{\xi \in [-2k,2k]} |\Psi_0(\xi)| \leq \frac{D_k}{k^{1/2}}, \quad
\| \Psi_0 \|_{L^2_{k,0}} \leq D_k, \quad
\| \Psi_0' \|_{L^2_{k,0}} + \| \Psi_0'' \|_{L^2_{k,0}} \leq \frac{D_k}{k}.
\end{equation}
Because $D_k \to 0$ as $k \to \infty$ in (\ref{initial-estimate-small}),
the initial condition (\ref{Psi-zero}) converges to $0$ as $k \to \infty$.

Using apriori energy estimates, we prove the following result.

\begin{lemma}
Assume (\ref{positivity-slope}), (\ref{chi-estimate}), and (\ref{initial-estimate-small}).
Fix $C_0 \in (0,1)$. There exist constants $K \geq 1$ and $C > 0$, such that for all
$k \geq K$ and $\tau \in [0,C_0 k]$, the unique solution
of the Burgers equation (\ref{Burgers-rescaled-first}) satisfies
\begin{equation}
\label{bound-w-main}
\| w - w_{\rm app} \|^2_{L^2_{k,\tau}} + \| \partial_{\xi} (w - w_{\rm app}) \|^2_{L^2_{k,\tau}}
\leq C \left( C_k + D_k^2 \right)
\end{equation}
where $C_k$ and $D_k$ are defined by (\ref{chi-estimate}) and (\ref{initial-estimate-small}).
\label{lemma-new-approximation}
\end{lemma}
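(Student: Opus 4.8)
The plan is to estimate the transformed perturbation $\Psi$ governed by the linear parabolic problem (\ref{Burgers-rescaled-third})--(\ref{boundary-rescaled-third}) on the moving interval $|\xi| < 2(k-\tau)$, and then transfer the bounds to $w - w_{\rm app}$. The transfer rests on an algebraic identity: inserting (\ref{substition-psi-approx}) into the Cole--Hopf relation (\ref{Cole-Hopf}) gives $w - w_{\rm app} = \Psi_\xi/(1+\Psi)$ and hence $\partial_\xi(w-w_{\rm app}) = \Psi_{\xi\xi}/(1+\Psi) - \Psi_\xi^2/(1+\Psi)^2$. Once $\|\Psi\|_{L^\infty}$ is shown to be small, so that $1+\Psi$ is bounded away from $0$, controlling the left-hand side of (\ref{bound-w-main}) reduces to bounding $\|\Psi_\xi\|_{L^2_{k,\tau}}$ and $\|\Psi_{\xi\xi}\|_{L^2_{k,\tau}}$, the quartic remainder $\|\Psi_\xi\|_{L^4}^4$ being absorbed by a one-dimensional Gagliardo--Nirenberg inequality on the interval. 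So I need energy control of $\Psi$ up to two spatial derivatives.

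The first step is the $L^2$ estimate. Multiplying (\ref{Burgers-rescaled-third}) by $\Psi$ and integrating, the Reynolds transport theorem for the shrinking domain contributes the endpoint terms $-2\Psi^2(\pm 2(k-\tau),\tau)$, which have a favorable sign, while integration by parts turns the convection into $-2\int(\partial_\xi w_{\rm app})\Psi^2\,d\xi \le 0$ by (\ref{positivity-slope}) plus endpoint contributions. The key structural point is a boundary cancellation: because $w_{\rm app}=\pm(1-\chi)$ at $\xi=\pm2(k-\tau)$ by (\ref{boundary-rescaled-expression}) and $\Psi_\xi=\pm\chi(1+\Psi)$ there by (\ref{boundary-rescaled-third}), the quadratic endpoint terms from $[\Psi\Psi_\xi]$, from $[w_{\rm app}\Psi^2]$, and from the moving boundary cancel exactly, leaving only the linear forcing $2\chi(\Psi(2(k-\tau),\tau)+\Psi(-2(k-\tau),\tau))$. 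I would control this by Young's inequality and a trace inequality on the interval, absorbing the $\|\Psi_\xi\|^2$ part into the dissipation $-2\|\Psi_\xi\|_{L^2_{k,\tau}}^2$ and leaving a forcing of size $\mathcal{O}(|\chi|^2)=\mathcal{O}((C_k/k)^2)$ by (\ref{chi-estimate}), together with a Gronwall coefficient made $\mathcal{O}(1/k)$ by the trace term. Integrating over $\tau\in[0,C_0k]$ with $\|\Psi_0\|_{L^2_{k,0}}\le D_k$ from (\ref{initial-estimate-small}) then gives both the pointwise bound $\|\Psi\|^2_{L^2_{k,\tau}}\lesssim C_k+D_k^2$ and, crucially, the time-integrated bound $\int_0^{C_0k}\|\Psi_\xi\|^2_{L^2_{k,\tau}}\,d\tau\lesssim C_k+D_k^2$; the trace inequality upgrades the first to $L^\infty$-smallness of $\Psi$.

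The derivative estimates follow the same Reynolds/integration-by-parts scheme but face two genuine difficulties. First, the $\|\Psi_\xi\|^2$ and $\|\Psi_{\xi\xi}\|^2$ balances produce the endpoint term $[\Psi_\xi\Psi_{\xi\xi}]_{-2(k-\tau)}^{2(k-\tau)}$, which does not cancel as at the $L^2$ level and cannot be read off from the Robin condition. To express $\Psi_{\xi\xi}$ at the moving boundary I would use the equation $\Psi_{\xi\xi}=\Psi_\tau-2w_{\rm app}\Psi_\xi$ together with the Robin condition differentiated along $\xi=\pm2(k-\tau)$; this differentiation generates $\chi'$ and, one order higher, $\chi''$ --- exactly the quantities that (\ref{chi-estimate}) is designed to bound. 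Second, differentiating (\ref{Burgers-rescaled-third}) in $\xi$ and testing against $\Psi_\xi$ leaves the bad-signed bulk term $+\int(\partial_\xi w_{\rm app})\Psi_\xi^2\,d\xi\ge0$, concentrated where $\partial_\xi w_{\rm app}\approx\sech^2(\xi)$ lives; a naive Gronwall argument would cost a factor $e^{\mathcal{O}(k)}$ over the long interval $[0,C_0k]$ and destroy the bound.

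The crux, and the step I expect to be hardest, is closing these estimates uniformly over $\tau\in[0,C_0k]$ in spite of the bad-signed term. I would do this by a uniform (dynamical-systems) Gronwall inequality rather than the classical one: the dissipation $-\|\Psi_\xi\|^2$ already gave a time-integrated bound on $\|\Psi_\xi\|^2$, and the $\|\Psi_\xi\|^2$-balance likewise yields a time-integrated bound on $\|\Psi_{\xi\xi}\|^2$; applying the uniform Gronwall lemma on a fixed window of length $r=\mathcal{O}(1)$ converts these integral bounds into pointwise-in-$\tau$ bounds with a $k$-independent constant $e^{Cr}$, so the bad term is paid for by the dissipation of the previous level instead of by exponential growth. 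The restriction $C_0<1$ keeps the domain length $4(k-\tau)\ge4(1-C_0)k$ nondegenerate and keeps (\ref{positivity-slope}) and (\ref{chi-estimate}) in force throughout. Assembling the $L^2$, $H^1$, and $H^2$ bounds and translating back through the identity $w-w_{\rm app}=\Psi_\xi/(1+\Psi)$ then yields (\ref{bound-w-main}). The main obstacle is thus the bookkeeping that simultaneously (i) extracts $\Psi_{\xi\xi}$ at the moving Robin boundary through the time-differentiated boundary condition, and (ii) arranges the hierarchy so that each bad-signed bulk term is dominated by the time-integrated dissipation of the estimate below it.
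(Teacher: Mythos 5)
Your proposal is correct and follows the same overall route as the paper: Cole--Hopf, energy identities for $\Psi$, $\Psi_\xi$, $\Psi_{\xi\xi}$ on the shrinking domain, extraction of $(\Psi_{\xi\xi})_+$ at the moving Robin boundary via the equation and the total time derivative along $\xi=\pm 2(k-\tau)$ (which is exactly where $\chi'$ and $\chi''$ from (\ref{chi-estimate}) enter, cf.\ the paper's identities (\ref{first-der})--(\ref{third-der}) and the modified energies $H_1$, $H_2$), and transfer back through $w-w_{\rm app}=\Psi_\xi/(1+\Psi)$. The one place where you genuinely diverge is the step you single out as the crux: the bad-signed bulk term $+\|w_{{\rm app},\xi}^{1/2}\Psi_\xi\|^2_{L^2_{k,\tau}}$. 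You propose a uniform Gronwall lemma on $\mathcal{O}(1)$ windows, converting the time-integrated dissipation of the lower level into pointwise control; this works, since the $L^2$ identity does yield $\int_0^{C_0k}\|\Psi_\xi\|^2\,d\tau\lesssim C_k+D_k^2$ and the growth factor $e^{Cr}$ is $k$-independent. The paper implements the same idea more cheaply: because (\ref{positivity-slope}) gives $0\le w_{{\rm app},\xi}\le 1$, simply \emph{adding} the $L^2$ and $H^1$ identities replaces $-\|\Psi_\xi\|^2+\|w_{{\rm app},\xi}^{1/2}\Psi_\xi\|^2$ by $-\|(1-w_{{\rm app},\xi})^{1/2}\Psi_\xi\|^2\le 0$ pointwise in $\tau$, and at the next level the weight $4H_1$ together with an auxiliary $\frac{\gamma}{2}\|\Psi\|^2$ (with $\gamma=-\inf w_{{\rm app},\xi\xi\xi}$) absorbs both $3\|w_{{\rm app},\xi}^{1/2}\Psi_{\xi\xi}\|^2$ and the sign-indefinite $w_{{\rm app},\xi\xi\xi}$ term, so no window argument is needed. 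Your absorption of the linear boundary forcing $2\chi\Psi_+$ by Young plus a trace inequality is also sound (and in fact gives a slightly sharper $C_k^2$ in place of the paper's $C_k$, obtained there via $|\Psi_+|\le C\sqrt{H}$ and $2\sqrt{H}\le 1+H$); either version yields (\ref{bound-w-main}).
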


\begin{proof}
We note the correspondence,
\begin{equation}
w = w_{\rm app} + \frac{\Psi_{\xi}}{1 + \Psi}.
\end{equation}
Fix $C_0 \in (0,1)$ and introduce the energy for the initial-boundary value problem
(\ref{Burgers-rescaled-third}),
\begin{equation}
H(\tau) := \frac{1}{2} \| \Psi \|_{L^2_{k,\tau}}^2 + \frac{1}{2} \| \Psi_{\xi} \|_{L^2_{k,\tau}}^2,
\quad \tau \in [0,C_0 k].
\end{equation}
If $H(\tau)$ is small for all $\tau \in [0,C_0 k]$, then there is $C > 0$
such that
\begin{equation}
\label{correspondence-psi}
\| w - w_{\rm app} \|^2_{L^2_{k,\tau}} \leq C H(\tau).
\end{equation}

The initial-boundary value problem (\ref{Burgers-rescaled-third})
inherits the local well-posedness of the initial-value problem
(\ref{Burgers}) if $\Psi_0 \in H^1([-2k,2k])$ as long as
\begin{equation}
\label{constraint-L-infty}
\sup_{|\xi| \leq 2(k-\tau)} |\Psi(\xi,\tau)| < 1, \quad \tau \in [0, C_0 k],
\end{equation}
to ensure that $1 + \Psi(\xi,\tau) > 0$ for all
$|\xi| \leq 2 (k-\tau)$ and $\tau \in [0,k]$. Recalling
Sobolev's embedding of $H^1$ to $L^{\infty}$, we have
\begin{equation}
\label{Sobolev-embedding}
\exists C > 0 : \quad \sup_{|\xi| \leq 2(k-\tau)} |\Psi(\xi,\tau)| \leq C \sqrt{H(\tau)},
\quad \tau \in [0,C_0 k],
\end{equation}
hence the constraint (\ref{constraint-L-infty}) is satisfied
if $H(\tau)$ remains small for all $\tau \in [0, C_0 k]$.

Computations are simplified for the even solutions $\Psi(-\xi,\tau) = \Psi(\xi,\tau)$,
which generate odd solutions $w(-\xi,\tau) = -w(\xi,\tau)$ of the Burgers
equation (\ref{Burgers-rescaled-first}). Multiplying the heat equation
(\ref{Burgers-rescaled-third}) by the solution $\Psi$
and integrating in $\xi$ over the time-dependent interval
$[-\xi_+(\tau),\xi_+(\tau)]$ with $\xi_+(\tau) = 2(k-\tau)$,
we obtain
\begin{eqnarray*}
\frac{1}{2}\frac{d}{d \tau} \| \Psi \|_{L^2_{k,\tau}}^2 + 2 \Psi_+^2(\tau) & = &
2 \chi(\tau) \Psi_+(\tau) (1 + \Psi_+(\tau)) +
2 w_{\rm app}(\xi_+(\tau),\tau) \Psi_+^2(\tau) \\
& \phantom{t} & - \| \Psi_{\xi} \|_{L^2_{k,\tau}}^2 - \| w_{{\rm app},\xi}^{1/2} \Psi \|_{L^2_{k,\tau}}^2,
\end{eqnarray*}
where $\Psi_+(\tau) = \Psi(\xi_+(\tau),\tau)$. Canceling
redundant terms, this expression becomes
\begin{eqnarray}
\label{energy-rate-one}
\frac{1}{2} \frac{d}{d \tau} \| \Psi \|_{L^2_{k,\tau}}^2 =
2 \chi(\tau) \Psi_+(\tau) - \| \Psi_{\xi} \|_{L^2_{k,\tau}}^2 - \|
w_{{\rm app},\xi}^{1/2} \Psi \|_{L^2_{k,\tau}}^2.
\end{eqnarray}
Thanks to property (\ref{positivity-slope}), the rate of change of
$\| \Psi \|_{L^2_{k,\tau}}^2$ in $\tau$ is almost negative definite, except of the first
boundary term, which is small as $k \to \infty$. The boundary term is
controlled by (\ref{chi-estimate}) and (\ref{Sobolev-embedding}). We need one more equation
to be able to control the energy $H(\tau)$ for $\tau \in [0,C_0 k]$.

Differentiating the heat equation (\ref{Burgers-rescaled-third}) in $\xi$,
multiplying the resulting equation by $\Psi_{\xi}$,
integrating in $\xi$ over
$[-\xi_+(\tau),\xi_+(\tau)]$, and performing similar simplifications, we obtain
\begin{eqnarray*}
\frac{1}{2} \frac{d}{d \tau} \| \Psi_{\xi} \|_{L^2_{k,\tau}}^2 & = &
2 \chi(\tau) (1 + \Psi_+(\tau)) (\Psi_{\xi \xi})_+(\tau) -
2 \chi^3(\tau) (1 + \Psi_+(\tau))^2 \\
& \phantom{t} & - \| \Psi_{\xi \xi} \|_{L^2_{k,\tau}}^2 + \|
w_{{\rm app},\xi}^{1/2} \Psi_{\xi} \|^2_{L^2_{k,\tau}},
\end{eqnarray*}
where $(\Psi_{\xi \xi})_+(\tau) = \Psi_{\xi \xi}(\xi_+(\tau),\tau)$.
For strong solutions of the initial-value problem (\ref{Burgers-rescaled-third}),
we obtain by continuity that
\begin{equation}
\label{first-der}
\frac{d}{d \tau} \Psi_+(\tau) = \left( \Psi_{\tau} - 2 \Psi_{\xi} \right) \biggr|_{\xi = \xi_+(\tau)}
= (\Psi_{\xi \xi})_+(\tau) - 2 \chi^2(\tau) (1 + \Psi_+(\tau)).
\end{equation}
Hence we have
\begin{eqnarray}
\nonumber
\frac{1}{2} \frac{d}{d \tau} \| \Psi_{\xi} \|_{L^2_{k,\tau}}^2 & = &
2 \chi(\tau) (1 + \Psi_+(\tau)) \frac{d}{d \tau} \Psi_+(\tau) + 2 \chi^3(\tau) (1 + \Psi_+(\tau))^2 \\
\label{energy-rate-two} & \phantom{t} & \phantom{texttext}
- \| \Psi_{\xi \xi} \|_{L^2_{k,\tau}}^2 + \|
w_{{\rm app},\xi}^{1/2} \Psi_{\xi} \|^2_{L^2_{k,\tau}},
\end{eqnarray}

The positive last term in (\ref{energy-rate-two}) is compensated
by the negative second term in (\ref{energy-rate-one}) in the sum of these two expressions.
Additionally, we can move the derivative of $\Psi_+(\tau)$ under the derivative sign
and obtain
\begin{eqnarray}
\nonumber
\frac{d}{d \tau} H_1(\tau) =
2 ( \chi(\tau) - \chi'(\tau) ) \Psi_+(\tau)  - \chi'(\tau) \Psi_+^2(\tau)
+ 2 \chi^3(\tau) (1 + \Psi_+(\tau))^2 \\ \label{energy-derivative}
- \| \Psi_{\xi \xi} \|_{L^2_{k,\tau}}^2 - \| w_{{\rm app},\xi}^{1/2} \Psi \|^2_{L^2_{k,\tau}}
- \| (1 - w_{{\rm app},\xi})^{1/2} \Psi_{\xi} \|^2_{L^2_{k,\tau}},
\end{eqnarray}
where
$$
H_1(\tau) := H(\tau) - \chi(\tau) (2 \Psi_+(\tau) + \Psi_+^2(\tau)).
$$
The last three terms in the right-hand-side of equation (\ref{energy-derivative})
are negative thanks to property (\ref{positivity-slope}).
On the other hand, the functions $\chi(\tau)$ and $\chi'(\tau)$ are controlled by
(\ref{chi-estimate}). Integrating
(\ref{energy-derivative}) on $[0,C_0 \tau]$ and using Sobolev's inequality (\ref{Sobolev-embedding})
and an elementary inequality $2 \sqrt{H(\tau)} \leq 1 + H(\tau)$,
we obtain
\begin{eqnarray*}
H_1(\tau) - H_1(0) \leq \frac{C_k}{k} \int_0^{\tau} (1 + H(\tau')) d \tau',
\quad \tau \in [0,C_0k].
\end{eqnarray*}
By the estimate (\ref{chi-estimate}), Sobolev's inequality (\ref{Sobolev-embedding}) again,
and Gronwall's inequality, we infer that there is a constant $C > 0$ such that
\begin{equation}
\label{control-first-level}
H(\tau) \leq C \left( H(0) + C_k \right), \quad \tau \in [0,C_0 k].
\end{equation}
By the estimate (\ref{initial-estimate-small}), we have $H(0) \leq D_k^2$ and
bound (\ref{control-first-level})  yields control of the first term
in the bound (\ref{bound-w-main}) from the correspondence (\ref{correspondence-psi}).

Let us now control the second term in the bound (\ref{bound-w-main}).
Differentiating the heat equation (\ref{Burgers-rescaled-third}) twice in $\xi$,
multiplying the resulting equation by $\Psi_{\xi \xi}$,
integrating in $\xi$ over $[-\xi_+(\tau),\xi_+(\tau)]$,
and performing similar simplifications, we obtain
\begin{eqnarray}
\nonumber
\frac{1}{2} \frac{d}{d \tau} \| \Psi_{\xi \xi} \|_{L^2_{k,\tau}}^2 & = &
2 (\Psi_{\xi \xi})_+(\tau) (\Psi_{\xi \xi \xi})_+(\tau) -
2 \chi(\tau) (\Psi_{\xi \xi})^2_+(\tau) + 2 (w_{{\rm app},\xi\xi})_+
(\Psi_{\xi})^2_+(\tau) \\
\label{energy-rate-three}
& \phantom{t} & - \| \Psi_{\xi \xi \xi} \|_{L^2_{k,\tau}}^2 + 3 \|
w_{{\rm app},\xi}^{1/2} \Psi_{\xi \xi} \|^2_{L^2_{k,\tau}} -
\| w_{{\rm app},\xi \xi \xi}^{1/2} \Psi_{\xi} \|^2_{L^2_{k,\tau}},
\end{eqnarray}
where the subscript ``$+$'' denotes the boundary value at $\xi = \xi_+(\tau)$.
For strong solutions of the initial-value problem (\ref{Burgers-rescaled-third}),
we obtain by continuity that
\begin{eqnarray}
\nonumber
\frac{d}{d \tau} (\Psi_{\xi})_+(\tau) & = & \left( \Psi_{\tau \xi} - 2 \Psi_{\xi \xi} \right) \biggr|_{\xi = \xi_+(\tau)} \\ \label{second-der}
& = & (\Psi_{\xi \xi \xi})_+(\tau) - 2 \chi(\tau) (\Psi_{\xi \xi})_+(\tau)
+ 2 (w_{{\rm app},\xi})_+ (\Psi_{\xi})_+(\tau)
\end{eqnarray}
and
\begin{equation}
\label{third-der}
(w_{{\rm app},\xi \xi})_+ = (w_{{\rm app},\tau})_+ - 2 (w_{\rm app})_+ (w_{{\rm app},\xi})_+
= 2 \chi(\tau) (w_{{\rm app},\xi})_+ - \chi'(\tau).
\end{equation}

Using (\ref{boundary-rescaled-third}), (\ref{first-der}), (\ref{second-der}), and (\ref{third-der})
we can rewrite (\ref{energy-rate-three}) in the form:
\begin{eqnarray*}
\frac{1}{2} \frac{d}{d \tau} \| \Psi_{\xi \xi} \|_{L^2_{k,\tau}}^2 & = &
- 2 (2 \chi^3(\tau)
+ 2 \chi(\tau) (w_{{\rm app},\xi})_+ - \chi'(\tau))
(1 + \Psi_+(\tau)) \frac{d}{d \tau} \Psi_+(\tau)   \\
& \phantom{t} &
- 2 \chi^2(\tau) (1 + \Psi_+(\tau))^2 (4 \chi^3(\tau)
+ 2 \chi(\tau) (w_{{\rm app},\xi})_+ - \chi'(\tau)) \\
& \phantom{t} & + 4 \chi(\tau) (\Psi_{\xi \xi})^2_+(\tau) - \| \Psi_{\xi \xi \xi} \|_{L^2_{k,\tau}}^2 + 3 \|
w_{{\rm app},\xi}^{1/2} \Psi_{\xi \xi} \|^2_{L^2_{k,\tau}} \\
& \phantom{t} & -
\| w_{{\rm app},\xi \xi \xi}^{1/2} \Psi_{\xi} \|^2_{L^2_{k,\tau}}.
\end{eqnarray*}

To control $\| \Psi_{\xi \xi} \|_{L^2_{k,\tau}}^2$, we define
\begin{eqnarray}
\nonumber
H_2(\tau) & := &  \frac{1}{2} \| \Psi_{\xi \xi} \|_{L^2_{k,\tau}}^2 + 4 H_1(\tau)
+ \frac{\gamma}{2} \| \Psi \|_{L^2_{k,\tau}}^2 \\ \label{energy-last-estimates}
& \phantom{t} &
+ (2 \chi^3(\tau) + 2 \chi(\tau) (w_{{\rm app},\xi})_+ - \chi'(\tau))
\left( 2 \Psi_+(\tau) + \Psi_+^2(\tau) \right),
\end{eqnarray}
where $\gamma$ is to be specified below. Thus we obtain
\begin{eqnarray*}
\frac{d}{d \tau} H_2(\tau) & = &
2 (\Psi_+(\tau) + \Psi_+^2(\tau)) \frac{d}{d \tau} (2 \chi^3(\tau)
+ 2 \chi(\tau) (w_{{\rm app},\xi})_+ - \chi'(\tau))  \\
& \phantom{t} &
- 2 \chi^2(\tau) (1 + \Psi_+(\tau))^2 (4 \chi^3(\tau)
+ 2 \chi(\tau) (w_{{\rm app},\xi})_+ - \chi'(\tau)) \\
& \phantom{t} & + 8 ( \chi(\tau) - \chi'(\tau) ) \Psi_+(\tau)  - 4 \chi'(\tau) \Psi_+^2(\tau)
+ 8 \chi^3(\tau) (1 + \Psi_+(\tau))^2 \\
& \phantom{t} & + 2 \gamma \chi(\tau) \Psi_+(\tau)
+ 4 \chi(\tau) (\Psi_{\xi \xi})^2_+(\tau) - \| \Psi_{\xi \xi \xi} \|_{L^2_{k,\tau}}^2
- \| \Psi_{\xi \xi} \|_{L^2_{k,\tau}}^2 \\
& \phantom{t} & - 3 \|
(1 - w_{{\rm app},\xi})^{1/2} \Psi_{\xi \xi} \|^2_{L^2_{k,\tau}} - 4 \| w_{{\rm app},\xi}^{1/2} \Psi \|^2_{L^2_{k,\tau}}
- 4 \| (1 - w_{{\rm app},\xi})^{1/2} \Psi_{\xi} \|^2_{L^2_{k,\tau}}\\
& \phantom{t} & -
\gamma \| w_{{\rm app},\xi}^{1/2} \Psi \|_{L^2_{k,\tau}}^2 -
\| ( \gamma + w_{{\rm app},\xi \xi \xi})^{1/2} \Psi_{\xi} \|^2_{L^2_{k,\tau}}.
\end{eqnarray*}

If we choose
$$
\gamma := - \inf_{\tau \in [0,C_0 k]} \inf_{\xi \in [-\xi_+(\tau),\xi_+(\tau)]} w_{{\rm app},\xi \xi \xi} \geq 0,
$$
the last term is negative for any $\tau \in [0,C_0 k]$. By Sobolev's inequality,
$$
\exists C > 0 : \quad \sup_{|\xi| \leq 2(k-\tau)} |\Psi_{\xi \xi}(\xi,\tau)|^2 \leq
C (\| \Psi_{\xi \xi \xi} \|_{L^2_{k,\tau}}^2 + \| \Psi_{\xi \xi} \|_{L^2_{k,\tau}}^2), \quad \tau \in [0,C_0 k],
$$
and the smallness of $\chi$, the term
$$
4 \chi(\tau) (\Psi_{\xi \xi})^2_+(\tau) - \| \Psi_{\xi \xi \xi} \|_{L^2_{k,\tau}}^2
- \| \Psi_{\xi \xi} \|_{L^2_{k,\tau}}^2
$$
is also negative. All other integral terms are negative, whereas the
boundary terms are controlled by
the estimate (\ref{chi-estimate}), Sobolev's inequality (\ref{Sobolev-embedding}),
and the previous estimate (\ref{control-first-level}). As a result, we obtain
$$
H_2(\tau) \leq H_2(0) + \frac{C_k}{k} \int_0^{\tau} (1 + H_1(\tau')) d \tau' \leq H_2(0) + C_k,
\quad \tau \in [0,C_0k].
$$
Using (\ref{initial-estimate-small}), (\ref{control-first-level}), and (\ref{energy-last-estimates}),
we infer that there is a constant $C > 0$ such that
\begin{eqnarray}
\label{control-second-level}
\| \Psi_{\xi \xi} \|_{L^2_{k,\tau}}^2 \leq C (C_k + D_k^2), \quad \tau \in [0,C_0k].
\end{eqnarray}
Thanks to the exact expression
$$
\partial_{\xi} (w - w_{\rm app}) = \frac{\Psi_{\xi \xi}}{1 + \Psi} - \frac{\Psi_{\xi}^2}{(1 + \Psi)^2},
$$
and Sobolev's inequality,
$$
\exists C > 0 : \quad \sup_{|\xi| \leq 2(k-\tau)} |\Psi_{\xi}(\xi,\tau)|^2 \leq
C (\| \Psi_{\xi \xi} \|_{L^2_{k,\tau}}^2 + \| \Psi_{\xi} \|_{L^2_{k,\tau}}^2), \quad \tau \in [0,C_0 k],
$$
bounds (\ref{control-first-level}) and (\ref{control-second-level})
yield control of the second term in the bound (\ref{bound-w-main}).
\end{proof}

\begin{remark}
Recall that $w_{\rm app}(\xi,\tau) = \tanh(\xi) + \tilde{w}(\xi,\tau)$, where
$\tilde{w}$ is small for large values of $\tau$. If we consider the operator
$L = \partial_{\xi}^2 + 2 \tanh(\xi) \partial_{\xi}$ on the truncated domain
$[-\xi_0,\xi_0]$ subject to the Neumann boundary conditions,
then the eigenvalues of this boundary-value problem for even eigenfunctions
are located at the real axis and bounded from above by $-1$ for
any $\xi_0 > 0$. This suggests the asymptotic stability of the zero solution
of $\Psi_{\tau} = L \Psi$ but does not imply any good bounds on the resolvent
operator $(\lambda {\rm Id} - L)^{-1}$, which is needed for the estimates of the
remainder terms. Moreover, the resolvent operator $(\lambda {\rm Id} - L)^{-1}$
may grow exponentially as $\xi_0 \to \infty$ because the continuous spectrum
of $L$ on the infinite line domain $\R$ touches the imaginary axis and the zero eigenvalue.
See Section 4.4 in Scheel \& Sandstede \cite{Scheel}. Apriori energy estimates
used in the proof of Lemma \ref{lemma-new-approximation} avoid this problem,
as well as they incorporate moving boundary conditions in analysis
of the remainder terms.
\end{remark}

\section{Proof of Theorem \ref{theorem-time}: Case $l = \mathcal{O}(k)$ as $k \to \infty$.}

We shall consider the initial data (\ref{initial-data}) and (\ref{instant-function}). This
corresponds to the choice $l = k$, which represents a more general
case $l = \mathcal{O}(k)$ as $k \to \infty$.

By Lemma \ref{lemma-solution-Burgers}, a solution of the Burgers equation (\ref{Burgers})
is written in the form
\begin{equation}
u(x,t) = p(t) \left( 2x - w(p(t) x,\tau) \right), \quad p(t) =
\frac{4 k}{1 + 16 k t}, \quad \tau = \frac{16 k^2 t}{1 + 16 k t},
\end{equation}
where $w$ solves the rescaled Burgers equation (\ref{Burgers-rescaled-first})
with the initial data
\begin{equation}
w_0(\xi) = \lambda \tanh(\xi/4) \quad \Rightarrow \quad
\psi_0(\xi) = \cosh^{4 \lambda}(\xi/4),
\end{equation}
where
$$
\lambda =  \coth(k/2) = 1 + \frac{2 e^{-k}}{1 - e^{-k}}.
$$
The approximate solution of the rescaled Burgers equation (\ref{Burgers-rescaled-first})
is given by Lemma \ref{lemma-instant-solution}. It can be written in the
Cole--Hopf form (\ref{approx-solution-w-1}) with
\begin{equation}
\psi_{\rm app}(\xi,\tau) = \frac{1}{8} \left( 3 + 4 \cosh(\xi/2) e^{\tau/4} + \cosh(\xi) e^{\tau} \right).
\end{equation}
Note that $\psi_{{\rm app},0}(\xi) = \cosh^4(\xi/4)$.
Assumption (\ref{positivity-slope}) is satisfied by the direct computations.
Assumption (\ref{approx-solution-w-3}) follows from bound (\ref{asymptotic-solution}) of Lemma \ref{lemma-instant-solution} with
\begin{equation}
\label{C-k-exponential}
\exists C > 0 : \quad C_k = C k e^{-(k-\tau)}, \quad \tau \in [0,C_0 k],
\end{equation}
which is exponentially small as $k \to \infty$ for any $C_0 \in (0,1)$.

The initial condition,
$$
\Psi_0(\xi) = \cosh^{4 (\lambda - 1)}(\xi/4) - 1 = \exp\left(\frac{8 e^{-k}}{1 - e^{-k}}
\log\left[\cosh\left(\frac{\xi}{4}\right)\right]\right) - 1,
$$
implies that for sufficiently large $k \gg 1$, there is $C > 0$ such that
$$
\sup_{|\xi| \leq 2 k} |\Psi_0(\xi)| \leq C k e^{-k}, \quad
\| \Psi_0 \|_{L^2_{k,0}} \leq C k^{3/2} e^{-k}, \quad
\| \Psi_0' \|_{L^2_{k,0}} + \| \Psi_0'' \|_{L^2_{k,0}} \leq C k^{1/2} e^{-k}.
$$
Hence assumption (\ref{initial-estimate-small}) holds with
$D_k = C k^{3/2} e^{-k}$ for some $C > 0$. Because $D_k^2$ is much smaller than
$C_k$ for all $\tau \in [0,C_0 k]$, Lemma \ref{lemma-new-approximation} yields
\begin{equation}
\label{bound-w-main-new}
\| w - w_{\rm app} \|^2_{L^2_{k,\tau}} + \| \partial_{\xi} \left( w - w_{\rm app} \right)
\|^2_{L^2_{k,\tau}} \leq C k e^{-(k-\tau)}, \quad \tau \in [0,C_0 k].
\end{equation}

Applying Lemma \ref{lemma-solution-Burgers}, we write an approximate
solution of the Burgers equation (\ref{Burgers}) with initial data
(\ref{initial-data}) and (\ref{instant-function}) in the form,
\begin{equation}
\label{aprox-solution}
u_{\rm app}(x,t) = p(t) \left( 2x - w_{\rm app}(p(t) x,\tau) \right), \quad p(t) =
\frac{4 k}{1 + 16 k t}, \quad \tau = \frac{16 k^2 t}{1 + 16 k t},
\end{equation}
on the time interval
\begin{equation}
\label{time-interval}
0 \leq 16 k t \leq \frac{C_0}{1 - C_0}.
\end{equation}
Because $|p(t)| = \mathcal{O}(k)$
as $k \to \infty$ for any $t$ in the time interval (\ref{time-interval}),
bound (\ref{bound-w-main-new}) and Sobolev embedding of $H^1$ to $L^{\infty}$
imply that there are constants $C > 0$ and $\alpha > 0$ such that
\begin{eqnarray}
\label{aprox-solution-bound}
\sup_{x \in \mathbb{T}} \left| u(x,t) - u_{\rm app}(x,t) \right| \leq
C k \sup_{|\xi| \leq 2 (k - \tau)} \left| w(\xi,\tau) - w_{\rm app}(\xi,\tau) \right|
\leq C k^{3/2} e^{-\alpha k},
\end{eqnarray}
for any $t$ in the time interval (\ref{time-interval}). The error
bound (\ref{aprox-solution-bound}) is exponentially small as $k \to \infty$.

On the other hand, we can consider
\begin{equation}
\label{aprox-solution-new}
u_{\infty}(x,t) = p(t) \left( 2x - w_{\infty}(p(t) x) \right), \quad p(t) =
\frac{4 k}{1 + 16 k t},
\end{equation}
where $w_{\infty}(\xi) = \tanh(\xi)$. Bound (\ref{bound-asymptotic-solution})
in Corollary \ref{corollary-1} imply that there are $\delta > 0$ and $C > 0$ such that
\begin{equation}
\label{aprox-solution-bound-new}
\sup_{x \in \mathbb{T}} \left| u_{\rm app}(x,t) - u_{\infty}(x,t) \right| \leq
\frac{C}{k^{\delta}},
\end{equation}
in the inertial range
\begin{equation}
\label{time-interval-inertial}
\frac{C_{\infty}(k)}{1 - C_{\infty}(k)} \leq 16 k t \leq \frac{C_0}{1 - C_0},
\end{equation}
where
$$
C_{\infty}(k) = \frac{4 (1+\delta) \log(k)}{3 k} < C_0.
$$
We note that $C_{\infty}(k) \to 0$ as $k \to \infty$.

In the inertial range (\ref{time-interval-inertial}),
we can compute the leading order approximation of $E(u)$ and $K(u)$ from
the values of $E(u_{\infty})$ and $K(u_{\infty})$.
Using the representation (\ref{aprox-solution-new}), we obtain
\begin{equation}
\label{asympt-intermediate-range}
K(u_{\infty}) = \frac{1}{6} p^2(t) + \mathcal{O}(p(t)), \quad
E(u_{\infty}) = \frac{2}{3} p^3(t) + \mathcal{O}(p^2(t)),
\quad \mbox{\rm as} \quad p(t) \to \infty,
\end{equation}
On the other hand, $R(u)$ is approximated from $R(u_{\infty})$ by
\begin{equation}
\label{asympt-intermediate-range-R}
R(u_{\infty}) = -8 p^4(t) + {\cal O}(p^3(t)) \quad \mbox{\rm as} \quad p(t) \to \infty.
\end{equation}
Because $R(u_{\infty}) < 0$, the maximum of $E(u)$ occurs at the time $t = T_*
\leq \frac{C_{\infty}(k)}{16 k(1 - C_{\infty}(k))}$.
Moreover, it is clear that $E(u_{\infty})$ and hence $E(u)$ is
decreasing for all times $t \geq T_*$.

It remains to prove that $T_* = \mathcal{O}(k^{-2} \log(k))$ as $k \to \infty$ or, in other words,
that there exists $\tilde{C}_{\infty}(k) = \mathcal{O}(\log(k)/k) < C_{\infty}(k)$ such that
$T_*$ occurs inside
$$
\frac{\tilde{C}_{\infty}(k)}{16 k(1 - \tilde{C}_{\infty}(k))} \leq T_* \leq \frac{C_{\infty}(k)}{16 k(1 - C_{\infty}(k))}.
$$
If this is the case, then the scaling law (\ref{arg-max}) of Theorem \ref{theorem-time}
follows from the error bounds (\ref{aprox-solution-bound}) and (\ref{aprox-solution-bound-new}),
the triangle inequality, as well as from the previous computations:
$p(t) = \mathcal{O}(k)$ as $k \to \infty$ and $k = \mathcal{O}(\mathcal{E}^{1/3})$ as $\mathcal{E} \to \infty$.

To show that $T_* = \mathcal{O}(k^{-2} \log(k))$ as $k \to \infty$,
we compute $R(u_{\rm app})$ by using the explicit representation
(\ref{aprox-solution}) with $w_{\rm app}(\xi,\tau) =
\tanh(\xi) + \tilde{w}(\xi,\tau)$, where $\tilde{w}$ is given
by (\ref{approx-solution-w-2}).
Asymptotic computations yield
\begin{eqnarray*}
R(u_{\rm app}) & = & 2 p^5(t) \int_0^{p(t)/2} \left( w_{\rm apr, \xi}^3 - w_{\rm apr, \xi \xi}^2\right) d \xi
- 12 p^4(t) \int_0^{p(t)/2} w_{\rm apr, \xi}^2 d \xi + \mathcal{O}(p^3(t)) \\
& = & 2 N p^5(t) e^{-3\tau/4} - 8 p^4(t) + \mathcal{O}\left(e^{-3\tau/4} p^4(t), p^3(t)\right)
\end{eqnarray*}
as $e^{-3\tau/4} \to 0$ and $p(t) \to \infty$,
where $N$ is given by
\begin{eqnarray*}
N & = & \int_0^{\infty} \cosh(\xi/2)
\left( -28 {\rm sech}^3(\xi) + 139 {\rm sech}^5(\xi) - 120 {\rm sech}^7(\xi) \right) d \xi \\
& \phantom{t} & + \int_0^{\infty} \sinh(\xi/2) \sinh(\xi)
\left( 26 {\rm sech}^4(\xi) - 30 {\rm sech}^6(\xi) \right) d \xi.
\end{eqnarray*}
Numerical approximation of the integral shows that $N \approx 5.5189 > 0$.
(The positivity of $N$ also follows from the fact that $R$ is positive
for the approximate solution $w_{\rm app}$ shown on the right panel of Figure \ref{figure-2}.)
Therefore, $R(\tilde{u}_{\rm app}) = 0$ at the time $t = T_*$
(corresponding to $\tau = \tau_*$ by the transformation in (\ref{aprox-solution})),
when $p(T_*) = \mathcal{O}(e^{3\tau_*/4})$. Since $p(T_*) = \mathcal{O}(k)$ as $k \to \infty$
everywhere in (\ref{time-interval}), we have
$\tau_* = \mathcal{O}(\log(k)) \ll C_0 k$ or
$T_* = \mathcal{O}(k^{-2} \log(k))$ as $k \to \infty$.
The proof of Theorem \ref{theorem-time} for $l = k$ is now complete.

\section{Proof of Theorem \ref{theorem-time}: Case $l = \mathcal{O}(\log(k))$ as $k \to \infty$.}

We shall now consider the initial data (\ref{initial-data}) and (\ref{particular-function}).
By Lemma \ref{lemma-exp-asymptotics}, we fix $\Delta > 0$ and set
\begin{equation}
\label{choice-l}
l := (1 + \Delta) \log(k).
\end{equation}
This choice represents a more general case $l = \mathcal{O}(\log(k))$ as $k \to \infty$.

By Lemma \ref{lemma-solution-Burgers}, a solution of the Burgers equation (\ref{Burgers})
is written in the form
\begin{equation}
u(x,t) = p(t) \left( 2x - w(p(t) x,\tau) \right), \quad p(t) =
\frac{4 k}{1 + 16 k t}, \quad \tau = \frac{16 k^2 t}{1 + 16 k t},
\end{equation}
where $w$ solves the rescaled Burgers equation (\ref{Burgers-rescaled-first})
with the initial data
\begin{equation}
w_0(\xi) = \lambda \tanh(\xi/a) \quad \Rightarrow \quad
\psi_0(\xi) = \cosh^{\lambda a}(\xi/a),
\end{equation}
where $a = \frac{4k}{l}$ and $\lambda = \coth(l/2)$. The approximate solution
of the rescaled Burgers equation (\ref{Burgers-rescaled-first})
is given by Lemma \ref{lemma-Burgers-solution}.
It can be written in the form (\ref{approx-solution-w-1}) with
\begin{equation}
\psi_{\rm app} = \frac{1}{2^{a-1}} e^{\tau} \cosh(\xi)
\left[ 1 + \frac{e^{\xi} \psi_+(\xi,\tau)
+ e^{-\xi} \psi_-(\xi,\tau)}{\sqrt{\pi} (e^{\xi} + e^{-\xi})} \right],
\end{equation}
where $\psi_{\pm}$ are defined by (\ref{expression-psi-plus})--(\ref{expression-psi-minus}). Note
that $\psi_{{\rm app},0}(\xi) = \cosh^a(\xi/a)$. Assumption (\ref{positivity-slope})
is satisfied by the monotonicity of the transition from $w_{{\rm app},0}(\xi) = \tanh(\xi/a)$
to $w_{\infty}(\xi) = \tanh(\xi)$. Assumption (\ref{approx-solution-w-3}) follows from
bound (\ref{main-bound-line-infty-boundary}) with
\begin{equation}
\label{C-k-algebraic}
\exists C > 0 : \quad C_k = \frac{C k}{a^{1+\delta}} = \frac{C \log^{1+\delta}(k)}{k^{\delta}}, \quad
\tau \in [0,C_0 k],
\end{equation}
as long as
\begin{equation}
\label{C-k-constraint}
2(k-\tau) \geq \frac{1}{2} (1 + \delta)^2 a \log(a), \quad \tau \in [0,C_0 k].
\end{equation}
Constant $C_k$ is algebraically small as $k \to \infty$, whereas
the constraint (\ref{C-k-constraint}) yields an upper bound on $C_0$,
\begin{equation}
\label{C-0-constraint}
C_0 \leq C_0^* := 1 - \frac{1}{4 k} (1 + \delta)^2 a \log(a) = 1 - \frac{(1+\delta)^2}{(1 + \Delta)}
\left[ 1 - \frac{\log(l/4)}{\log(k)} \right].
\end{equation}
To ensure that $C_0^* > 0$, we require $(1 + \Delta) > (1 + \delta)^2$.

The initial condition,
$$
\Psi_0(\xi) = \cosh^{a(\lambda - 1)}(\xi/a) - 1 = \exp\left(\frac{2 a e^{-l}}{1 - e^{-l}}
\log\left[\cosh\left(\frac{\xi}{a}\right)\right]\right) - 1,
$$
implies that for sufficiently large $k \gg 1$, there is $C > 0$ such that
$$
\sup_{|\xi| \leq 2 k} |\Psi_0(\xi)| \leq C k e^{-l} = \frac{C}{k^{\Delta}}, \quad
\| \Psi_0 \|_{L^2_{k,0}} \leq \frac{C}{k^{\Delta-1/2}}, \quad
\| \Psi_0' \|_{L^2_{k,0}} + \| \Psi_0'' \|_{L^2_{k,0}} \leq \frac{C}{k^{\Delta+1/2}},
$$
hence the assumption (\ref{initial-estimate-small}) is satisfied with
$D_k = \frac{C}{k^{\Delta-1/2}}$ for some $C > 0$. Constant
$D_k$ is algebraically small as $k \to \infty$ provided $\Delta > \frac{1}{2}$.
Lemma \ref{lemma-new-approximation} yields
\begin{equation}
\label{bound-w-main-new-yet}
\| w - w_{\rm app} \|^2_{L^2_{k,\tau}} + \| \partial_{\xi}(w - w_{\rm app}) \|^2_{L^2_{k,\tau}} \leq
C \left( \frac{1}{k^{2 \Delta - 1}} + \frac{\log^{1+\delta}(k)}{k^{\delta}}\right),
\quad \tau \in [0,C_0 k].
\end{equation}

Applying Lemma \ref{lemma-solution-Burgers}, we write an approximate
solution of the Burgers equation (\ref{Burgers}) with initial data
(\ref{initial-data}) and (\ref{particular-function}) in the form,
\begin{equation}
\label{aprox-solution-last}
u_{\rm app}(x,t) = p(t) \left( 2x - w_{\rm app}(p(t) x,\tau) \right), \quad p(t) =
\frac{4 k}{1 + 16 k t}, \quad \tau = \frac{16 k^2 t}{1 + 16 k t},
\end{equation}
on the time interval
\begin{equation}
\label{time-interval-last}
0 \leq 16 k t \leq \frac{C_0}{1 - C_0}.
\end{equation}
Because $|p(t)| = \mathcal{O}(k)$
as $k \to \infty$ for any $t$ in the time interval (\ref{time-interval-last}),
bound (\ref{bound-w-main-new-yet}) and Sobolev embedding of $H^1$ to $L^{\infty}$
imply that there is constant $C > 0$ such that
\begin{eqnarray}
\nonumber
\sup_{x \in \mathbb{T}} \left| u(x,t) - u_{\rm app}(x,t) \right| & \leq &
C k \sup_{|\xi| \leq 2 (k - \tau)} \left| w(\xi,\tau) - w_{\rm app}(\xi,\tau) \right| \\
\label{aprox-solution-bound-last}
& \leq & C \left( \frac{1}{k^{\Delta - 3/2}} + \frac{\log^{(1+\delta)/2}(k)}{k^{\delta/2-1}}\right),
\end{eqnarray}
for any $t$ in the time interval (\ref{time-interval-last}). The error
bound (\ref{aprox-solution-bound-last}) is algebraically small as $k \to \infty$
provided $\Delta > \frac{3}{2}$ and $\delta > 2$, instead of our previous constraint $\Delta > \frac{1}{2}$.

On the other hand, we can consider
\begin{equation}
\label{aprox-solution-last-new}
u_{\infty}(x,t) = p(t) \left( 2x - w_{\infty}(p(t) x) \right), \quad p(t) =
\frac{4 k}{1 + 16 k t},
\end{equation}
where $w_{\infty}(\xi) = \tanh(\xi)$. Bounds (\ref{bound-asymptotic-solution-other-case})
and (\ref{definition-T-star}) in Corollary \ref{corollary-2} imply that
there is a positive constant $C$ such that
\begin{equation}
\label{aprox-solution-bound-last-new}
\sup_{x \in \mathbb{T}} \left| u_{\rm app}(x,t) - u_{\infty}(x,t) \right| \leq
\frac{C k}{a^{3 \delta + \delta^2+ 1/2} \log^{1/2}(a)} \leq
\frac{C \log^{3 \delta + \delta^2}(k)}{k^{3 \delta + \delta^2 - 1/2}},
\end{equation}
in the inertial range
\begin{equation}
\label{time-interval-inertial-last}
\frac{C_{\infty}}{1 - C_{\infty}} \leq 16 k t \leq \frac{C_0}{1 - C_0},
\end{equation}
where
$$
C_{\infty} = \frac{2 (1+\delta)^2}{(1+\Delta)} < C_0.
$$
Because of constraint (\ref{C-0-constraint}) on $C_0$, we realize that
$(1 + \Delta) > 3(1 + \delta)^2$ instead of our previous constraint $(1 + \Delta) > (1 + \delta)^2$.

In the inertial range (\ref{time-interval-inertial-last}),
the computations of $R(u_{\infty})$, $E(u_{\infty})$ and $K(u_{\infty})$
are identical to those in Section 6 and yield expressions (\ref{asympt-intermediate-range})
and (\ref{asympt-intermediate-range-R}).
In particular, $E(u_{\infty}) = \mathcal{O}(k^3)$ and $K(u_{\infty}) = \mathcal{O}(k^2)$
as $k \to \infty$. Modifications of the previous argument show
that the maximum of $E(u)$ at the time $t = T_*$
occurs for $T_* = \mathcal{O}(k^{-1})$ as $k \to \infty$.
By Lemma \ref{lemma-exp-asymptotics}, we have $k = \mathcal{O}(\mathcal{E}^{1/2} \log^{-1/2}(\mathcal{E}))$.
Substituting this into the previous expressions yields the scaling law
(\ref{arg-max-theorem}) of Theorem \ref{theorem-time} for $l = (1 + \Delta) \log(k)$.
The constraints on $\Delta$ and $\delta$ are consistent if
$$
\Delta > \frac{3}{2},  \quad \delta > 2, \quad \mbox{\rm and} \quad
(1 + \Delta) > 3(1 + \delta)^2,
$$
which can be satisfied, for instance, by the choice $\Delta = 48$ and $\delta = 3$.

\end{document}